\documentclass[11pt,reqno]{amsart}
\usepackage{amsmath,amssymb,amsthm,mathtools}
\usepackage[a4paper,margin=1in]{geometry}
\usepackage{microtype,xurl,needspace}
\usepackage[hidelinks]{hyperref}
\numberwithin{equation}{section}
\newtheorem{theorem}{Theorem}[section]
\newtheorem{lemma}[theorem]{Lemma}
\newtheorem{proposition}[theorem]{Proposition}

\newtheorem{example}[theorem]{Example}
\theoremstyle{remark}
\newtheorem{remark}[theorem]{Remark}
\newcommand{\dV}{\,\mathrm dV_g}
\newcommand{\Div}{\operatorname{div}_g}
\newcommand{\G}{\mathcal G}
\newcommand{\F}{\mathcal F_\rho}
\newcommand{\D}{\mathcal D}
\newcommand{\Hh}{\mathcal H}
\newcommand{\Kk}{\mathcal K}
\newcommand{\Rr}{\mathcal R}
\newcommand{\norm}[1]{\left\|#1\right\|}
\title[Subcritical heat flow for Liouville systems]{Global existence and convergence of a density heat flow for strictly subcritical Liouville systems}
\author{Zetao Cheng}
\address{Department of Mathematics and Research Institute for Natural Sciences, College of Natural Sciences, Hanyang University, 222 Wangsimni-ro Seongdong-gu, Seoul 04763, Republic of Korea}
\email{chengzt20@hanyang.ac.kr}
\author{Haoyu Li}
\address{Department of Mathematics, Faculty of Science and Technology, University of Macau, Taipa, Macau, China}
\email{hyli1994@hotmail.com}
\author{Woongbae Park}
\address{Department of Mathematics Education, Korea National University of Education, Cheongju-si, Chungbuk, South Korea}
\email{ubmpark5@knue.ac.kr}
\author{Lei Zhang}
\address{Department of Mathematics, University of Florida, Gainesville, FL 32611-8105, USA}
\email{leizhang@ufl.edu}
\date{}
\usepackage{xcolor}
\begin{document}
\begin{abstract}
We study a density heat flow associated with normalized Liouville
systems on a closed connected Riemannian surface, and we show that this flow leads to the existence of solutions for the corresponding subcritical Liouville systems. The coupling matrix is symmetric and entrywise nonnegative, with no invertibility, irreducibility or inverse-matrix sign assumption. Under the strict subcriticality conditions $8\pi\sum_{i\in J}\rho_i>\sum_{i,j\in J}a_{ij}\rho_i\rho_j$ for every nonempty set of components $J$, every smooth positive initial density of masses $\rho_i>0$ generates a unique global smooth positive solution converging in every $C^m$ norm to a stationary density. The limiting potentials solve the stationary Liouville system. Our proof uses entropy coercivity and a gradient inequality in the density variables, which also yields algebraic or exponential convergence rates.
\end{abstract}
\maketitle

\section{Introduction}
Let $(M,g)$ be a smooth closed connected Riemannian surface of area one. For $I=\{1,\ldots,n\}$ with $n\ge1$, smooth positive weights $h_i$ and parameters $\rho_i>0$, consider the normalized Liouville system
\[
 {\Delta_g u_i+\sum_{j\in I}a_{ij}\rho_j
 \left(\frac{h_j e^{u_j}}{\int_M h_j e^{u_j}\dV}-1\right)=0}{\text{ in }M}{,
 \qquad \int_M u_i\dV=0,\qquad i\in I.}
\]
Here $A=(a_{ij})$ is the coupling matrix. We write $dV_g$ for the
Riemannian area element, $\nabla_g$ for the gradient with respect to the metric $g$ and $\Div$ for the  divergence, with $\Delta_g=\Div\nabla_g$. For a smooth function $f$ and a vector field $X$, these conventions mean $g(\nabla_g f,X)=df(X)$ and
$\int_M f\Div X\dV=-\int_M g(\nabla_g f,X)\dV$. Integrals without a specified domain are over $M$ with respect to $dV_g$.
The associated densities
\[
 c_i=\frac{\rho_i h_i e^{u_i}}{\int_M h_i e^{u_i}\dV}
 \qquad\text{satisfy}\qquad \int_M c_i\dV=\rho_i.
\]
They allow us to construct a mass-preserving flow whose positive stationary states recover the Liouville system.

We write $d_g$ for the geodesic distance and $B_r(x)$ for the geodesic ball with radius $r$ and centered at $x\in M$. Let $G$ be the symmetric Green function normalized by
\begin{equation}\label{eq:green}
-\Delta_{g,x}G(x,y)=\delta_y-1,\qquad \int_M G(x,y)\dV(x)=0.
\end{equation}
The subscript $x$ means that the Laplace--Beltrami operator acts on the first variable. The equation holds in the sense of distributions, with $\delta_y$ the unit Dirac mass at $y$.
The area normalization makes the right-hand side have integral zero. For densities of masses $\rho_i$, we define
\begin{equation}\label{eq:potential}
U_i[c]=\sum_{j=1}^n a_{ij}\G(c_j-\rho_j),\qquad
\G f(x)=\int_M G(x,y)f(y)\dV(y).
\end{equation}
Then
\begin{equation}\label{eq:poisson}
-\Delta_g U_i[c]=\sum_j a_{ij}(c_j-\rho_j),\qquad \int_M U_i[c]\dV=0.
\end{equation}
In follows, we refer (\ref{eq:poisson}) as zero-mean Liouville system. For a zero-mean Liouville solution, uniqueness of the zero-mean Poisson solution gives $u_i=U_i[c]$. Thus $\log(c_i/h_i)-U_i[c]$ is constant on $M$. This relation leads to the density heat flow
\begin{equation}\label{eq:flow}
{\partial_t c_i=\Div\left(c_i\nabla_g\left(\log\frac{c_i}{h_i}-U_i[c]\right)\right)}{\text{ for }t>0,\,x\in M\text{ and }
i\in I.}
\end{equation}
Integration over the closed surface preserves each mass. Lemma~\ref{lem:stationary} proves the converse correspondence between positive stationary densities and zero-mean Liouville solutions. We prove global existence and convergence of this flow in the strictly
subcritical region defined later.

The coupling matrix satisfies
\begin{equation}\label{hyp:A}
A=A^T,\qquad a_{ij}\ge0\quad(i,j\in I).
\end{equation}
For a nonempty $J\subseteq I$, the subsystem indexed by $J$ has coupling matrix $A_J=(a_{ij})_{i,j\in J}$ and mass vector $(\rho_i)_{i\in J}$. This restriction of indices need not decouple from the remaining components. Its strict subcriticality is expressed by $\Lambda_J(\rho)>0$, where
\begin{equation}\label{hyp:subcritical}
\Lambda_J(\rho)=8\pi\sum_{i\in J}\rho_i-\sum_{i,j\in J}a_{ij}\rho_i\rho_j.
\end{equation}
We assume $\Lambda_J(\rho)>0$ for all such $J$, including $J=I$.

The convergence and spatial regularity in the main theorem are measured by $\|f\|_{C^m}=\sum_{j=0}^m\|\nabla_g^j f\|_\infty$. For $0<\alpha<1$, we set $\|f\|_{C^{m,\alpha}}=\|f\|_{C^m}+[\nabla_g^m f]_\alpha$. The seminorm compares tensors by parallel transport along the unique minimizing geodesic for $0<d_g(x,y)<r_0$, where ${0<r_0}{<\operatorname{inj}(M,g)}$ is fixed. Here $\operatorname{inj}(M,g)>0$ denotes the injectivity radius of $(M,g)$. We set
\[ [T]_\alpha=\sup_{0<d_g(x,y)<r_0}
 \frac{|T(x)-\mathsf P_{y\to x}T(y)|_g}{d_g(x,y)^\alpha}.
\]
These norms are equivalent to the norms defined using a fixed finite atlas. Here $m$ is a nonnegative integer, $\|\cdot\|_\infty$ is the essential supremum and $\nabla_g^j$ denotes the $j$th covariant derivative. For vectors, these norms are the maximum of the component norms.

\begin{theorem}\label{thm:main}
Suppose that \eqref{hyp:A} holds and $\Lambda_J(\rho)>0$ for every nonempty $J\subseteq I$. For any $c_{i,0}\in C^\infty(M)$ with $c_{i,0}>0$ and $\int_M c_{i,0}\dV=\rho_i$, system \eqref{eq:flow} with $c_i(0)=c_{i,0}$ has a unique global smooth positive solution preserving each mass. Moreover, there exists a smooth positive stationary state $c^\infty$ such that, for every integer $m\ge0$,
\begin{equation}\label{eq:whole}
\lim_{t\to\infty}\norm{c(t)-c^\infty}_{C^m(M)^n}=0.
\end{equation}
For every $\tau>0$, integer $m\ge0$ and $0<\alpha<1$, we get
\begin{equation}\label{eq:high}
\begin{split}
&\sup_{t\ge\tau}\sum_i\left(\norm{c_i(t)}_{C^{m,\alpha}}+\norm{U_i[c(t)]}_{C^{m+2,\alpha}}\right)<\infty,\\
&\inf_{t\ge\tau,\,x\in M,\,i\in I}c_i(t,x)>0.
\end{split}
\end{equation}
The zero-mean functions $u_i^\infty=U_i[c^\infty]$ satisfy
\begin{equation}\label{eq:elliptic}
\Delta_g u_i^\infty+\sum_j a_{ij}\rho_j
\left(\frac{h_j e^{u_j^\infty}}{\int_M h_j e^{u_j^\infty}\dV}-1\right)=0\text{ in }M\text{ for }i\in I.
\end{equation}
\end{theorem}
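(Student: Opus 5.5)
The plan is to treat \eqref{eq:flow} as the Wasserstein-type gradient flow of the free energy
\[
\F(c)=\sum_i\int_M c_i\log\frac{c_i}{h_i}\dV-\frac12\sum_{i,j}a_{ij}\int_M (c_i-\rho_i)\,\G(c_j-\rho_j)\dV,
\]
and to combine a Lyapunov identity, a coercivity estimate coming from the subcriticality, and a Łojasiewicz-type gradient inequality. Since $A$ is symmetric and $\G$ is self-adjoint, differentiating along smooth positive solutions gives the dissipation identity
\[
\frac{d}{dt}\F(c)=-\D(c),\qquad \D(c)=\sum_i\int_M c_i\Big|\nabla_g\Big(\log\frac{c_i}{h_i}-U_i[c]\Big)\Big|_g^2\dV .
\]
Each mass is preserved by the divergence form of the equation, and positivity is preserved by the maximum principle applied to $\log c_i$.

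\textbf{Local theory.} Local well-posedness and uniqueness come from writing the equation as $\partial_t c_i=\Delta_g c_i-\Div(c_i\nabla_g(\log h_i+U_i[c]))$. This is a semilinear parabolic system, since $U_i[c]$ gains two derivatives through \eqref{eq:poisson}. A contraction in $C([0,T];C^{0,\alpha})$ with parabolic Schauder bootstrapping gives a smooth local solution, and Gronwall in $L^2$ gives uniqueness. Global existence then follows once $\|c(t)\|_{L^\infty}$, or any $L^p$ norm with $p>1$, is bounded on finite intervals.

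\textbf{Coercivity, the main obstacle.} The key step, and the one I expect to be hardest, is to show that $\Lambda_J(\rho)>0$ for all $J$ implies
\[
\sum_i\int_M c_i\log c_i\dV\le C\big(1+\F(c)\big).
\]
I would deduce this from the Liouville-system Moser--Trudinger inequality of Chipot--Shafrir--Wolansky type in the form due to Shafrir--Wolansky. That inequality states that $\F$ is bounded below on a closed surface exactly when $\Lambda_J\ge0$ for all $J$, with $A$ only symmetric and nonnegative. Strictness gives room to perturb: choose $\varepsilon>0$ so that the masses $(1+\varepsilon)\rho$ are still strictly subcritical. Applying the lower bound to the scaled interaction term then absorbs $\varepsilon\sum_i\int c_i\log c_i$.

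If that result is not already available in the needed generality, I would prove it by concentration-compactness. For a minimizing sequence of $\F$ with entropy blowing up, extract the blow-up masses $m_i\le\rho_i$ at a point and use the local Pohozaev-type constraint $8\pi\sum_{J}m_i=\sum_{J}a_{ij}m_im_j$. This contradicts $\Lambda_J>0$ on the concentrating set $J$, because for nonnegative $a_{ij}$ the quadratic form is monotone in the masses.

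\textbf{A priori bounds and compactness.} Since $\F(c(t))\le\F(c_0)$, the entropy bound gives $\sup_t\int c_i\log c_i<\infty$. Hence $U_i[c]$ is bounded in $W^{1,q}$ for every $q<2$, and by Brezis--Merle $e^{\gamma|U_i|}$ is integrable for some $\gamma>0$. An $L^p$ energy estimate,
\[
\frac{d}{dt}\int_M c_i^p\dV\le -\delta\int_M|\nabla_g c_i^{p/2}|^2\dV+\int_M c_i^p\sum_j a_{ij}c_j\dV+\dots,
\]
then closes. The dangerous cubic term is handled with the Gagliardo--Nirenberg inequality together with the entropy bound, splitting $c_j=\min(c_j,K)+(c_j-K)_+$ and making the tail small in $L\log L$. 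This gives uniform $L^p$ bounds for $t\ge\tau$. A Moser iteration and parabolic Schauder estimates then yield the $C^{m,\alpha}$ bounds in \eqref{eq:high} for $c_i$ and, through \eqref{eq:poisson}, for $U_i[c]$. A Harnack inequality for the linear equation with bounded smooth drift gives the uniform positive lower bound.

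\textbf{Convergence.} Integrating the dissipation gives $\int_0^\infty\D\,dt<\infty$, so the $\omega$-limit set is nonempty and compact in every $C^m$. It consists of stationary densities with $\D=0$, hence $\log(c_i/h_i)-U_i[c]$ is constant. These correspond by Lemma~\ref{lem:stationary} to solutions of \eqref{eq:elliptic}. To upgrade to convergence of the whole trajectory, I would prove a Łojasiewicz--Simon inequality near a limit $c^\infty$:
\[
|\F(c)-\F(c^\infty)|^{1-\theta}\le C\,\D(c)^{1/2}.
\]
Since $c$ stays in a $C^{2,\alpha}$-bounded set bounded away from zero, $\D$ is comparable to the squared $H^{-1}$-type norm of the first variation in density variables. $\F$ is analytic in these variables on the affine mass-constrained space, and its Hessian is Fredholm, so the standard Simon argument applies. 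Then $\frac{d}{dt}(\F-\F_\infty)^\theta\le -c\,\|\partial_tc\|_{\dot H^{-1}}$, which gives finite length in $H^{-1}$. Interpolating with the uniform $C^{m+1}$ bounds yields \eqref{eq:whole}. The rates follow: they are exponential when $\theta=1/2$, that is when the limit is nondegenerate, and algebraic otherwise.
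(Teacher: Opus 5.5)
Your proposal is correct and follows essentially the paper's route. The steps match: the dissipation identity; entropy coercivity from the Shafrir--Wolansky inequality with room to spare (scaling the masses by $1+\varepsilon$ is equivalent to the paper's scaling $A/(1-\varepsilon)$); $L^p$ bounds by absorbing the cubic drift term through Gagliardo--Nirenberg and entropy non-concentration; uniform regularity and a positive lower bound; and a \L{}ojasiewicz--Simon inequality in the density variables, built from the Fredholm Hessian, which yields finite $H^{-1}$ length and convergence by interpolation. Where you differ (truncating at a level $K$ instead of localizing on small balls of small mass, and a parabolic Harnack inequality instead of compactness plus the strong maximum principle) are interchangeable standard variants.

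Three points need fixing:
\begin{itemize}
\item Make explicit the first-exit argument (Step~2 of Proposition~\ref{prop:whole}). It uses the $H^{-1}$ length bound and interpolation to keep the trajectory inside the neighborhood where the gradient inequality holds, and without it the finite-length claim is circular.
\item Your fallback concentration-compactness sketch would fail as written, since a minimizing sequence satisfies no Pohozaev identity. It is also unnecessary, because the Shafrir--Wolansky result already covers this setting.
\item The uniform-in-time $L^p$ bound needs one more step. Use mass conservation and Gagliardo--Nirenberg to make the dissipation control a superlinear power of $\int c_i^p$, as in Step~3 of Proposition~\ref{prop:Lp}.
\end{itemize}
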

\begin{remark}
One should note the following details.
\begin{itemize}
    \item [$(a)$] The limit $u_i^\infty$ may depend on the initial data. Uniqueness in the theorem concerns the evolution with prescribed initial data.
    \item [$(b)$] Since $u_i^\infty$ is smooth on the compact surface, each component has finite Dirichlet energy and a finite positive exponential integral. More precisely, we get
 $$\int_M|\nabla_g u_i^\infty|_g^2\dV<\infty\text{ and }0<(\min_M h_i)e^{\min_M u_i^\infty}
 \le\int_M h_i e^{u_i^\infty}\dV \le(\max_M h_i)e^{\max_M u_i^\infty}<\infty.$$
Thus the densities $\rho_i h_i e^{u_i^\infty}/\int_M h_i e^{u_i^\infty}\dV$ are well defined and have masses $\rho_i$.
\end{itemize}

\end{remark}

The coefficient assumptions can be compared with those of Lin and Zhang. Their entire-solution uniqueness and local bubbling estimates \cite[(1.2) and Theorems 1.1--1.2]{LZprofile} use
\[
 (H_1)\qquad A\text{ is symmetric, entrywise nonnegative, irreducible and invertible}.
\]
For a symmetric matrix, irreducibility means that no nonempty proper $J\subset I$ satisfies $a_{ij}=0$ for all $i\in J$ and $j\notin J$. Their topological degree results \cite[pp.~558--559]{LZdegree} also assume, with $A^{-1}=(a^{ij})$,
\[
 (H_2)\qquad a^{ii}\le0,\qquad a^{ij}\ge0\ (i\ne j), \qquad \sum_{j\in I}a^{ij}\ge0\quad(i\in I).
\]
Their one-bubble analysis at critical parameters \cite[p.~2587]{LZone} assumes $(H_1)$ alone. Theorem~\ref{thm:main} requires only symmetry and entrywise nonnegativity of $A$, together with strict subcriticality for every nonempty subsystem. It therefore includes singular and reducible matrices as well as indefinite ones. The proof works with the density free energy and never inverts $A$. These assumptions yield global existence and convergence of the density heat flow at fixed strictly subcritical masses, whereas the cited results concern elliptic bubbling and topological degree. For the matrices considered here, the full family $\Lambda_J(\rho)>0$ cannot be replaced by $\Lambda_I(\rho)>0$ alone, as the example in Section~\ref{s:Complementary} shows.

On $\mathbb R^2$, Karmakar and Wolansky \cite[Definition 1.1 and Theorem 1.3]{KW} proved global existence and uniqueness of weak solutions for symmetric nonnegative coupling under the same strict subsystem conditions, starting from densities with finite entropy and second moment. Their solutions satisfy a free-energy dissipation inequality. Theorem~\ref{thm:main} concerns a closed surface with smooth positive weights and proves uniform regularity, strict positivity and convergence of the whole trajectory. Its entropy estimate uses the system logarithmic Hardy--Littlewood--Sobolev inequality of Shafrir and Wolansky \cite{SW}.

For scalar equations on a flat torus, Choi, Jeong and Seo \cite[Theorems 1.2 and 6.2]{CJS} proved smooth convergence of uniformly bounded solutions under their assumptions on an analytic external potential and the interaction kernel, including a Keller--Segel
application. Here we treat several coupled densities on a general closed connected surface with smooth positive weights. The strict subsystem conditions provide the uniform bounds needed for convergence. We establish the gradient inequality directly in the density variable, using the local analytic expansion of the logarithm around a positive stationary density. The coefficients of this power series are smooth functions on $M$. The analyticity concerns the density variable in a H\"older space. To state the quantitative convergence in Sobolev norms, we use the spectral definition. If $-\Delta_g e_\ell=\lambda_\ell e_\ell$ is an orthonormal eigenbasis, then for $s\in\mathbb R$
\[ \|f\|_{H^s}^2=\sum_\ell(1+\lambda_\ell)^s |\langle f,e_\ell\rangle|^2,\qquad H^{-1}(M)=(H^1(M))^*. \]
For vectors, $L^2$, $H^s$ and $H^{-1}$ use the Hilbert product norms.

The supplementary Theorem~\ref{thm:rates} gives rates for the energy, the $H^{-1}$ distance and the mass-weighted Wasserstein distance. Interpolation and elliptic regularity give corresponding rates for the densities and potentials in higher spatial norms.

For the transport estimate, we normalize each component to a probability measure. Let $\mathcal P(M)$ denote the Borel probability measures on $M$. For $\mu,\nu\in\mathcal P(M)$, a coupling is a probability measure $\pi\in\mathcal P(M\times M)$ whose first and second marginals are $\mu$ and $\nu$. Explicitly, this means that
\[
 \pi(E\times M)=\mu(E),\qquad \pi(M\times E)=\nu(E)
 \quad\text{for every Borel set }E\subseteq M.
\]
Writing $\Pi(\mu,\nu)$ for all such couplings, we define
\[
 W_2(\mu,\nu)^2=\inf_{\pi\in\Pi(\mu,\nu)}
 \int_{M\times M}d_g(x,y)^2\,d\pi(x,y).
\]
If $X$ is a measurable map, its pushforward is $X_\#\mu(E)=\mu(X^{-1}(E))$ for Borel sets $E$.
The measures $p_i\,dV_g$, where $p_i=c_i/\rho_i$, are probabilities. On densities with component masses $\rho_i$, we use 
\begin{equation}\label{eq:metric}
d_\rho(c,\widetilde c)^2
=\sum_i\rho_i W_2^2\left(\frac{c_i}{\rho_i}\,\mathrm dV_g,
\frac{\widetilde c_i}{\rho_i}\,\mathrm dV_g\right).
\end{equation}
For the definition of $W_2$ and the continuity-equation estimates used below, see \cite[Sections~2.2 and~2.5]{AS}. Remark~\ref{rem:metric} explains the factors $\rho_i$
in \eqref{eq:metric}.

The estimates leading to global existence use the spatial norms, for $k\ge0$ an integer,
\[
 \|f\|_q=\left(\int_M|f|^q\dV\right)^{1/q}\quad(1\le q<\infty),
 \qquad \|f\|_\infty=\operatorname*{ess\,sup}_{M}|f|,
 \qquad \|f\|_{W^{k,q}}=\sum_{j=0}^k\|\nabla_g^j f\|_q.
\]
These norms extend to covariant tensors using the pointwise metric norm. For vectors, $L^2$ uses the Hilbert product norm and the other $L^q$ and $W^{k,q}$ norms use the maximum of the component norms.

Throughout this paper, the constants may depend on the fixed data $(M,g),A,\rho,h,c_0$ and on specified regularity indices or positive times. The mass vector $\rho$ is fixed in the strictly subcritical region. Constants may diverge as $\rho$ approaches the boundary of the set $\{\rho\in\mathbb{R}_+^n|\Lambda_J(\rho)>0\mbox{ for any }J\subset I\}$. The letter $C$ may change from line to line unless explicitly fixed.

The energy bound alone gives no pointwise control of the densities. We first use entropy coercivity to make the mass on small balls uniformly small and then absorb the coupling terms in localized $L^p$ estimates. The resulting uniform regularity gives a positive stationary subsequential limit. A first-exit argument keeps the trajectory in the neighborhood where the gradient inequality applies and yields convergence of the whole trajectory.

Section~\ref{sec:energy} gives the free energy and characterizes stationary states. Section~\ref{sec:local} proves local existence and continuation. Section~\ref{sec:global} establishes the uniform bounds and global existence. Section~\ref{sec:convergence} proves whole-trajectory convergence. Section~\ref{s:Complementary} provides the supplementary rate and explains the role of the subsystem conditions.

\section{Free energy and stationary states}\label{sec:energy}
The Green function has the form
\begin{equation}\label{eq:singularity}
G(x,y)=-\frac1{2\pi}\log d_g(x,y)+R(x,y),\qquad \norm R_\infty<\infty.
\end{equation}
We use the boundedness of $R$ in the interaction estimates below. For nonnegative densities of finite entropy and prescribed masses, we define \emph{the free-energy functional}
\begin{equation}\label{eq:energy}
\begin{aligned}
\F(c)
&:=\overbrace{H(c)}^{\text{Boltzmann entropy}}
+\overbrace{\bigl(-W_A(c)\bigr)}^{\text{interaction energy}}
+\overbrace{\left(-\sum_i\int c_i\log(\rho_i h_i)\dV\right)}
^{\text{external-potential energy}},\\
H(c)
&=\sum_i\int c_i\log c_i\dV,\\
W_A(c)
&=\frac12\sum_{i,j}a_{ij}
\int(c_i-\rho_i)\G(c_j-\rho_j)\dV.
\end{aligned}
\end{equation}
We use the convention $0\log0=0$.

\begin{lemma}\label{lem:energy-finite}
For every $i\in I$, suppose that $c_i\ge0$ has mass $\rho_i$ and finite entropy $\int_M c_i\log c_i\dV$. Then the integrals defining the interaction energy $-W_A(c)$ in \eqref{eq:energy} are absolutely convergent, and $\F(c)$ is a finite real number.
\end{lemma}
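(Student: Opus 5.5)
The plan is to show absolute convergence of $\iint |G(x,y)|\,(c_i(x)+\rho_i)(c_j(y)+\rho_j)\,dV\,dV$ for every pair $i,j$. Since $\int (c_i-\rho_i)\G(c_j-\rho_j)\dV$ is a bilinear combination of such double integrals, Fubini then makes each interaction term absolutely convergent. The external-potential term is bounded by $\|\log(\rho_i h_i)\|_\infty\rho_i$, because $h_i$ is smooth and positive. The entropy $\int c_i\log c_i$ is finite by assumption; note that $c_i\log c_i\ge -e^{-1}$ and $M$ has area one, so the negative part is always integrable and the hypothesis really concerns the positive part. Once all three pieces are finite, $\F(c)$ is a finite real number.

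For the interaction term I use \eqref{eq:singularity}, which gives $|G(x,y)|\le \frac1{2\pi}|\log d_g(x,y)|+\|R\|_\infty$. Since $d_g\le \operatorname{diam}M$, this becomes $|G(x,y)|\le \frac1{2\pi}\log^+\frac1{d_g(x,y)}+C$. The bounded part contributes at most $C(2\rho_i)(2\rho_j)$. For the logarithmic part, the terms involving the constant densities $\rho_i$ or $\rho_j$ are handled by
\[
\sup_x\int_M \log^+\frac1{d_g(x,y)}\dV(y)<\infty,
\]
which follows in geodesic normal coordinates near $x$ because $\int_0^{r_0}|\log r|\,r\,dr<\infty$, with the area comparison uniform by compactness.

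The main step is the term $\iint \log^+\frac1{d_g(x,y)}\,c_i(x)c_j(y)$. Here I would apply Young's inequality in the form $ab\le e^{a}+b\log b$ for $a\ge 0$ and $b\ge 0$ (this is the Legendre duality $ab\le e^{a-1}+b\log b$, which gives the stated bound since $e^{a-1}\le e^{a}$). Fix $x$ and take $a=\beta\log^+\frac1{d_g(x,y)}$ and $b=c_j(y)/\beta$ with some $\beta\in(0,2)$, say $\beta=1$. This gives
\[
\int \log^+\tfrac1{d_g(x,y)}\,c_j(y)\dV(y)\le \int \max(1,d_g(x,y)^{-1})\dV(y)+\int c_j\log c_j\dV+C.
\]
The first integral is bounded uniformly in $x$, because $d^{-1}$ is integrable in two dimensions. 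Hence $\sup_x\int\log^+\frac1{d_g}\,c_j(y)\dV(y)\le C(1+\int c_j\log^+ c_j)<\infty$. Integrating this against $c_i(x)$, which has mass $\rho_i$, and using Tonelli's theorem then finishes the argument.

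I expect the main obstacle to be exactly this Young-inequality step, since entropy alone controls the logarithmic singularity only through exponential integrability of the kernel. The key point is that any positive multiple $e^{\beta\log^+(1/d)}=d^{-\beta}$ with $\beta<2$ is integrable on a surface, so no smallness is required. Everything else consists of routine uniform-in-$x$ bounds from compactness of $M$.
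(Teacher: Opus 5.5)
Your proposal is correct and follows the paper's proof in all essentials. Both arguments bound $|G|$ by $\frac{1}{2\pi}\log^+(1/d_g)+C$ via \eqref{eq:singularity}, control $\sup_x\int_M\log^+(1/d_g(x,y))\,c_j(y)\,dV_g(y)$ by entropy duality against the integrable weight $\max\{1,d_g^{-\gamma}\}$ with $\gamma<2$, and conclude with $|c_i-\rho_i|\le c_i+\rho_i$ and Tonelli. The only difference is cosmetic: you use the pointwise Fenchel--Young inequality $ab\le e^{a-1}+b\log b$, whereas the paper uses the integrated Gibbs inequality with the truncation $\psi_N$ and monotone convergence, which your nonnegative pointwise bound lets you skip.
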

\begin{proof}
Put $p_i=c_i/\rho_i$. Since $\operatorname{vol}_g(M)=1$, both $dV_g$ and $p_i\,dV_g$ are probability measures. For a bounded measurable function $\psi$, the entropy inequality
\cite[Eq.~(2.8)]{BOPR} gives
\[
\int p_i\psi\dV\le\int p_i\log p_i\dV+\log\int e^\psi\dV.
\]
Indeed, integrate $s\log s-s+1\ge0$ against the probability density $q=e^\psi/\int e^\psi\dV$ with $s=p_i/q$. Fix $0<\gamma<2$ and apply the inequality to $\psi_N(y)=\min\{N,\gamma\log^+(1/d_g(x,y))\}$. The small-ball volume bound on the surface gives
\[
\sup_x\int_M\max\{1,d_g(x,y)^{-\gamma}\}\dV(y)<\infty.
\]
For example, summing over annuli of radii $2^{-k}r_0$ bounds the singular part by a constant times $\sum_{k\ge0}(2^{-k}r_0)^{2-\gamma}$. Monotone convergence therefore gives
\[
\sup_x\int p_j(y)\log^+(1/d_g(x,y))\dV(y)<\infty.
\]
The negative part of $\log(1/d_g(x,y))$ is bounded by $\log^+\operatorname{diam}(M)$, and $R$ is bounded. Applying the same kernel estimate also to the constant density gives
\[
\sup_x\int_M |G(x,y)|c_j(y)\dV(y)<\infty,
\qquad \sup_x\int_M |G(x,y)|\dV(y)<\infty.
\]
Using $|c_i-\rho_i|\le c_i+\rho_i$ and Tonelli's theorem, we obtain
\[
\iint_{M\times M}|G(x,y)|\,|c_i(x)-\rho_i|\,|c_j(y)-\rho_j|
\dV(x)\dV(y)<\infty.
\]
Finally, the Boltzmann entropy $H(c)$ is finite by assumption, with its negative part bounded by $s\log s\ge-e^{-1}$. The external-potential energy is finite because $\log(\rho_i h_i)$ is bounded.
\end{proof}

\begin{lemma}\label{lem:variation}
For a smooth positive density $c$ of prescribed masses and $\zeta\in C^\infty(M)^n$ with $\int_M\zeta_i\dV=0$ for every $i$, the directional derivative, with $A$, $\rho$ and $h$ fixed, satisfies
\[
D\F(c)[\zeta]
:=\left.\frac{\mathrm d}{\mathrm d\varepsilon}
\F(c+\varepsilon\zeta)\right|_{\varepsilon=0}
=\sum_i\int_M\mu_i(c)\zeta_i\dV,
\]
where
\begin{align}\label{def:mu}
\mu_i(c)=\log\frac{c_i}{\rho_i h_i}+1-U_i[c].
\end{align}
Along a smooth positive solution of \eqref{eq:flow}, the free-energy dissipation identity reads
\begin{equation}\label{eq:dissipation}
\frac{\mathrm d}{\mathrm dt}\F(c(t))=-\D(c(t)),\qquad
\D(c)=\sum_i\int c_i|\nabla_g\mu_i(c)|^2\dV.
\end{equation}
\end{lemma}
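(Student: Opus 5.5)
The plan is to differentiate each of the three pieces of $\F$ separately along the affine path $c+\varepsilon\zeta$. Since $c$ is smooth and positive on the compact surface, $\min_M c_i>0$, so $c+\varepsilon\zeta$ stays smooth and positive for $|\varepsilon|$ small. All integrands are then smooth in $(\varepsilon,x)$ and we may differentiate under the integral sign.

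For the entropy, $\frac{\mathrm d}{\mathrm d\varepsilon}\int (c_i+\varepsilon\zeta_i)\log(c_i+\varepsilon\zeta_i)\big|_{0}=\int(\log c_i+1)\zeta_i$. The external-potential term is linear and contributes $-\int\zeta_i\log(\rho_ih_i)$. Combining these gives $\int(\log\frac{c_i}{\rho_ih_i}+1)\zeta_i$.

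For the interaction, $W_A$ is a quadratic form in $c-\rho$. Its derivative is
\[
\frac12\sum_{i,j}a_{ij}\int\bigl(\zeta_i\,\G(c_j-\rho_j)+(c_i-\rho_i)\,\G\zeta_j\bigr).
\]
The symmetry of $G$ gives $\int f\,\G g=\int g\,\G f$, which is justified here since all functions are smooth and $G$ is integrable uniformly by \eqref{eq:singularity}. Together with $a_{ij}=a_{ji}$, the second sum equals the first after relabelling, so the derivative of $W_A$ is $\sum_i\int\zeta_iU_i[c]$. This yields $\mu_i(c)$ as claimed. The zero-mean condition on $\zeta_i$ only ensures that the perturbation preserves mass, so that $\F$ stays on its natural domain. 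The constant $1$ in $\mu_i$ is harmless for the same reason.

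For the dissipation identity, I take a smooth positive solution and set $\zeta=\partial_tc$. Mass preservation gives $\int\partial_tc_i=0$. Smoothness in $t$ makes $t\mapsto\F(c(t))$ differentiable, and the chain rule gives $\frac{\mathrm d}{\mathrm dt}\F(c(t))=D\F(c(t))[\partial_tc]$. More concretely, I can repeat the computation above with $t$-derivatives, where differentiation under the integral is again justified by smoothness on $[t_0-\delta,t_0+\delta]\times M$ and the integrability of $G$.

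Next I note that $\nabla_g\mu_i=\nabla_g(\log\frac{c_i}{h_i}-U_i[c])$, because $\log\rho_i$ and $1$ are constants. Hence \eqref{eq:flow} reads $\partial_tc_i=\Div(c_i\nabla_g\mu_i)$. Integrating by parts with the stated convention gives
\[
\sum_i\int\mu_i\,\Div(c_i\nabla_g\mu_i)=-\sum_i\int c_i|\nabla_g\mu_i|^2=-\D(c).
\]

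No step is a real obstacle. The only care needed is the symmetric-kernel exchange in the interaction term, which relies on Fubini with the $L^1$ bound on $G$ from the proof of Lemma~\ref{lem:energy-finite}, and on the symmetry of both $A$ and $G$.
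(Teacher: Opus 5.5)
Your proposal is correct and follows essentially the same route as the paper: you differentiate the entropy and external-potential terms directly, and you use the symmetry of $A$ and $G$ to collapse the interaction derivative to $-\sum_i\int U_i[c]\zeta_i$. You then take $\zeta=\partial_t c$, which mass conservation makes admissible, and integrate by parts using that $\mu_i$ differs from $\log(c_i/h_i)-U_i[c]$ by a constant. Your explicit justification of the chain rule along the trajectory, by redoing the computation with $t$-derivatives, is a small extra piece of care that the paper leaves implicit.
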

\begin{proof}
The zero-integral condition preserves each mass under $c\mapsto c+\varepsilon\zeta$. Since $c$ is smooth and positive on the compact surface, this perturbation remains positive for sufficiently small $|\varepsilon|$. Differentiating the Boltzmann entropy and the external-potential energy gives $\sum_i\int(\log(c_i/(\rho_i h_i))+1)\zeta_i\dV$. By symmetry of $A$ and $G$, the derivative of the interaction energy satisfies
\[
\begin{aligned}
D(-W_A)(c)[\zeta]
&=-\frac12\sum_{i,j}a_{ij}
\left(
\int\zeta_i\G(c_j-\rho_j)\dV
+\int(c_i-\rho_i)\G\zeta_j\dV
\right)\\
&=-\sum_i\int U_i[c]\zeta_i\dV.
\end{aligned}
\]
Combining these derivatives in \eqref{eq:energy} gives the variation formula. Integrating \eqref{eq:flow} on the closed surface gives
\[
\int_M\partial_t c_i\dV
=\frac{\mathrm d}{\mathrm dt}\int_M c_i\dV=0.
\]
We can therefore apply the first-variation formula with $\zeta_i=\partial_t c_i$.  Since $\mu_i$ differs from $\log(c_i/h_i)-U_i[c]$ by the constant $1-\log\rho_i$, integration by parts gives
\[
\frac{\mathrm d}{\mathrm dt}\F(c(t))
=\sum_i\int\mu_i\Div(c_i\nabla_g\mu_i)\dV
=-\sum_i\int c_i|\nabla_g\mu_i|^2\dV.\qedhere
\]
\end{proof}

\begin{remark}\label{rem:metric}
We explain the relation between \eqref{eq:metric} and
\eqref{eq:flow}.
At a smooth positive density $c$, every smooth variation $\zeta$ with $\int_M\zeta_i\dV=0$ can be written as
\[
\zeta_i=-\Div(c_i\nabla_g\psi_i).
\]
Indeed, the operator $-\Div(c_i\nabla_g\cdot)$ is uniformly elliptic, its kernel consists of the constants, and the zero-integral condition ensures solvability. Each $\psi_i$ is unique up to an additive constant.

Writing $p_i=c_i/\rho_i$, the quadratic form corresponding to \eqref{eq:metric} is \cite[Section~2.6]{AS}\cite[Section~1]{Otto}
\[
\sum_i\rho_i\int p_i|\nabla_g\psi_i|^2\dV
=\sum_i\int c_i|\nabla_g\psi_i|^2\dV.
\]
By Lemma~\ref{lem:variation} and integration by parts,
\[
D\F(c)[\zeta]
=\sum_i\int\mu_i(c)\zeta_i\dV
=\sum_i\int c_i\nabla_g\mu_i(c)\cdot\nabla_g\psi_i\dV.
\]
Thus, with respect to this quadratic form, the negative gradient is $\bigl(\Div(c_i\nabla_g\mu_i)\bigr)_i$, giving \eqref{eq:flow}.

Without the factors $\rho_i$ in \eqref{eq:metric}, the corresponding calculation uses
$\mathcal E(p)=\F(\rho_1p_1,\ldots,\rho_np_n)$. For a smooth variation $\eta$ with $\int_M\eta_i\dV=0$,
\[
D\mathcal E(p)[\eta]
=\sum_i\rho_i\int_M\mu_i(\rho p)\eta_i\dV.
\]
It would therefore give $\partial_t p_i=\Div(p_i\nabla_g(\rho_i\mu_i))$, introducing an additional factor $\rho_i$ in the $i$th density equation.
\end{remark}

\begin{lemma}\label{lem:stationary}
For a smooth positive density $c$ with $\int_M c_i\dV=\rho_i$, the following conditions are equivalent.
\begin{enumerate}
\item $c$ is a constrained critical point of $\F$.
\item $c$ is stationary for \eqref{eq:flow}.
\item For each $i\in I$, we get
\begin{equation}\label{eq:stationary}
c_i=\rho_i\frac{h_i e^{U_i[c]}}{\int_M h_i e^{U_i[c]}\dV}.
\end{equation}
\end{enumerate}
Under these conditions, $u_i=U_i[c]$ solves \eqref{eq:elliptic} and has zero mean.
\end{lemma}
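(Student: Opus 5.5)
I will prove the cycle of implications (1)$\Rightarrow$(3)$\Rightarrow$(2)$\Rightarrow$(1). The single fact driving everything is that each of the three conditions is equivalent to
\[
\mu_i(c)=\log\frac{c_i}{\rho_i h_i}+1-U_i[c]\ \text{ is constant on }M\text{ for every }i\in I.
\]
Since $c$ is smooth and positive, $U_i[c]$ is smooth by \eqref{eq:poisson} and elliptic regularity. Hence $\mu_i(c)$ is a smooth function, and all manipulations below are classical.

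\emph{(1) is equivalent to constancy of $\mu_i$.} Being a constrained critical point means $D\F(c)[\zeta]=0$ for all smooth $\zeta$ with $\int\zeta_i\dV=0$. By Lemma~\ref{lem:variation} this says $\sum_i\int\mu_i(c)\zeta_i\dV=0$ for all such $\zeta$. Choosing $\zeta$ supported in one component, say $\zeta_i=\mu_i(c)-\overline{\mu_i(c)}$ with $\overline{\mu_i(c)}=\int\mu_i(c)\dV$ and $\zeta_j=0$ for $j\neq i$, gives
\[
\int\bigl(\mu_i-\overline{\mu_i}\bigr)^2\dV=0 .
\]
So $\mu_i$ is constant. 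Conversely, if every $\mu_i$ is constant, then the derivative vanishes on all mean-zero $\zeta$.

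\emph{(2) is equivalent to constancy of $\mu_i$.} The flow \eqref{eq:flow} reads $\partial_tc_i=\Div(c_i\nabla_g\mu_i)$, because $\mu_i$ and $\log(c_i/h_i)-U_i[c]$ differ by a constant. Constancy of $\mu_i$ clearly gives stationarity. Conversely, suppose $\Div(c_i\nabla_g\mu_i)=0$. Multiplying by $\mu_i$ and integrating by parts gives
\[
\int c_i|\nabla_g\mu_i|^2\dV=0 ,
\]
which is just $\D(c)=0$ in \eqref{eq:dissipation}. Since $c_i>0$ and $M$ is connected, $\mu_i$ is constant.

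\emph{(3) is equivalent to constancy of $\mu_i$.} If $\mu_i\equiv k_i$, then $c_i=\rho_ie^{k_i-1}h_ie^{U_i[c]}$. Integrating and using $\int c_i\dV=\rho_i$ gives
\[
e^{k_i-1}=\Bigl(\int h_ie^{U_i[c]}\dV\Bigr)^{-1},
\]
which is \eqref{eq:stationary}. Conversely, \eqref{eq:stationary} makes $\log(c_i/(\rho_ih_i))-U_i[c]$ equal to the constant $-\log\int h_ie^{U_i[c]}\dV$.

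\emph{Consequence for the elliptic system.} Set $u_i=U_i[c]$. It has zero mean by \eqref{eq:poisson}. By \eqref{eq:stationary}, $c_j=\rho_jh_je^{u_j}/\int h_je^{u_j}\dV$. Substituting this into $-\Delta_gu_i=\sum_ja_{ij}(c_j-\rho_j)$ gives exactly \eqref{eq:elliptic}.

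There is no serious obstacle. The only points requiring care are the smoothness of $U_i[c]$, needed to justify the integrations by parts, and the use of positivity of $c_i$ together with connectedness of $M$ in the step $\D(c)=0\Rightarrow\mu_i$ constant.
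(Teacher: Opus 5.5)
Your proposal is correct and takes essentially the same route as the paper: both reduce each condition to constancy of every $\mu_i(c)$, via the first-variation formula for (1), testing the stationary equation with $\mu_i$ to get $\D(c)=0$ and then using positivity and connectedness for (2), and solving for $c_i$ under the mass constraint for (3), before substituting into \eqref{eq:poisson}. You announce a cycle of implications but actually prove that each condition is equivalent to constancy of $\mu_i$, which is equally valid.
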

\begin{proof}
Vanishing of the variation against all zero-mean test functions means that each $\mu_i$ is constant. Solving this relation for $c_i$ and imposing its mass gives \eqref{eq:stationary}. Conversely, that formula makes each $\mu_i$ constant, so both the constrained variation and the right-hand side of the flow vanish. If the flow is stationary, testing it with $\mu_i$ gives $\D(c)=0$. Positivity and connectedness imply that each $\mu_i$ is constant. Finally, substitution into \eqref{eq:poisson} gives \eqref{eq:elliptic}.
\end{proof}

\section{Local existence and continuation}\label{sec:local}
{\phantomsection}\label{not:time}
On a time interval $[a,b]$, $C_tC_x=C([a,b];C(M)^n)$ with $\|c\|_{C_tC_x}=\sup_{a\le t\le b}\max_i\|c_i(t)\|_\infty$. Parabolic H\"older spaces $C^{k+\alpha,(k+\alpha)/2}$ use the usual local-coordinate norms with distance $|x-y|+|t-s|^{1/2}$. One time derivative has the same order as two spatial derivatives.

\begin{lemma}\label{lem:heat}
Let $S(t)=e^{t\Delta_g}$ and let $K_t(x,y)$ be its heat kernel on $M$. For $0<t\le1$ and $1<q<\infty$, the following estimates hold for scalar functions $f$ and tangent vector fields $X$ with finite right-hand sides,
\begin{equation}\label{eq:heat}
\begin{split}
\norm{S(t)f}_\infty&\le C_qt^{-1/q}\norm f_q,\\
\norm{S(t)\Div X}_\infty&\le C_qt^{-1/2-1/q}\norm X_q,\\
\norm{S(t)\Div X}_\infty&\le Ct^{-1/2}\norm X_\infty.
\end{split}
\end{equation}
For nonnegative integers $j,k,\ell$, there are constants depending only on $(M,g)$ and these orders such that
\begin{equation}\label{eq:kernel}
|\partial_t^j\nabla_x^k\nabla_y^\ell K_t(x,y)|
\le C_{j,k,\ell}t^{-1-j-(k+\ell)/2}e^{-d_g(x,y)^2/(Ct)}.
\end{equation}
\end{lemma}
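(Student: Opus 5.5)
The plan is to derive all three estimates in \eqref{eq:heat} from the Gaussian kernel bound \eqref{eq:kernel}, so the kernel bound comes first. For \eqref{eq:kernel}, I would invoke the standard heat kernel estimates on a compact Riemannian manifold, which follow from Li--Yau type Gaussian bounds or from the parametrix construction. The parametrix writes $K_t$ near the diagonal as the Euclidean heat kernel in normal coordinates multiplied by a smooth asymptotic expansion $\sum_k t^k u_k(x,y)$, plus a smooth remainder. Differentiating this expansion in $t$, $x$ and $y$ produces exactly the factors $t^{-j}$ and $t^{-(k+\ell)/2}$: each spatial derivative of $e^{-d^2/(4t)}$ costs a factor $d/t\lesssim t^{-1/2}e^{d^2/(8t)}$, which is absorbed by enlarging $C$ in the exponent. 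Off the diagonal, with $d_g(x,y)\ge r_0/2$ and $t\le1$, the kernel and all its derivatives are $O(t^\infty)e^{-c/t}$, which is dominated by the right-hand side. I expect to cite this step rather than redo it. The only point needing care is uniformity up to $t=1$, which holds by compactness.

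Next, for the first estimate in \eqref{eq:heat}, I would apply H\"older's inequality: $|S(t)f(x)|\le\|K_t(x,\cdot)\|_{q'}\|f\|_q$. Writing $q'$ for the conjugate exponent, \eqref{eq:kernel} with $j=k=\ell=0$ gives
\[
\int_M t^{-q'}e^{-q'd_g(x,y)^2/(Ct)}\dV(y)\le Ct^{1-q'}.
\]
This bound uses the surface volume estimate $\operatorname{vol}(B_r(x))\le Cr^2$, summed over dyadic annuli of radius $2^k\sqrt t$ exactly as in the proof of Lemma~\ref{lem:energy-finite}. Hence $\|K_t(x,\cdot)\|_{q'}\le Ct^{-1+1/q'}=Ct^{-1/q}$.

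For the divergence estimates, I would integrate by parts in $y$, first for smooth $X$ and then by density:
\[
S(t)\Div X(x)=-\int_M g_y\bigl(\nabla_yK_t(x,y),X(y)\bigr)\dV(y).
\]
The case $\ell=1$ of \eqref{eq:kernel} gives $|\nabla_yK_t|\le Ct^{-3/2}e^{-d^2/(Ct)}$. The same annulus computation yields $\|\nabla_yK_t(x,\cdot)\|_{q'}\le Ct^{-3/2+1/q'}=Ct^{-1/2-1/q}$ and $\|\nabla_yK_t(x,\cdot)\|_1\le Ct^{-1/2}$. H\"older's inequality then gives the second and third estimates.

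The only genuine obstacle is the kernel bound \eqref{eq:kernel}, specifically the mixed time and two-variable derivatives. I would handle it by citing the classical Gaussian estimates, for example from Grigor'yan or from Davies' treatment of heat kernels on compact manifolds. The alternative is to derive the time derivatives from the spatial ones via $\partial_tK=\Delta_xK$, and the $y$-derivatives from the $x$-derivatives via the symmetry $K_t(x,y)=K_t(y,x)$.
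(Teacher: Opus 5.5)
Your proposal is correct and is essentially the paper's proof. The paper likewise cites the Gaussian derivative bound \eqref{eq:kernel} (from Ludewig) and then obtains \eqref{eq:heat} exactly as you do: from the Gaussian integral bound $\int_M e^{-a d_g(x,y)^2/t}\dV(y)\le C_at$, H\"older's inequality with exponent $q'$ (and $q'=1$ for the last estimate), and the identity $S(t)\Div X(x)=-\int_M\langle\nabla_yK_t(x,y),X(y)\rangle_g\dV(y)$; it sums over linear annuli $k\sqrt t\le d_g(x,y)<(k+1)\sqrt t$ rather than dyadic ones, which changes nothing. If you cite Grigor'yan or Davies instead of Ludewig, make sure the source covers the mixed $\partial_t^j\nabla_x^k\nabla_y^\ell$ derivatives with the Gaussian factor, since a parametrix remainder that is merely $O(t^N)$, or symmetry of $K_t$ without the semigroup identity, does not by itself yield that.
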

\begin{proof}
The kernel estimate \eqref{eq:kernel} follows from Ludewig \cite[Theorems 1.1 and 3.5]{Ludewig}. 

By compactness, we find a $C_V$ such that
\[
\operatorname{vol}_g(B_s(x))\le C_Vs^2
\qquad(x\in M,\ s>0).
\]
For $a>0$, set $A_k=\{y\in M:k\sqrt t\le d_g(x,y)<(k+1)\sqrt t\}$.
Then
\[
\begin{aligned}
\int_M e^{-a d_g(x,y)^2/t}\dV(y)
&=\sum_{k=0}^\infty\int_{A_k}e^{-a d_g(x,y)^2/t}\dV(y)\\
&\le\sum_{k=0}^\infty e^{-ak^2}
  \operatorname{vol}_g(B_{(k+1)\sqrt t}(x))\\
&\le C_Vt\sum_{k=0}^\infty(k+1)^2e^{-ak^2}
=:C_at,
\end{aligned}
\]
where $C_a<\infty$ is independent of $x$ and $t$. For $1<q<\infty$, let $q'=q/(q-1)$. The heat-kernel estimates \eqref{eq:kernel} and the preceding integral bound give, uniformly in $x\in M$,
\[
\begin{aligned}
\|K_t(x,\cdot)\|_{L^{q'}}
&\le Ct^{-1}
\left(\int_M e^{-q'd_g(x,y)^2/(Ct)}\dV(y)\right)^{1/q'}
\le C_qt^{-1+1/q'}=C_qt^{-1/q},\\
\|\nabla_yK_t(x,\cdot)\|_{L^{q'}}
&\le Ct^{-3/2}
\left(\int_M e^{-q'd_g(x,y)^2/(Ct)}\dV(y)\right)^{1/q'}
\le C_qt^{-3/2+1/q'}=C_qt^{-1/2-1/q}.
\end{aligned}
\] 
Integrating by parts, we get $S(t)\Div X(x)=-\int\langle\nabla_yK_t(x,y),X(y)\rangle_g\dV(y)$. H\"older's inequality proves \eqref{eq:heat}. The last estimate uses $q'=1$.
\end{proof}

We also use the zero-mean elliptic estimates
\begin{equation}\label{eq:ell-est}
\norm{\G f}_{W^{2,q}}\le C_q\norm f_q,\qquad
\norm{\G f}_{C^{k+2,\alpha}}\le C_{k,\alpha}\norm f_{C^{k,\alpha}}.
\end{equation}
These follow from the global $W^{2,q}$ and Schauder estimates for $-\Delta_g$ on the closed surface $M$. The zero-mean normalization of $\G f$ removes the constant kernel and allows the lower-order terms to be controlled by the right-hand side.

For the equation
$\partial_t c_i=\Delta_g c_i-\Div(c_i b_i[c])$, where
$b_i[c]=\nabla_g\log h_i+\nabla_g U_i[c]$, \emph{a continuous mild solution}
on $[a,a+L]$ means a vector-valued function $c\in C([a,a+L];C(M)^n)$ satisfying
\begin{equation}\label{eq:mild}
c_i(t)=S(t-a)c_i(a)
-\int_a^t S(t-r)\Div\bigl(c_i(r)b_i[c(r)]\bigr)\,\mathrm dr
\end{equation}
for every $t\in[a,a+L]$ and every $i\in I$. The integral is taken in $C(M)$, with $S(t-r)\Div$ interpreted by the heat kernel. The bound in \eqref{eq:heat} makes the time singularity integrable for continuous bounded $c$ of fixed masses. This definition does not require derivatives of $c$ to exist.

\begin{lemma}\label{lem:smoothing}
Let $c$ be a continuous mild solution of $\partial_t c_i=\Delta_g c_i-\Div(c_i b_i[c])$ on $[a,a+L]$, where $0<L\le1$, $b_i[c]=\nabla_g\log h_i+\nabla_g U_i[c]$ and the component masses are fixed. If $\sup_{[a,a+L]}\norm c_\infty\le B$, then for $0<\delta<L$, every integer $m\ge0$ and $0<\alpha<1$,
\[
\sup_{a+\delta\le t\le a+L}\sum_i
\left(\norm{c_i(t)}_{C^{m,\alpha}}+\norm{U_i[c(t)]}_{C^{m+2,\alpha}}\right)
\le C(B,L,\delta,m,\alpha).
\]

\end{lemma}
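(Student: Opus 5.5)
The plan is a parabolic bootstrap that starts from the mild formulation \eqref{eq:mild} and uses only the heat estimates of Lemma~\ref{lem:heat} and the elliptic estimates \eqref{eq:ell-est}. The first step is the drift. Because $\sup\norm{c}_\infty\le B$, \eqref{eq:ell-est} bounds $\norm{U_i[c(t)]}_{W^{2,q}}$ by $C_qB$ for every $q<\infty$. Morrey then gives $\norm{\nabla_g U_i[c(t)]}_{C^{\beta}}\le C(B,\beta)$ for every $\beta<1$. Together with the smoothness of $h_i$, this gives a uniform bound $\norm{b_i[c(t)]}_{C^\beta}\le C(B)$ on $[a,a+L]$. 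Hence the flux $F_i=c_ib_i[c]$ lies in $L^\infty$ with norm at most $C(B)$.

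The second step is spatial H\"older regularity of $c$ at positive times. I will differentiate the Duhamel formula with the kernel bounds \eqref{eq:kernel}. For the linear part, $\norm{\nabla S(t-a)c(a)}_\infty\le C(t-a)^{-1/2}B$, which stays bounded for $t\ge a+\delta/2$. For the inhomogeneous part, a H\"older difference of $S(t-r)\Div F$ costs a factor $(t-r)^{-1/2-\gamma/2}$ against $\norm F_\infty$. This factor is integrable for $\gamma<1$. The result is $c(t)\in C^{\gamma}$ uniformly for $t\ge a+\delta/2$. To obtain H\"older continuity in time, I will restart the mild formula at intermediate times. From there the equation can be read as a linear parabolic equation in divergence form,
\[
\partial_t c_i=\Delta_g c_i-\Div(c_ib_i)
=\Delta_g c_i-g(b_i,\nabla_g c_i)-c_i\Div b_i ,
\]
with $\Div b_i=\Delta_g\log h_i-\sum_ja_{ij}(c_j-\rho_j)$. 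This coefficient is bounded and, once $c\in C^\gamma$, also H\"older continuous. A preliminary step must check that a mild solution with these H\"older bounds is a classical solution. I will do this by the standard argument that the Duhamel integral of a H\"older forcing is $C^{2+\gamma,1+\gamma/2}$; the forcing here is $-\Div(cb)$ with $cb\in C^{\gamma}$, which first gives $C^{1+\gamma}$, and then a second pass gives the full parabolic H\"older class.

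The third step is induction on $m$ using interior-in-time parabolic Schauder estimates on $[a+\delta_k,a+L]$, where $\delta_k$ increases from $\delta/2$ up to $\delta$ over finitely many steps. Suppose $c\in C^{m,\alpha}$ in space, together with the corresponding parabolic norm. Then \eqref{eq:ell-est} gives $U_i\in C^{m+2,\alpha}$, so $b_i\in C^{m+1,\alpha}$ and $\Div b_i\in C^{m,\alpha}$. Schauder estimates for the linear equation above, applied on a slightly shorter time window through a time cutoff $\chi(t)$, then give $c\in C^{m+2+\alpha,(m+2+\alpha)/2}$. The constants depend only on $B,L,\delta,m,\alpha$ and on $(M,g),A,\rho,h$. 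Each stage spends a fixed fraction of $\delta$, so after finitely many stages the bound holds on $[a+\delta,a+L]$ for the prescribed $m$. The bound on $U_i[c(t)]$ in $C^{m+2,\alpha}$ then follows once more from \eqref{eq:ell-est}.

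I expect the main obstacle to be the first gain of regularity, namely passing from a merely continuous, bounded mild solution to a H\"older one with classical derivatives, since Schauder theory cannot start until that is in place. My first attempt will use the H\"older estimate on $\int_a^tS(t-r)\Div F(r)\,dr$ with only $F\in L^\infty$, proved directly from \eqref{eq:kernel} by splitting the integral at $t-r\sim d_g(x,y)^2$. This estimate, combined with the cutoff in time that removes the initial layer, should give the base case.
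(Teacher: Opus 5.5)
Your proposal is correct and follows essentially the same route as the paper. The chain is the same: a H\"older bound for the Duhamel term from $F\in L^\infty$ alone (the paper's $\min\{r^{-1/2},d\,r^{-1}\}$ splitting at $r\sim d_g(x,y)^2$), then a $C^{1+\beta}$ gain from spatially H\"older $F$, then parabolic Schauder theory for the nondivergence form $\partial_tc_i=\Delta_gc_i-b_i\cdot\nabla_gc_i-q_ic_i$, and finally induction on the derivative order.

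The differences are only in execution. You get time H\"older continuity by restarting the mild formula rather than from the bound on $\partial_rT(r)$. You also quote the Duhamel--H\"older and higher-order Schauder theorems where the paper proves the needed cases by mollification and difference quotients. Your second pass also needs a time H\"older bound on $\nabla_gc$, which the paper derives explicitly and you should state.
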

\begin{proof}
\par\smallskip
\noindent\emph{Step 1. H\"older continuity.}
Set $F_i=c_i b_i[c]$ and $q_i=\Delta_g\log h_i-\sum_j a_{ij}(c_j-\rho_j)$. The elliptic estimates bound $b$, $q$ and $F$ in terms of $B$ and the fixed data. We study the solution in the sense of (\ref{eq:mild}). Put $T(r)X=S(r)\Div X$ and write
\[
w(t)=\int_a^tT(t-s)F(s)\,\mathrm ds,
\qquad M_F=\sup_{a\le s\le a+L}\|F(s)\|_\infty.
\]
Integrating the heat-kernel derivative estimates \eqref{eq:kernel} in the $y$ variable gives, for $0<r\le1$,
\begin{align}\label{ineq:Txinfty}
\|T(r)X\|_\infty\le Cr^{-1/2}\|X\|_\infty.
\end{align}
Differentiating the kernel representation in $x$ and using the Gaussian integral bound, we obtain
\begin{align}\label{ineq:NablaTxinfty}
\|\nabla_gT(r)X\|_\infty
&\le \|X\|_\infty
\sup_{x\in M}\int_M|\nabla_x\nabla_yK_r(x,y)|_g\,\dV(y)\nonumber\\
&\le Cr^{-2}\|X\|_\infty
\sup_{x\in M}\int_M e^{-d_g(x,y)^2/(Cr)}\,\dV(y)\nonumber\\
&\le Cr^{-1}\|X\|_\infty.
\end{align}
A similar calculation using the estimate for $\partial_r\nabla_yK_r$ gives
\begin{align}\label{ineq:PartialTTxinfty}
\|(\partial_rT(r))X\|_\infty
\le Cr^{-3/2}\|X\|_\infty.
\end{align}
First we have the H\"older bound in spatial variables. For $d=d_g(x,y)<1$, (\ref{ineq:Txinfty}) and (\ref{ineq:NablaTxinfty}) yield
\[
|w(t,x)-w(t,y)|
\le CM_F\int_0^L\min\{r^{-1/2},dr^{-1}\}\,\mathrm dr
\le C_LM_Fd(1+|\log d|).
\]
Now we study the H\"older bound in the time variable. For $a\le t<t+h\le a+L$, (\ref{ineq:PartialTTxinfty}) yields
\[
\|w(t+h)-w(t)\|_\infty
\le CM_F\left(\int_0^h r^{-1/2}\,\mathrm dr
+h\int_h^L r^{-3/2}\,\mathrm dr\right)
\le CM_F\sqrt h.
\]
Since $d(1+|\log d|)\le C_\gamma d^\gamma$ for $d<1$ and the initial term $S(t-a)c(a)$ is smooth away from $t=a$, we obtain $c\in C^{\gamma,\gamma/2}$ on interior time intervals for every $0<\gamma<1$. The elliptic estimates \eqref{eq:ell-est} give $b$ the same time regularity and a uniform spatial $C^{1,\gamma}$ bound there. Hence $F=cb$ is spatially $C^\gamma$ uniformly in time.
\par\smallskip
\noindent\emph{Step 2. Classical solutions.}
Fix $b\in(a,a+L)$ and choose $a<a_0<b$. We establish the required regularity on $[b,a+L]$ by applying the mild formulation \eqref{eq:mild} with initial time $a_0$. Fix $0<\gamma<1$.
By Step~1,
\[
M_\gamma:=\sup_{a_0\le s\le a+L}\|F(s)\|_{C^\gamma}<\infty.
\]
Write
\[
c(t)=S(t-a_0)c(a_0)-w(t),
\qquad
w(t)=\int_{a_0}^tT(t-s)F(s)\,\mathrm ds.
\]
The initial semigroup term is smooth, with controlled derivatives, on every time interval whose left endpoint is greater than $a_0$. It therefore suffices first to estimate $w$.

We work component-wise and initially hold the time variable fixed. To use the spatial H\"older continuity of the vector field $F$, choose a finite coordinate cover and a subordinate smooth partition of unity $\{\chi_\ell\}$. In each chart write $F=\sum_aF_\ell^a\partial_{x_\ell^a}$, and choose a smooth cutoff $\psi_\ell$ supported in that chart and equal to one near $\operatorname{supp}\chi_\ell$. Set
\[
f_{\ell,a}=\chi_\ell F_\ell^a,
\qquad
Y_{\ell,a}=\psi_\ell\partial_{x_\ell^a},
\]
extending both by zero outside the chart. Relabeling the pairs
$(\ell,a)$ by $\nu$ gives
\[
F=\sum_\nu f_\nu Y_\nu,
\qquad
\sum_\nu\|f_\nu\|_{C^\gamma}\le C\|F\|_{C^\gamma},
\]
where the vector fields $Y_\nu$ are fixed and globally smooth.

Consider one term $fY$. Fix $x\in M$ and consider variable $z$. The kernel representation and integration by parts give
\[
\begin{aligned}
T(r)(fY)(z)
&=T(r)((f-f(x))Y)(z)+f(x)S(r)(\Div Y)(z)\\
&=-\int_M
\langle\nabla_yK_r(z,y),Y(y)\rangle_g
\bigl(f(y)-f(x)\bigr)\,\dV(y)
+f(x)S(r)(\Div Y)(z).
\end{aligned}
\]
We differentiate in $z$ while keeping $x$ fixed and only afterwards set $z=x$. Thus no derivatives of $f$ are required.

By \eqref{eq:kernel},
\[
|\nabla_z^k\partial_r^j\nabla_yK_r(z,y)|_{z=x}
\le Cr^{-1-j-(k+1)/2}
e^{-d_g(x,y)^2/(Cr)}.
\]
Moreover,
\[
|f(y)-f(x)|\le[f]_{C^\gamma}d_g(x,y)^\gamma.
\]
The elementary inequality
\[
d^\gamma e^{-d^2/(Cr)}
\le C_\gamma r^{\gamma/2}e^{-d^2/(2Cr)}
\qquad(d\ge0)
\]
and the Gaussian integral estimate proved above imply
\[
\begin{aligned}
\int_Md_g(x,y)^\gamma
e^{-d_g(x,y)^2/(Cr)}\,\dV(y)
\le C_\gamma r^{\gamma/2}
\int_Me^{-d_g(x,y)^2/(2Cr)}\,\dV(y)\le Cr^{1+\gamma/2}.
\end{aligned}
\]
Consequently,
\begin{align}
\Bigg|\nabla_z^k\partial_r^j
\int_M &
\langle\nabla_yK_r(z,y),Y(y)\rangle_g
\bigl(f(y)-f(x)\bigr)\,\dV(y)
\Bigg|_{z=x}
\Bigg|_g\nonumber\\
&\quad\le [f]_{C^\gamma}\|Y\|_\infty
\int_M
|\nabla_x^k\partial_r^j\nabla_yK_r(x,y)|_g
d_g(x,y)^\gamma\,\dV(y)\nonumber\\
&\quad\le C[f]_{C^\gamma}r^{-1-j-(k+1)/2}
\int_Md_g(x,y)^\gamma
e^{-d_g(x,y)^2/(Cr)}\,\dV(y)\nonumber\\
&\quad\le C[f]_{C^\gamma}
r^{-1-j-(k+1)/2}r^{1+\gamma/2}.\nonumber
\end{align}
For the remaining term, $\Div Y$ is fixed and smooth, and
\[
\partial_r^jS(r)(\Div Y)
=S(r)\Delta_g^j(\Div Y).
\]
Using $H^{k+2}(M)\hookrightarrow C^k(M)$ and the contractivity of $S(r)$ in the spectral Sobolev norm, we obtain
\[
\begin{aligned}
|\nabla_z^k\partial_r^j
\bigl[f(x)S(r)(\Div Y)(z)\bigr]|_{z=x}|_g&\le \|f\|_\infty
\|\nabla_g^kS(r)\Delta_g^j(\Div Y)\|_\infty\\
&\quad\le C\|f\|_\infty
\|S(r)\Delta_g^j(\Div Y)\|_{H^{k+2}}\\
&\quad\le C\|f\|_\infty
\|\Delta_g^j(\Div Y)\|_{H^{k+2}}
\le C_{k,j,Y}\|f\|_\infty,
\end{aligned}
\]
uniformly for $0<r\le1$, since $Y$ is fixed and smooth. Summing over the finite decomposition, we obtain
\begin{equation}\label{eq:holder-kernel}
\begin{aligned}
\|\nabla^k\partial_r^jT(r)F\|_\infty
&\le C\left(
r^{-1-j-(k+1)/2}r^{1+\gamma/2}+1
\right)\|F\|_{C^\gamma}\\
&\le Cr^{-(1+k+2j-\gamma)/2}\|F\|_{C^\gamma}
\end{aligned}
\end{equation}
for $k=0,1,2$, $j=0,1$ and $0<r\le1$. The last inequality uses $1+k+2j-\gamma>0$.

Fix $0<\beta<\gamma$. The interpolation estimate
\[
\|u\|_{C^{1,\beta}}
\le C\left(
\|u\|_{C^1}
+\|\nabla_gu\|_\infty^{1-\beta}
 \|\nabla_g^2u\|_\infty^\beta
\right)
\]
follows by comparing the gradient at two points by parallel transport and using the smaller of its supremum bound and its Lipschitz bound. See also \cite[Section~6.8]{GT}. Applying it to $u=T(r)F$ and using \eqref{eq:holder-kernel}, we get
\[
\begin{aligned}
\|T(r)F\|_{C^{1,\beta}}
&\le C\left[
r^{-1+\gamma/2}
+\bigl(r^{-1+\gamma/2}\bigr)^{1-\beta}
 \bigl(r^{-3/2+\gamma/2}\bigr)^\beta
\right]\|F\|_{C^\gamma}\\
&\le Cr^{-1+(\gamma-\beta)/2}\|F\|_{C^\gamma},
\end{aligned}
\]
since
\[
\left(-1+\frac{\gamma}{2}\right)(1-\beta)
+\left(-\frac32+\frac{\gamma}{2}\right)\beta
=-1+\frac{\gamma-\beta}{2}.
\]
The exponent is greater than $-1$. Hence
\[
\begin{aligned}
\|w(t)\|_{C^{1,\beta}}
\le CM_\gamma\int_0^{t-a_0}
r^{-1+(\gamma-\beta)/2}\,\mathrm dr=\frac{2CM_\gamma}{\gamma-\beta}
(t-a_0)^{(\gamma-\beta)/2}
\le C_{\gamma,\beta}M_\gamma.
\end{aligned}
\]
These estimates can first be applied to the integrals truncated at $s=t-\varepsilon$. The same integrable bound shows that the truncated integrals converge in $C^{1,\beta}$ as $\varepsilon\downarrow0$, and justifies differentiation under the integral sign once in space.

We next establish the time H\"older estimates. For $a_0\le t<t+h\le a+L$ and $k=0,1$, changing variables gives
\[
\begin{aligned}
\nabla^kw(t+h)-\nabla^kw(t)
=\int_0^h\nabla^kT(r)F(t+h-r)\,\mathrm dr+\int_0^{t-a_0}
\nabla^k[T(r+h)-T(r)]F(t-r)\,\mathrm dr.
\end{aligned}
\]
Set
\[
p_k=\frac{1+k-\gamma}{2}\in(0,1).
\]
Formula \eqref{eq:holder-kernel} gives
\[
\|\nabla^kT(r)F(s)\|_\infty
\le CM_\gamma r^{-p_k},
\qquad
\|\nabla^k\partial_rT(r)F(s)\|_\infty
\le CM_\gamma r^{-p_k-1}.
\]
The new interval contributes at most $CM_\gamma\int_0^h r^{-p_k}\,\mathrm dr$. On the old interval with $r<h$, estimate the two operators separately, using $(r+h)^{-p_k}\le r^{-p_k}$. On the part $r\ge h$, use
\[
[T(r+h)-T(r)]X
=\int_r^{r+h}(\partial_\sigma T(\sigma))X\,\mathrm d\sigma
\]
to obtain
\[
\begin{aligned}
\|\nabla^k[T(r+h)-T(r)]F(t-r)\|_\infty
\le CM_\gamma\int_r^{r+h}\sigma^{-p_k-1}\,\mathrm d\sigma\le CM_\gamma h r^{-p_k-1}.
\end{aligned}
\]
No time derivative of $F$ is used. Since $L\le1$, these bounds yield
\[
\begin{aligned}
\|\nabla^kw(t+h)-\nabla^kw(t)\|_\infty
&\le CM_\gamma\left(
3\int_0^h r^{-p_k}\,\mathrm dr
+h\int_h^1r^{-p_k-1}\,\mathrm dr
\right)\\
&\le CM_\gamma
\left(\frac{3}{1-p_k}+\frac{1}{p_k}\right)h^{1-p_k}.
\end{aligned}
\]
Taking $k=0$ and $k=1$, respectively, gives
\[
\begin{split}
\|w(t+h)-w(t)\|_\infty
&\le C_\gamma M_\gamma h^{(1+\gamma)/2},\\
\|\nabla_gw(t+h)-\nabla_gw(t)\|_\infty
&\le C_\gamma M_\gamma h^{\gamma/2}.
\end{split}
\]
Together with the spatial $C^{1,\beta}$ bound and the smoothness of $S(t-a_0)c(a_0)$ away from $a_0$, these estimates imply $c\in C^{1+\beta,(1+\beta)/2}$ on smaller time intervals. Indeed, $\gamma>\beta$ supplies the required time exponents $(1+\beta)/2$ for $c$ and $\beta/2$ for $\nabla_gc$.

We now obtain the second spatial derivatives and the first time derivative. Since
\[
\Div b_i
=\Delta_g\log h_i+\Delta_gU_i[c]
=\Delta_g\log h_i-\sum_j a_{ij}(c_j-\rho_j),
\]
the nondivergence form is 
\begin{equation}\label{eq:linear-form}
\partial_tc_i=\Delta_gc_i-b_i\cdot\nabla_gc_i-q_ic_i,
\qquad
q_i=\Delta_g\log h_i-\sum_j a_{ij}(c_j-\rho_j).
\end{equation}
At this stage the equation holds in distributions, as follows from \eqref{eq:mild}. The preceding spatial regularity justifies the identity $\Div(c_ib_i)=b_i\cdot\nabla_gc_i+q_ic_i$.

Choose $a_1$ with $a_0<a_1<b$ and work on $J=[a_1,a+L]$. Elliptic regularity gives uniform spatial H\"older bounds for $b_i$ and $q_i$. It also gives, for $s,t\in J$,
\[
\|b_i(t)-b_i(s)\|_\infty
+\|q_i(t)-q_i(s)\|_\infty
\le C\|c(t)-c(s)\|_\infty.
\]
For the first term, this follows from the $W^{2,q}$ estimate with any fixed $q>2$ and the two-dimensional Sobolev embedding. Consequently,
\[
b_i,\ q_i,\ c_i,\ \nabla_gc_i\in C^{\beta,\beta/2}(M\times J).
\]
Define
\[
H_i:=-b_i\cdot\nabla_gc_i-q_ic_i.
\]
The product estimate in the parabolic H\"older space yields
\[
\begin{aligned}
\|H_i\|_{C^{\beta,\beta/2}}
\le C\bigl(
&\|b_i\|_{C^{\beta,\beta/2}}
 \|\nabla_gc_i\|_{C^{\beta,\beta/2}}\\
&+\|q_i\|_{C^{\beta,\beta/2}}
 \|c_i\|_{C^{\beta,\beta/2}}
\bigr),
\end{aligned}
\]
so $(\partial_t-\Delta_g)c_i=H_i$ has a controlled $C^{\beta,\beta/2}$ right-hand side.

We record the interior Schauder estimate in coordinates. Write
\[
\Delta_gu=a^{ab}\partial_{ab}u+d_g^a\partial_au,
\qquad
a^{ab}=g^{ab},\quad
d_g^b=|g|^{-1/2}\partial_a(|g|^{1/2}g^{ab}),
\]
using the Einstein summation convention. For a classical solution of
\[
u_t-a^{ab}u_{ab}-d^au_a+qu=f
\]
and nested coordinate cylinders $Q'\Subset Q$, the interior Schauder estimate \cite[Theorem~4.9]{Lieberman} gives
\[
\|u\|_{C^{2+\beta,1+\beta/2}(Q')}
\le C\left(
\|u\|_{C^0(Q)}+\|f\|_{C^{\beta,\beta/2}(Q)}
\right).
\]
The constant depends on the ellipticity, the coefficient $C^{\beta,\beta/2}$ bounds and the separation of the cylinders.

To apply this estimate without assuming classical regularity in advance, we approximate the right-hand side of the fixed heat equation. Extend $H_i$ constantly in time beyond the endpoints of $J$ and mollify in a finite atlas. This gives smooth functions $H_{i,\varepsilon}$ satisfying
\[
H_{i,\varepsilon}\longrightarrow H_i
\quad\text{uniformly on }M\times J,
\qquad
\sup_\varepsilon
\|H_{i,\varepsilon}\|_{C^{\beta,\beta/2}(M\times J)}
\le C\|H_i\|_{C^{\beta,\beta/2}(M\times J)}.
\]
Define
\[
u_{i,\varepsilon}(t)
=S(t-a_1)c_i(a_1)
+\int_{a_1}^tS(t-s)H_{i,\varepsilon}(s)\,\mathrm ds.
\]
These functions are smooth for $t>a_1$ and solve
\[
(\partial_t-\Delta_g)u_{i,\varepsilon}
=H_{i,\varepsilon}.
\]

Testing the distributional equation for $c_i$ against a Laplace eigenfunction $e_\ell$ gives
\[
\frac{\mathrm d}{\mathrm dt}\langle c_i(t),e_\ell\rangle
+\lambda_\ell\langle c_i(t),e_\ell\rangle
=\langle H_i(t),e_\ell\rangle.
\]
Solving these scalar equations and using continuity at $a_1$ identifies
\[
c_i(t)
=S(t-a_1)c_i(a_1)
+\int_{a_1}^tS(t-s)H_i(s)\,\mathrm ds.
\]
Since $S(t)$ is contractive on $C(M)$,
\[
\sup_{t\in J}\|u_{i,\varepsilon}(t)-c_i(t)\|_\infty
\le L\|H_{i,\varepsilon}-H_i\|_{C^0(M\times J)}
\longrightarrow0.
\]
The interior Schauder estimate for the fixed heat operator, applied to $u_{i,\varepsilon}$, gives on smaller cylinders
\[
\sup_\varepsilon
\|u_{i,\varepsilon}\|_{C^{2+\beta,1+\beta/2}(Q')}
\le C.
\]
Compactness at any smaller H\"older exponent identifies the limit with $c_i$. Lower semicontinuity of the $\beta$-H\"older seminorms preserves the displayed bound. Thus \(c_i\in C^{2+\beta,1+\beta/2}\), and \eqref{eq:linear-form} holds classically.

For \eqref{eq:linear-form}, the coordinate coefficients are
\[
d^a=d_g^a-b_i^a,\qquad q=q_i.
\]
Applying the same estimate with $u=c_i$ and $f=0$ on a finite atlas gives the interior $C^{2+\beta,1+\beta/2}$ bound. We use backward parabolic cylinders ending at times at most $a+L$. The estimates at the upper endpoint follow by limits from earlier times. All bounds on $[b,a+L]$ depend only on the previously controlled norms, the fixed data and the chosen time separations.

\par\smallskip
\noindent\emph{Step 3. Higher derivatives.}
We argue by induction. On a larger time interval $J$, assume
\[
\|c\|_{C^{k+1+\gamma,(k+1+\gamma)/2}(M\times J)}
\le C_k,
\qquad k\ge1,\quad 0<\beta<\gamma<1.
\]
Set
\[
L_i=\partial_t-a^{ab}\partial_{ab}-d^a\partial_a+q_i.
\]
For $|\eta|=k$, formal differentiation of $L_ic_i=0$ gives
\[
L_i(D^\eta c_i)=R_{i,\eta},
\]
where
\begin{align}
R_{i,\eta}=
\sum_{0<\nu\le\eta}\binom{\eta}{\nu}
(D^\nu a^{ab})D^{\eta-\nu}\partial_{ab}c_i+\sum_{0<\nu\le\eta}\binom{\eta}{\nu}
(D^\nu d^a)D^{\eta-\nu}\partial_ac_i
-\sum_{0<\nu\le\eta}\binom{\eta}{\nu}
(D^\nu q_i)D^{\eta-\nu}c_i.\nonumber
\end{align}
We set $R_{i,0}=0$. The induction hypothesis and elliptic regularity give
\[
\begin{aligned}
\sup_{t\in J}\|\partial_tc(t)\|_{C^{k-1,\gamma}}
&\le C_k,\\
\|c(t)-c(s)\|_{C^{k-1,\beta}}
&\le C_k|t-s|,\\
\|U(t)-U(s)\|_{C^{k+1,\beta}}
&\le C\|c(t)-c(s)\|_{C^{k-1,\beta}}
\le C_k|t-s|.
\end{aligned}
\]
Moreover,
\[
\|D^kc(t)-D^kc(s)\|_\infty
\le C_k|t-s|^{(1+\gamma)/2}.
\]
Together, they give
\[
\|R_{i,\eta}\|_{C^{\beta,\beta/2}}\le C_k.
\]

To study the differentiation, write $\eta=\xi+e_\ell$ and set
\[
v=D^\xi c_i,\qquad
v_h=\delta_hv
:=\frac{v(t,x+he_\ell)-v(t,x)}{h}.
\]
Here $|\xi|=k-1$, so $v\in C^{2+\gamma,1+\gamma/2}$ by the induction hypothesis. We obtain
\[
(v_h)_t-a_h^{ab}(v_h)_{ab}-d_h^a(v_h)_a+q_{i,h}v_h
=G_h,
\]
where the coefficients with subscript $h$ are evaluated at $(t,x+he_\ell)$, and
\[
G_h=\delta_hR_{i,\xi}
+(\delta_ha^{ab})v_{ab}
+(\delta_hd^a)v_a-(\delta_hq_i)v.
\]
On smaller coordinate cylinders, the preceding bounds give
\[
\sup_{0<|h|<h_0}
\left(
\|v_h\|_{C^0(Q)}
+\|G_h\|_{C^{\beta,\beta/2}(Q)}
\right)\le C_k.
\]
Indeed, $\partial_\ell R_{i,\xi}$ involves at most $k+1$ spatial derivatives of $c$. The interior Schauder estimate therefore yields
\[
\sup_{0<|h|<h_0}
\|v_h\|_{C^{2+\beta,1+\beta/2}(Q')}
\le C_k,
\qquad Q'\Subset Q.
\]
Passing to the limit $h\to0$, using compactness and the lower semicontinuity of the H\"older seminorms, we get
\[
D^\eta c_i\in C^{2+\beta,1+\beta/2}(Q'),
\qquad
L_i(D^\eta c_i)=R_{i,\eta}.
\]

The mixed time derivatives are controlled by
\begin{align}
\partial_t^{\ell+1}c_i
=\Delta_g\partial_t^\ell c_i-\sum_{r=0}^{\ell}\binom{\ell}{r}
\left[
(\partial_t^rb_i)\cdot\nabla_g\partial_t^{\ell-r}c_i
+(\partial_t^rq_i)\partial_t^{\ell-r}c_i
\right],\nonumber
\end{align}
together with, for $\ell\ge1$,
\[
-\Delta_g\partial_t^\ell U_i
=\sum_j a_{ij}\partial_t^\ell c_j,
\qquad
\int_M\partial_t^\ell U_i\,\mathrm dV_g=0.
\]
Thus elliptic regularity and the preceding spatial estimates give, on a smaller time interval $J'$,
\[
\|c\|_{C^{k+2+\beta,(k+2+\beta)/2}(M\times J')}
\le C_{k+1},
\]
which advances the induction by one spatial order. For any prescribed $m$ and $\alpha\in(0,1)$, the above arguments give
\[
\sup_{a+\delta\le t\le a+L}\sum_i
\left(
\|c_i(t)\|_{C^{m,\alpha}}
+\|U_i[c(t)]\|_{C^{m+2,\alpha}}
\right)
\le C(B,L,\delta,m,\alpha).
\]
The constant also depends on the fixed equation and geometric data.

\end{proof}

\begin{remark}\label{rem:smoothing}
The proof of Lemma~\ref{lem:smoothing} also gives all finite parabolic derivative bounds on smaller time intervals. The constants may depend on the fixed geometric and equation data, but they are independent of the initial time $a$ and of a positive lower bound for $c$.
\end{remark}

\begin{lemma}\label{lem:local}
For smooth positive initial densities of the prescribed masses, there exists a unique maximal smooth positive solution on $[0,T_{\max})$. If $T_{\max}<\infty$, then
\[
\lim_{t\uparrow T_{\max}}\sum_i\norm{c_i(t)}_\infty=\infty.
\]
\end{lemma}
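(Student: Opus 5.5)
I will build the solution by a contraction argument in the mild formulation \eqref{eq:mild}, upgrade it to a smooth solution with Lemma~\ref{lem:smoothing}, and then extend it maximally.

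\emph{Step 1: local existence.} Fix $B>2\max_i\norm{c_{i,0}}_\infty$. Consider the closed set $X_L$ of $c\in C([0,L];C(M)^n)$ with $\int_M c_i(t)\dV=\rho_i$, $c(0)=c_0$ and $\norm c_{C_tC_x}\le B$. On $X_L$ define $\Phi(c)$ by the right-hand side of \eqref{eq:mild} with $a=0$. Since $\G$ maps $L^\infty$ into $W^{2,q}$ for every $q<\infty$ by \eqref{eq:ell-est}, Sobolev embedding in dimension two gives
\[
\norm{\nabla_g U_i[c]}_\infty\le C(\norm c_\infty+1),
\qquad
\norm{\nabla_g(U_i[c]-U_i[\widetilde c])}_\infty\le C\norm{c-\widetilde c}_\infty .
\]
Hence $\norm{c_ib_i[c]-\widetilde c_ib_i[\widetilde c]}_\infty\le C(B)\norm{c-\widetilde c}_\infty$. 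The third estimate in \eqref{eq:heat} then gives
\[
\norm{\Phi(c)(t)-S(t)c_0}_\infty\le C(B)\sqrt L,
\qquad
\norm{\Phi(c)-\Phi(\widetilde c)}_{C_tC_x}\le C(B)\sqrt L\,\norm{c-\widetilde c}_{C_tC_x}.
\]
Continuity in time of the Duhamel integral follows from the same integrable kernel bounds, the heat semigroup is $C(M)$-continuous on smooth data, and both terms preserve mass because $\int_M S(r)\Div X\dV=0$. For small $L=L(B)$, $\Phi$ is therefore a contraction on $X_L$, which yields a mild solution. Lemma~\ref{lem:smoothing} and Remark~\ref{rem:smoothing} make it smooth for $t>0$. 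Smoothness up to $t=0$ follows because $c_0$ is smooth: I would repeat the Schauder argument of Step~2 with initial-boundary compatibility, or note that $S(t)c_0\to c_0$ in every $C^k$.

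\emph{Step 2: positivity and uniqueness.} A classical solution satisfies the nondivergence form \eqref{eq:linear-form} with bounded $b_i,q_i$. Setting $v=e^{\lambda t}c_i$ with $\lambda\ge\norm{q_i}_\infty$, the minimum principle gives
\[
c_i(t)\ge e^{-\lambda t}\min_M c_{i,0}>0 .
\]
For uniqueness, two smooth solutions with the same data are both mild solutions. Indeed, for a smooth solution of \eqref{eq:flow} the Duhamel formula can be checked by differentiating $r\mapsto S(t-r)c(r)$. Both solutions are bounded on any common compact interval, so the contraction estimate above, iterated over short subintervals, forces them to agree.

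\emph{Step 3: maximal solution and blow-up.} Define $T_{\max}$ as the supremum of existence times; uniqueness glues the local solutions into a maximal one. Suppose $T_{\max}<\infty$ but the limit fails, so $\liminf_{t\uparrow T_{\max}}\sum_i\norm{c_i(t)}_\infty<\infty$. The main obstacle is converting this liminf into a restart. The local time $L(B)$ depends only on the sup bound $B$ of the datum, not on smoothness or positivity, and the restart datum $c(t_k)$ is smooth and positive. Choose $t_k\uparrow T_{\max}$ with $\norm{c(t_k)}_\infty\le B_0$. Step 1, started from $c(t_k)$, gives a solution on $[t_k,t_k+L(2B_0))$; by uniqueness it coincides with $c$ and extends it beyond $T_{\max}$ once $t_k>T_{\max}-L(2B_0)$. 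This contradiction proves that $\sum_i\norm{c_i(t)}_\infty\to\infty$.
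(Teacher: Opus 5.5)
Your outline follows the paper's proof. It uses the same contraction in $C([0,L];C(M)^n)$ driven by the $t^{-1/2}$ bound in \eqref{eq:heat}, Lemma~\ref{lem:smoothing} for $t>0$, positivity by comparison with $e^{-\lambda t}\min_M c_{i,0}$, and a restart whose length depends only on the sup bound. Your uniqueness argument is, if anything, more explicit than the paper's, which records only uniqueness of the fixed point in $\mathcal B$. You show that smooth solutions satisfy the Duhamel formula and then iterate the Lipschitz estimate over short subintervals.

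The one step that does not close as written is smoothness up to $t=0$, which the statement asserts. Your second option, $S(t)c_0\to c_0$ in every $C^k$, controls only the linear term. The real issue is the Duhamel integral $\int_0^tS(t-r)\Div F(c(r))\,\mathrm dr$: its regularity as $t\downarrow0$ depends on the regularity of $F(c(r))$ near $r=0$, which is precisely what is not yet known.

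Your first option can be made to work, but not by literally repeating Step~2 of Lemma~\ref{lem:smoothing}. That step uses interior Schauder estimates on cylinders separated from the initial time, with constants depending on $\delta$. You would instead need the Schauder estimate for the Cauchy problem up to $t=0$ with datum $c_0\in C^{2+\beta}$, followed by a bootstrap for all mixed derivatives at $t=0$. There is no lateral boundary, so no compatibility conditions enter; the ``initial-boundary compatibility'' you mention is not the obstacle.

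The paper sidesteps all of this by rerunning the same contraction in $C([0,T_s];H^s(M)^n)$ for an integer $s>2$. It uses $\norm{S(t)\Div X}_{H^s}\le C_st^{-1/2}\norm{X}_{H^s}$ and the algebra property of $H^s$, and identifies that solution with the continuous one by uniqueness. It then reads off $c\in C^j([0,T_s];H^{s-2j})$ from $\partial_tc=\Delta_gc-\Div F(c)$ together with Sobolev embedding.
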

\begin{proof}
\par\smallskip
\noindent\emph{Step 1. Construction of a mild solution in short time.} We apply Banach fixed point theorem to achieve this goal. Set
\[
b_i[c]=\nabla_g\log h_i+\nabla_gU_i[c],\qquad F_i(c)=c_i b_i[c].
\] 
By the Sobolev embedding of dimension two and the standard elliptic $W^{2,q}$ estimates on the closed surface $M$ \cite[Chapter~9]{GT}, we get for fixed $q>2$

\begin{align}
\|b[c]-b[d]\|_\infty
\le C\|U[c]-U[d]\|_{W^{2,q}}\le C\|c-d\|_q\le C\|c-d\|_\infty.\nonumber
\end{align}

Here $c$ and $d$ have the same prescribed masses. The product identity $F_i(c)-F_i(d)=(c_i-d_i)b_i[c]+d_i(b_i[c]-b_i[d])$ therefore yields, on any fixed bounded set of densities,

\[
\norm{b[c]-b[d]}_\infty\le C\norm{c-d}_\infty,\qquad
\norm{F(c)-F(d)}_\infty\le L_B\norm{c-d}_\infty.
\]
Take $B=\norm{c_0}_\infty+1$. In $C([0,T];C(M)^n)$ consider the closed set
\[
\mathcal B={\Bigg\{}c:c(0)=c_0,\ \int c_i(t)\dV=\rho_i,\
\sup_{0\le t\le T}\norm{c(t)}_\infty\le B{\Bigg\}}.
\]
It is nonempty because it contains $S(t)c_0$. It is complete in the supremum distance. Define
\[
\Phi(c)_i(t)=S(t)c_{i,0}-\int_0^t S(t-s)\Div F_i(c(s))\,\mathrm ds.
\]

Recall the time-space norm introduced at the beginning of Section~\ref{sec:local} (p.~\pageref{not:time}). On $[0,T]$, it is
\[
\|v\|_{C_tC_x}=\sup_{0\le t\le T}\max_{i\in I}\|v_i(t)\|_\infty.
\]
Let $M_B$ bound $\norm{F(c)}_\infty$ on the ball. By \eqref{eq:heat},
\[\norm{\Phi(c)-S(\cdot)c_0}_{C_tC_x}\le2CM_B\sqrt T\text{ and }\norm{\Phi(c)-\Phi(d)}_{C_tC_x}\le2CL_B\sqrt T\norm{c-d}_{C_tC_x}.\]
Choose $T\le1$ so that $2CM_B\sqrt T\le1$ and $2CL_B\sqrt T\le\tfrac12$. By self-adjointness and $S(r)1=1$,
\[
\int_M S(r)\Div X\,\dV
=-\int_M\langle X,\nabla_gS(r)1\rangle_g\,\dV=0.
\]
Consequently, $\int_M\Phi(c)_i(t)\,\dV=\rho_i$ and $\|\Phi(c)(t)\|_\infty \le\|c_0\|_\infty+2CM_B\sqrt T\le B$. This implies that $\Phi:\mathcal{B}\to\mathcal{B}$. To verify the contraction, we write
\[
W_c(t):=\int_0^tS(t-s)\Div F(c(s))\,\mathrm ds.
\]
The estimate
\[
\|W_c(t)\|_\infty
\le CM_B\int_0^t(t-s)^{-1/2}\,\mathrm ds
=2CM_B\sqrt t
\]
and strong continuity of $S(t)$ on $C(M)$ give
\[
\|\Phi(c)(t)-c_0\|_\infty
\le\|S(t)c_0-c_0\|_\infty+2CM_B\sqrt t
\longrightarrow0\mbox{ as }t\to0.
\]
For $\varepsilon>0$, set
\[
W_{c,\varepsilon}(t)
:=\int_0^{\max\{t-\varepsilon,0\}}
S(t-s)\Div F(c(s))\,\mathrm ds.
\]
Since the semigroup time stays away from zero, $W_{c,\varepsilon}\in C([0,T];C(M)^n)$ by heat-kernel smoothing and strong semigroup continuity. Moreover,
\begin{align}
\sup_{0\le t\le T}
\|W_c(t)-W_{c,\varepsilon}(t)\|_\infty
\le CM_B\sup_{0\le t\le T}
\int_{\max\{t-\varepsilon,0\}}^t(t-s)^{-1/2}\,\mathrm ds\le2CM_B\sqrt\varepsilon.\nonumber
\end{align}
Thus $W_c$, and hence $\Phi(c)$, belongs to $C([0,T];C(M)^n)$. This proves $\Phi(\mathcal B)\subseteq\mathcal B$. Furthermore,
\[
\begin{aligned}
\|\Phi(c)-\Phi(d)\|_{C_tC_x}
&\le CL_B\sup_{0\le t\le T}
\int_0^t(t-s)^{-1/2}\|c(s)-d(s)\|_\infty\,\mathrm ds\\
&\le2CL_B\sqrt T\,\|c-d\|_{C_tC_x}
\le\tfrac12\|c-d\|_{C_tC_x}.
\end{aligned}
\]
Therefore, $\Phi$ is a contraction. Banach's fixed point theorem therefore gives a unique fixed point in $\mathcal B$. The unique existence result  follows.

\par\smallskip
\noindent\emph{Step 2. Smoothness and positivity.}
Smoothness at positive times follows from Lemma~\ref{lem:smoothing} on each compact subinterval.

For regularity up to $t=0$, fix an integer $s>2$. We construct the same solution in $C([0,T_s];H^s(M)^n)$ for a sufficiently small $T_s>0$. Elliptic regularity \cite[Section~6.3.1, Theorem~2]{Evans} gives
\[
\|\nabla_gU[c]\|_{H^{s+1}}\le C_s\|c\|_{H^s},
\qquad
\|\nabla_g(U[c]-U[d])\|_{H^{s+1}}
\le C_s\|c-d\|_{H^s}.
\]
Since $H^s(M)$ is an algebra, we get the identity
\[
F_i(c)-F_i(d)
=(c_i-d_i)b_i[c]+d_i\bigl(b_i[c]-b_i[d]\bigr)
\]
and the inequalities
\[
\begin{aligned}
\|F(c)\|_{H^s}
&\le C_s\|c\|_{H^s}(1+\|c\|_{H^s}),\\
\|F(c)-F(d)\|_{H^s}
&\le C_s\bigl(1+\|c\|_{H^s}+\|d\|_{H^s}\bigr)
\|c-d\|_{H^s}.
\end{aligned}
\]
Here vector-field Sobolev norms are defined in a fixed finite atlas.

We next establish the semigroup estimate needed for the contraction. Write $f_\ell=\langle f,e_\ell\rangle$, where $-\Delta_ge_\ell=\lambda_\ell e_\ell$ is the orthonormal eigenbasis used to define the Sobolev norms. For $0<t\le1$,
\begin{align}
\|S(t)f\|_{H^s}^2=\sum_\ell(1+\lambda_\ell)^s
e^{-2t\lambda_\ell}|f_\ell|^2\le
\sup_{\lambda\ge0}\bigl((1+\lambda)e^{-2t\lambda}\bigr)
\|f\|_{H^{s-1}}^2
\le Ct^{-1}\|f\|_{H^{s-1}}^2,\nonumber
\end{align}
because
\(
(1+\lambda)e^{-2t\lambda}
\le1+(2et)^{-1}\le Ct^{-1}\).
Moreover, the coordinate formula $\Div X=|g|^{-1/2}\partial_a(|g|^{1/2}X^a)$ gives
\[
\|\Div X\|_{H^{s-1}}\le C_s\|X\|_{H^s(TM)}.
\]
Taking square roots and combining these estimates yields
\[
\|S(t)\Div X\|_{H^s}
\le C_st^{-1/2}\|X\|_{H^s(TM)}.
\]
The estimate extends from smooth vector fields to $H^s(TM)$ by density.

Set
\[
B_s=\|c_0\|_{H^s}+1,\qquad
\mathcal X_s=C([0,T_s];H^s(M)^n),
\qquad
\|c\|_{\mathcal X_s}
=\sup_{0\le t\le T_s}\|c(t)\|_{H^s},
\]
and consider the closed subset
\[
\mathcal B_s=
\left\{
c\in\mathcal X_s:
c(0)=c_0,\quad
\int_Mc_i(t)\,\dV=\rho_i,\quad
\|c\|_{\mathcal X_s}\le B_s
\right\}.
\]
It contains $S(\cdot)c_0$. Let $M_s$ and $L_s$ be the bound and Lipschitz constant for $F$ on the $H^s$ ball of radius $B_s$. For the same map $\Phi$ as in Step~1, the preceding estimates give
\[
\begin{aligned}
\|\Phi(c)-S(\cdot)c_0\|_{\mathcal X_s}
&\le C_sM_s\sup_{0\le t\le T_s}
\int_0^t(t-r)^{-1/2}\,\mathrm dr
\le2C_sM_s\sqrt{T_s},\\
\|\Phi(c)-\Phi(d)\|_{\mathcal X_s}
&\le2C_sL_s\sqrt{T_s}\,
\|c-d\|_{\mathcal X_s}.
\end{aligned}
\]
Arguing as in Step~1 in $\mathcal X_s$, we obtain a fixed point of $\Phi$ in $\mathcal B_s$ for sufficiently small $T_s>0$. By uniqueness, it agrees with the previously constructed solution, so $c\in C([0,T_s];H^s(M)^n)$.

Put $N(c)=\Delta_gc-\Div F(c)$. Since $\partial_tc=N(c)$ in distributions and $N(c)\in C([0,T_s];H^{s-2})$, we have
\[
c(t)=c_0+\int_0^tN(c(r))\,\mathrm dr
\quad\text{in }H^{s-2},
\]
and hence $c\in C^1([0,T_s];H^{s-2})$. The product and elliptic estimates make $N:H^r\to H^{r-2}$ smooth on the space of functions with the prescribed masses. Iterating at lower Sobolev orders gives $c\in C^j([0,T_s];H^{s-2j})$ for $s-2j>2$. For any fixed $m,j$, choosing $s-2j>\max\{m+1,2\}$ and applying Sobolev embedding yields $\partial_t^jc\in C([0,T_s];C^m)$.
Together with Lemma~\ref{lem:smoothing} on positive-time intervals, this proves smoothness up to $t=0$.

We prove positivity by the weak parabolic maximum principle \cite[Section~7.1.4, Theorem~9]{Evans}, giving the comparison argument directly. Fix $T$ within the existence interval and choose $Q\ge\max\{0,\sup_{i,[0,T]\times M}q_i\}$. Set $v_i(t)=(\min_M c_{i,0})e^{-Qt}$ and $P_i=\partial_t-\Delta_g+b_i\cdot\nabla_g+q_i$. Then $P_ic_i=0$ and $P_iv_i=(q_i-Q)v_i\le0$, so $w_i=c_i-v_i$ satisfies $P_iw_i\ge0$ and $w_i(0)\ge0$. Choose $K=1+\sup_{i,[0,T]\times M}|q_i|$ and put $z_i=e^{-Kt}w_i$. It follows that
\[
\partial_tz_i-\Delta_gz_i+b_i\cdot\nabla_gz_i+(q_i+K)z_i\ge0,
\qquad q_i+K\ge1.
\]
If $z_i$ had a negative minimum on $[0,T]\times M$, it would be attained at $(t_*,x_*)$ with $t_*>0$. At that point, $\partial_tz_i\le0$, $\nabla_gz_i=0$ and $\Delta_gz_i\ge0$, so the left-hand side would be strictly negative, a contradiction. There is no spatial boundary because $M$ is closed. Hence $z_i\ge0$ and $c_i\ge v_i>0$ throughout $[0,T]\times M$.

\par\smallskip
\noindent\emph{Step 3. Continuation.}
Finally, the construction restarts with a time length depending only on a supremum bound and the fixed data. If a sequence of times tending to a finite $T_{\max}$ had bounded suprema, restarting at those times would extend the unique solution beyond $T_{\max}$. This proves the stated continuation criterion.
\end{proof}

\section{Uniform estimates and global existence}\label{sec:global}
Throughout this section, the coupling matrix satisfies \eqref{hyp:A} and the masses lie in the strictly subcritical region 
\[
\Lambda_J(\rho)>0\qquad(\varnothing\ne J\subseteq I).
\]
We prove that the maximal solution from Lemma~\ref{lem:local} is global in time, has uniform spatial regularity and stays uniformly positive away from the initial time. The proof proceeds from entropy coercivity to uniform $L^p$ and supremum bounds. We first state the logarithmic HLS estimate for a general matrix $B$, since it will also be applied to a rescaling of $A$.

\begin{lemma}\label{lem:HLS}
Let $B=(b_{ij})$ be a symmetric matrix with nonnegative entries and let $\rho_i>0$. Define $W_B$ by \eqref{eq:energy} with $A$ replaced by $B$. Suppose that
\[
\Lambda_J^B(\rho):=8\pi\sum_{i\in J}\rho_i
-\sum_{i,j\in J}b_{ij}\rho_i\rho_j>0
\qquad(\varnothing\ne J\subseteq I).
\]
There exists a constant $C_B=C(M,g,B,\rho)>0$, independent of $c$, such that
\[
H(c)-W_B(c)\ge-C_B
\]
for every $c=(c_i)_{i\in I}$ satisfying
\[
c_i\ge0\ \text{a.e. in }M,\qquad
\int_Mc_i\dV=\rho_i,\qquad
\int_Mc_i|\log c_i|\dV<\infty
\quad(i\in I).
\]
\end{lemma}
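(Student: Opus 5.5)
This bound is the system logarithmic HLS inequality of Shafrir and Wolansky \cite{SW} on a closed surface, in the normalization of \eqref{eq:energy}. The plan is to reduce to their theorem rather than reprove it.

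\textbf{Step 1: normalize to probability densities.} Write $c_i=\rho_ip_i$, so each $p_i\,\dV$ is a probability measure since $\operatorname{vol}_g(M)=1$. Then
\[
H(c)=\sum_i\rho_i\int p_i\log p_i\dV+\sum_i\rho_i\log\rho_i .
\]
Expanding $W_B$ gives
\[
W_B(c)=\tfrac12\sum_{i,j}b_{ij}\rho_i\rho_j\iint G(x,y)p_i(x)p_j(y)\dV\dV+\text{(terms with a constant factor)}.
\]
Every term containing a constant factor vanishes, because $\int G(x,y)\dV(x)=0$. So up to an additive constant, $H-W_B$ equals the Shafrir--Wolansky functional
\[
\sum_i\rho_i\int p_i\log p_i-\tfrac12\sum_{i,j}b_{ij}\rho_i\rho_j\iint G\,p_i\otimes p_j .
\]

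\textbf{Step 2: check the hypotheses.} By \eqref{eq:singularity}, $G=-\frac1{2\pi}\log d_g+R$ with $R$ bounded. The $R$ part contributes at most $\frac12\|R\|_\infty\sum b_{ij}\rho_i\rho_j$, and Lemma~\ref{lem:energy-finite} justifies absolute convergence of all integrals. What remains is the log-kernel functional. The condition $\Lambda^B_J(\rho)>0$ for all $J$ is exactly the Shafrir--Wolansky condition for a lower bound on a compact surface, with $8\pi$ as the constant because the kernel is $-\frac1{2\pi}\log$ and the manifold is compact.

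\textbf{Step 3: reduce to smooth positive densities.} Establish the inequality first for smooth positive $p_i$. Extend it to finite-entropy densities by truncation and mollification: entropy converges, and the interaction converges by the uniform entropy-kernel estimate from the proof of Lemma~\ref{lem:energy-finite}.

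\textbf{Main obstacle.} The main obstacle is making sure the cited result really applies on a general closed surface with the exact constants $8\pi$ and all subsystems $J$. This matters because the Euclidean statement involves $\mathbb R^2$ or $S^2$.

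If a direct citation is unavailable, I would use localization. Take a partition of unity subordinate to small geodesic balls, in which the metric is nearly Euclidean. In each ball, apply the Euclidean system HLS inequality with a slightly enlarged matrix $(1+\varepsilon)B$. This is still strictly subcritical for small $\varepsilon$, since there are finitely many open conditions. The cross-ball interactions have bounded kernels, which gives the loss. Gluing the local inequalities is done with the entropy inequality $\int p\psi\le\int p\log p+\log\int e^\psi$ from the proof of Lemma~\ref{lem:energy-finite}.

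The delicate case is mass concentrating in several balls at once. It is handled by subadditivity of $\Lambda_J$ over subsets: the masses $\rho_i^{(k)}$ in each ball satisfy the condition whenever $\rho$ does. This holds because $\Lambda^B_J(t\rho)$ is concave in $t\rho$ along each coordinate and the condition is imposed for every $J$.
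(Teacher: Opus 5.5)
Your proposal is correct and follows the paper's route. The paper also invokes Shafrir--Wolansky \cite[Proposition~2.1]{SW}, which is already formulated on compact manifolds, with $m=2$, $M_i=\rho_i$ and $a_{ij}=b_{ij}/(4\pi)$, so that their condition becomes $\Lambda_J^B(\rho)/(4\pi)>0$; it then removes the $\rho$-terms by the zero-mean normalization of $G$ and absorbs the bounded part at cost $\frac12\|R\|_\infty\sum_{i,j}b_{ij}\rho_i\rho_j$. Because that result applies directly to finite-entropy densities on a closed surface, your approximation step and your Euclidean localization fallback are not needed.
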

\begin{proof}

We apply Shafrir and Wolansky \cite[Proposition 2.1]{SW}. In their notation, the dimension is $m$, the masses are $M_i$ and the interaction coefficients are $a_{ij}$. Their strict subsystem condition is
\[
m\sum_{i\in J}M_i-\sum_{i,j\in J}a_{ij}M_iM_j>0
\qquad(\varnothing\ne J\subseteq I).
\]
Here $J$ ranges over all nonempty subsets, including the full index set. Taking $m=2$, $M_i=\rho_i$ and $a_{ij}=b_{ij}/(4\pi)$ gives
\[
2\sum_{i\in J}\rho_i
-\frac1{4\pi}\sum_{i,j\in J}b_{ij}\rho_i\rho_j
=\frac{\Lambda_J^B(\rho)}{4\pi}>0.
\]
Thus their hypothesis holds for every subsystem. The inequality gives
\[
H(c)+\frac1{4\pi}\sum_{i,j}b_{ij}
\iint c_i(x)\log d_g(x,y)c_j(y)\dV(x)\dV(y)\ge-C.
\]
By symmetry and the zero-mean normalization of $G$, $\int(c_i-\rho_i)\G(c_j-\rho_j)\dV=\int c_i\G c_j\dV$. Formula \eqref{eq:singularity} changes the preceding expression by at most $\frac12\norm R_\infty\sum_{i,j}b_{ij}\rho_i\rho_j$. This proves the claim.
\end{proof}

\begin{proposition}\label{prop:coercivity}

Assume \eqref{hyp:A} and $\Lambda_J(\rho)>0$ for every nonempty $J\subseteq I$. There exist $\varepsilon=\varepsilon(A,\rho)>0$ and $C=C(M,g,A,\rho,h)>0$, independent of $c$, such that every density satisfying the mass and entropy conditions in Lemma~\ref{lem:HLS} obeys

\begin{equation}\label{eq:coercivity}
\F(c)\ge\varepsilon H(c)-C.
\end{equation}
\end{proposition}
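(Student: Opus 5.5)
The plan is to apply Lemma~\ref{lem:HLS} to a rescaled matrix. The paper already hints at this: ``it will also be applied to a rescaling of $A$''. For $\theta\in(0,1)$, set $B=A/\theta$. Then
\[
\Lambda_J^B(\rho)=8\pi\sum_{i\in J}\rho_i-\theta^{-1}\sum_{i,j\in J}a_{ij}\rho_i\rho_j .
\]
Since there are only finitely many nonempty $J\subseteq I$, and each $\Lambda_J(\rho)>0$, continuity in $\theta$ gives some $\theta_0<1$ such that $\Lambda_J^{A/\theta_0}(\rho)>0$ for every $J$. Concretely, we need
\[
\theta_0>\max_J\frac{\sum_{i,j\in J}a_{ij}\rho_i\rho_j}{8\pi\sum_{i\in J}\rho_i},
\]
and the right-hand side is $<1$ by hypothesis. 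The matrix $A/\theta_0$ is still symmetric and nonnegative, so Lemma~\ref{lem:HLS} yields
\[
H(c)-\theta_0^{-1}W_A(c)\ge -C_B,
\]
since $W_B=\theta_0^{-1}W_A$ by linearity of $W$ in the matrix.

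Next I split the free energy. Multiplying the last inequality by $\theta_0$ gives $-W_A(c)\ge-\theta_0H(c)-\theta_0C_B$. Hence
\[
H(c)-W_A(c)\ge(1-\theta_0)H(c)-\theta_0C_B .
\]
The external-potential term is bounded below by $-\sum_i\rho_i\max_M|\log(\rho_ih_i)|$, because $c_i\ge0$ has mass $\rho_i$ and $h_i$ is smooth and positive. All the terms are finite by Lemma~\ref{lem:energy-finite}, so the rearrangement is legitimate. Setting $\varepsilon=1-\theta_0$, which depends only on $A$ and $\rho$, and collecting the constants proves \eqref{eq:coercivity}. One could also take $\theta_0$ as the midpoint between the displayed maximum and $1$, to make the dependence explicit.

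I expect no serious obstacle, provided Lemma~\ref{lem:HLS} is available. The only points needing care are these:
\begin{itemize}
\item[$(i)$] The strict inequality must survive the rescaling uniformly over all subsets $J$. This works because there are finitely many subsets and every $\Lambda_J(\rho)>0$. When $\sum_{J}a_{ij}\rho_i\rho_j=0$, the condition holds trivially.
\item[$(ii)$] The sign bookkeeping: $W_A$ enters $\F$ with a minus sign, and $\theta_0$ is positive.
\item[$(iii)$] The hypothesis of Lemma~\ref{lem:HLS} requires $\int c_i|\log c_i|<\infty$, which is exactly the assumed entropy condition.
\end{itemize}
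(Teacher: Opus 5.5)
Your proposal is correct and is essentially the paper's own proof. The paper fixes $\varepsilon$ below $\min_J \Lambda_J(\rho)/(8\pi\sum_{i\in J}\rho_i)$, applies Lemma~\ref{lem:HLS} to $\widetilde A=A/(1-\varepsilon)$ (your $\theta_0=1-\varepsilon$), and then uses the splitting $H-W_A=\varepsilon H+(1-\varepsilon)(H-W_{\widetilde A})$ together with the bound on the external-potential term, exactly as you do.
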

\begin{proof}
For each nonempty $J$, write $S_J=\sum_{i\in J}\rho_i$ and $Q_J=\sum_{i,j\in J}a_{ij}\rho_i\rho_j$. Since the number of subsets is finite and $S_J>0$, choose
\[
0<\varepsilon<\min_{\varnothing\ne J\subseteq I}
\frac{8\pi S_J-Q_J}{8\pi S_J}\le1.
\]
Then $Q_J/(1-\varepsilon)<8\pi S_J$. Set $\widetilde A=A/(1-\varepsilon)$ and apply Lemma~\ref{lem:HLS} to this matrix. Since $W_{\widetilde A}=W_A/(1-\varepsilon)$,
\[
H-W_A=\varepsilon H+(1-\varepsilon)(H-W_{\widetilde A})\ge\varepsilon H-C.
\]
The external-potential energy in \eqref{eq:energy} has absolute value at most $\sum_i\rho_i\norm{\log(\rho_i h_i)}_\infty$.
\end{proof}

We apply the following estimates to the maximal solution constructed in Lemma~\ref{lem:local}.

\begin{lemma}\label{lem:smallmass}

Assume \eqref{hyp:A} and $\Lambda_J(\rho)>0$ for every nonempty $J\subseteq I$. Along the maximal solution from Lemma~\ref{lem:local}, $H(c(t))\le C$ uniformly for $t<T_{\max}$. For every $\delta>0$ there exists $r_\delta>0$ such that
\begin{equation}\label{eq:smallmass}
\sup_{0\le t<T_{\max}}\sup_{x\in M}
\sum_i\int_{B_{2r_\delta}(x)}c_i(t)\dV<\delta.
\end{equation}
\end{lemma}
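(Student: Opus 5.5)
The plan has two parts: a uniform entropy bound from the free energy, and then uniform small mass on balls via the entropy (de la Vallée-Poussin) inequality.

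\emph{Entropy bound.} By Lemma~\ref{lem:local}, the maximal solution is smooth and positive on $[0,T_{\max})$, so the dissipation identity \eqref{eq:dissipation} of Lemma~\ref{lem:variation} applies. Since $\D\ge0$, it gives $\F(c(t))\le\F(c_0)$ for all $t<T_{\max}$. The initial density is smooth and positive, so $\F(c_0)$ is finite by Lemma~\ref{lem:energy-finite}. For each $t$, the density $c(t)$ is smooth and positive with the prescribed masses, so it satisfies the hypotheses of Proposition~\ref{prop:coercivity}. That proposition then gives
\[
\varepsilon H(c(t))\le\F(c(t))+C\le\F(c_0)+C,
\]
so $H(c(t))\le C$ uniformly in $t$.

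\emph{Control of the positive part.} The negative part of $s\log s$ is bounded by $e^{-1}$ and the area is one. Hence the uniform entropy bound yields
\[
\sum_i\int c_i(\log c_i)_+\dV\le K
\]
with $K$ independent of $t<T_{\max}$.

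\emph{Small mass on balls.} Fix $\delta>0$ and a level $\lambda>1$. On any measurable set $E$,
\[
\int_E c_i\dV\le\lambda\operatorname{vol}_g(E)+\int_{\{c_i>\lambda\}}c_i\dV\le\lambda\operatorname{vol}_g(E)+\frac{1}{\log\lambda}\int c_i(\log c_i)_+\dV.
\]
Summing over $i$ and taking $E=B_{2r}(x)$, the volume bound $\operatorname{vol}_g(B_s(x))\le C_Vs^2$ from the proof of Lemma~\ref{lem:heat} gives
\[
\sum_i\int_{B_{2r}(x)}c_i\dV\le 4n\lambda C_Vr^2+\frac{K}{\log\lambda}.
\]
First choose $\lambda$ with $K/\log\lambda<\delta/2$. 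Then choose $r_\delta$ so that $4n\lambda C_Vr_\delta^2<\delta/2$. Both choices are independent of $x$ and $t$, which proves \eqref{eq:smallmass}.

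The only substantive input is the coercivity \eqref{eq:coercivity}, which is already established. The one point requiring care is that the bound must hold for every $t<T_{\max}$ and not merely almost every $t$. This is automatic here, because the solution is classical on $[0,T_{\max})$ and the monotonicity of $\F$ holds pointwise in time. No step is a genuine obstacle.
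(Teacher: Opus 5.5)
Your proposal is correct and follows the paper's proof essentially step for step: the dissipation identity together with the coercivity \eqref{eq:coercivity} gives the uniform entropy bound, and the level-set splitting $\int_E c_i\dV\le\lambda\operatorname{vol}_g(E)+(\log\lambda)^{-1}\int c_i(\log c_i)_+\dV$, combined with a quadratic volume bound for balls, gives \eqref{eq:smallmass}. The only difference is cosmetic: you use the global bound $\operatorname{vol}_g(B_s(x))\le C_Vs^2$ for all $s>0$ from the proof of Lemma~\ref{lem:heat}, whereas the paper uses a local version of this bound and additionally requires $r_\delta<r_*/2$.
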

\begin{proof}

From \eqref{eq:dissipation} and \eqref{eq:coercivity}, we obtain
\[
\varepsilon H(c(t))\le\F(c(t))+C\le\F(c_0)+C.
\]
Since $s\log s\ge-e^{-1}$ for $s\ge0$, there exists a constant $C=C(M,g,A,\rho,h,c_0)>0$, independent of $t$, such that 
\[
\sum_i\int_M(c_i(t)\log c_i(t))_+\dV\le C,
\qquad 0\le t<T_{\max},
\]
where $z_+=\max\{z,0\}$. For every measurable $E\subseteq M$, every $K>e$ and every $0\le t<T_{\max}$, it follows that

\[
\sum_i\int_E c_i\dV
\le nK\operatorname{vol}_g(E)+\frac1{\log K}
\sum_i\int_{\{c_i>K\}}c_i\log c_i\dV
\le nK\operatorname{vol}_g(E)+\frac C{\log K}.
\]

Choose $K>e$ so that $C/\log K<\delta/2$. There exist $r_*>0$ and $C_V>0$, depending only on $(M,g)$, such that
\[
\operatorname{vol}_g(B_s(x))\le C_Vs^2
\qquad(x\in M,\ 0<s\le r_*).
\]
Choose $0<r_\delta<\min\{r_*/2,\sqrt{\delta/(8nKC_V)}\}$. Then

\[
nK\sup_x\operatorname{vol}_g(B_{2r_\delta}(x))<\delta/2.
\]
\end{proof}

The following local interpolation estimate is independent of the subcriticality assumption.
\begin{lemma}\label{lem:GN}

Fix $p>1$. Let $B_{2r}=B_{2r}(x_0)$ be a sufficiently small geodesic ball and let $f,\eta\in C_c^\infty(B_{2r})$ satisfy $f\ge0$ and $0\le\eta\le1$. Define $v=\eta f^{p/2}$. Then, we get
\begin{equation}\label{eq:GN}
\int v^{2(p+1)/p}\dV\le C_p\left(\int_{B_{2r}}f\dV\right)
\int|\nabla_g v|^2\dV,
\end{equation}
with $C_p$ independent of the small radius and the center.
\end{lemma}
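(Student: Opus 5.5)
This estimate is the two-dimensional Gagliardo--Nirenberg inequality in the form $\|w\|_3^3\le C\|w\|_1\|\nabla w\|_2^2$ for compactly supported $w$. We apply it to $w=v^{2/p}$, which is supported in $B_{2r}$.

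First pass to Euclidean coordinates. For a sufficiently small ball, take geodesic normal coordinates centered at $x_0$. There the metric coefficients and $\sqrt{|g|}$ are uniformly comparable to the Euclidean ones, with constants independent of $x_0$ and $r\le r_*$ by compactness. It therefore suffices to prove the inequality on $\mathbb R^2$ for compactly supported functions.

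Next set $q=2(p+1)/p$, so that $v^q=v^2v^{2/p}$. H\"older's inequality then gives
\[
\int v^q\le \Bigl(\int v^{2/p}\cdot v^{2/p}\cdot\ldots\Bigr).
\]
A cleaner route is the Ladyzhenskaya-type estimate
\[
\int w^2\,u^2\le C\Bigl(\int |w|\Bigr)\Bigl(\int|\nabla u|^2\Bigr)\quad\text{when } |w|\lesssim |u|^{2/p}\ \text{etc.},
\]
but the most direct approach uses the Sobolev $L^1$ embedding $\|\phi\|_{L^2(\mathbb R^2)}\le C\|\nabla\phi\|_{L^1}$, valid for $W^{1,1}$ functions with compact support. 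Take $\phi=v^{(p+1)/p}=v\cdot v^{1/p}$. Then
\[
\int v^{q}=\|\phi\|_2^2\le C\Bigl(\int|\nabla\phi|\Bigr)^2=C\Bigl(\tfrac{p+1}{p}\Bigr)^2\Bigl(\int v^{1/p}|\nabla v|\Bigr)^2 .
\]
Applying Cauchy--Schwarz to the last integral yields
\[
\int v^q\le C_p\int v^{2/p}\int|\nabla v|^2 .
\]
Finally, $v^{2/p}=\eta^{2/p}f\le f$ because $0\le\eta\le1$. Hence $\int v^{2/p}\le\int_{B_{2r}}f$, which is exactly \eqref{eq:GN}.

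The only technical points concern regularity and uniformity. The function $\phi$ is Lipschitz, so the $L^1$-Sobolev inequality applies. If $p$ is small, $v^{1/p}$ might not be $C^1$ where $v=0$. To handle this, replace $v$ by $(v^2+\epsilon^2)^{1/2}-\epsilon$, or note that $v^{(p+1)/p}$ is still Lipschitz because $(p+1)/p>1$ and $v$ is Lipschitz; then pass to the limit. The expected main obstacle, though a mild one, is the uniformity of the constant in $x_0$ and $r$. This is handled by the comparability of the metric in normal coordinates on balls of radius below a fixed fraction of the injectivity radius. That comparability turns the Euclidean constant into a geometric one, and scaling plays no role because the inequality is scale-invariant in two dimensions.
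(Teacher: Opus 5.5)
Your final argument is the paper's proof: the two-dimensional $L^1$ Sobolev inequality $\|\phi\|_2\le C\|\nabla\phi\|_1$, made uniform through normal coordinates, applied to $\phi=v^{(p+1)/p}$, then Cauchy--Schwarz and $v^{2/p}=\eta^{2/p}f\le f$. Two small repairs are needed: drop the opening suggestion to apply $\|w\|_3^3\le C\|w\|_1\|\nabla w\|_2^2$ to $w=v^{2/p}$, since the exponents match only when $p=2$; and for $1<p<2$, justify that $v=\eta f^{p/2}$ is Lipschitz via $|\nabla f|^2\le C_f f$ for smooth $f\ge0$, which gives $|\nabla f^{p/2}|\le C f^{(p-1)/2}$, as the paper does.
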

\begin{proof}
For $w\in C_c^1(\mathbb R^2)$, the fundamental theorem of calculus in each coordinate gives
\[
|w(x_1,x_2)|\le\int_{\mathbb R}|\partial_1w(s,x_2)|\,\mathrm ds,
\qquad
|w(x_1,x_2)|\le\int_{\mathbb R}|\partial_2w(x_1,s)|\,\mathrm ds.
\]
Multiplication and integration yield $\norm w_2^2\le\norm{\partial_1w}_1\norm{\partial_2w}_1\le\norm{\nabla w}_1^2$.

For $w\in W^{1,1}_0(\mathbb R^2)$, choose $w_k\in C_c^\infty(\mathbb R^2)$ such that $w_k\to w$ in $W^{1,1}$. After passing to a subsequence, $w_k\to w$ almost everywhere. Fatou's lemma and $\nabla w_k\to\nabla w$ in $L^1$ give
\[
\norm w_2^2\le\liminf_{k\to\infty}\norm{w_k}_2^2
\le\lim_{k\to\infty}\norm{\nabla w_k}_1^2
=\norm{\nabla w}_1^2.
\]
We explain why its constant remains uniform in geodesic coordinates. Choose $0<r_*<\operatorname{inj}(M,g)$ sufficiently small. Since $M$ is compact and the metric is smooth, normal coordinates based on an orthonormal frame at any $x_0\in M$ satisfy
\[
\frac12|\xi|^2\le g_{ab}(y)\xi^a\xi^b\le2|\xi|^2,
\qquad \frac12\le\sqrt{\det g(y)}\le2
\quad(|y|<r_*).
\]
The constants are independent of $x_0$. Indeed, $g_{ab}(0)=\delta_{ab}$ and the coordinate coefficients depend smoothly on the base point, the orthonormal frame and $y$; compactness of the orthonormal frame bundle gives a common radius for these bounds. The inverse matrix has the same eigenvalue bounds. For $2r\le r_*$ and $\widehat w=w\circ\exp_{x_0}$, extended by zero to $\mathbb R^2$, we have
\[
\int_{B_{2r}}|w|^2\dV\le2\int_{\mathbb R^2}|\widehat w|^2\,dy,
\qquad
\int_{\mathbb R^2}|D\widehat w|\,dy
\le2\sqrt2\int_{B_{2r}}|\nabla_gw|\dV.
\]
Consequently the Euclidean inequality gives
\[
\int_{B_{2r}}|w|^2\dV
\le16\left(\int_{B_{2r}}|\nabla_gw|\dV\right)^2,
\]
uniformly in $r$ and $x_0$. The same volume-density bounds give $\operatorname{vol}_g(B_s(x_0))\le2\pi s^2$ for $s\le r_*$. Apply the preceding Sobolev inequality to $w=v^{1+1/p}$ and use Cauchy--Schwarz to obtain

\[
\int v^{2(p+1)/p}\dV
\le C_p\left(\int v^{1/p}|\nabla_gv|\dV\right)^2
\le C_p\int v^{2/p}\dV\int|\nabla_gv|^2\dV.
\]

The required Sobolev regularity is valid also for noninteger $p>1$. For the nonnegative smooth compactly supported $f$, Taylor's theorem in coordinates gives $|Df|^2\le C_f f$. Thus $|D(f^{p/2})|\le C_{p,f}f^{(p-1)/2}$ on $\{f>0\}$. Regularizing by $(f+\varepsilon)^{p/2}$ and passing to the limit shows that $v\in H^1_0(B_{2r})$. To justify the preceding chain rule, first use $v_N=\min\{v,N\}$.  The Sobolev chain rule places $v_N^{1+1/p}$ in $H^1_0\subset W^{1,1}_0$, while $v_N^{2/p}\le v^{2/p}\le f$ and $|\nabla v_N|\le|\nabla v|$. Monotone convergence gives \eqref{eq:GN}. This argument integrates powers of nonnegative functions and remains valid when $2/p<1$.
\end{proof}

\begin{proposition}\label{prop:Lp}
Assume \eqref{hyp:A} and $\Lambda_J(\rho)>0$ for every nonempty $J\subseteq I$, and let $c$ be the maximal solution from Lemma~\ref{lem:local}.

For every fixed $p>1$, the smooth positive solution satisfies
\[
\sup_{t<T_{\max}}\sum_i\int c_i(t)^p\dV\le C_p.
\]
\end{proposition}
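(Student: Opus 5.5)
We localize the standard $L^p$ energy estimate and use Lemma~\ref{lem:smallmass} to make the coupling terms absorbable. Fix $p>1$ and a cutoff radius $r=r_\delta$, with $\delta$ to be chosen depending on $p$, $A$ and $(M,g)$. Take a finite cover of $M$ by balls $B_r(x_k)$ with bounded overlap, independent of $t$, and cutoffs $\eta_k\in C_c^\infty(B_{2r}(x_k))$ with $\eta_k=1$ on $B_r(x_k)$ and $|\nabla_g\eta_k|\le C/r$.

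\textbf{Step 1: the localized identity.} Write the flow as $\partial_tc_i=\Delta_gc_i-\Div(c_i\nabla_g\log h_i)-\Div(c_i\nabla_gU_i)$. Test it with $\eta^2c_i^{p-1}$, where $\eta=\eta_k$. The diffusion term gives the good quantity
\[
-\frac{4(p-1)}{p^2}\int|\nabla_g(\eta c_i^{p/2})|^2\dV
\]
plus errors of the form $C_p\int|\nabla\eta|^2c_i^p\dV$. The $h_i$ drift gives, after Young's inequality, a small multiple of the good quantity plus $C\int_{B_{2r}}c_i^p\dV$. For the interaction drift, integrate by parts once more and use $-\Delta_gU_i=\sum_ja_{ij}(c_j-\rho_j)$. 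This yields
\[
\frac{p-1}{p}\sum_ja_{ij}\int\eta^2c_i^p(c_j-\rho_j)\dV
\]
plus a cross term $\int c_i^p\eta\nabla\eta\cdot\nabla U_i\dV$. The cross term is bounded by $C\int_{B_{2r}}c_i^p|\nabla U_i|\dV$. Since $\|\nabla U_i\|_q\le C$ only for $q<2$ (masses are fixed), we will bound this term by Hölder against $\|c_i^p\|$ in a slightly higher norm and interpolate, or handle it the same way as the main term below.

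\textbf{Step 2: absorbing the main term.} This is the key step and the main obstacle. Discard $-\rho_j$, which has the favorable sign. By Young's inequality,
\[
c_i^pc_j\le \tfrac{p}{p+1}c_i^{p+1}+\tfrac{1}{p+1}c_j^{p+1},
\]
so, summing over $i$, the term is controlled by $C_A\sum_j\int\eta^2c_j^{p+1}\dV$. We also need a tail contribution from outside the cutoff; using $\eta^2\le\eta^{2(p+1)/p}$ there requires exponent bookkeeping that we will handle by choosing $\eta$ as a power of a base cutoff. Lemma~\ref{lem:GN} with $f=c_j$ and $v=\eta c_j^{p/2}$ gives
\[
\int v^{2(p+1)/p}\dV\le C_p\,\delta\int|\nabla_g v|^2\dV,
\]
because the mass in $B_{2r}$ is below $\delta$ uniformly in time. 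Choosing $\delta$ small relative to $(p-1)/(p^2C_AC_p)$ absorbs this into the good gradient term. The cutoff errors $\int|\nabla\eta|^2c_j^p\dV$ and the cross term are of lower order. Using the same Gagliardo–Nirenberg inequality on the whole manifold, or an induction in $p$ starting from the entropy bound, they are bounded by $\epsilon\sum\int|\nabla(\eta c^{p/2})|^2\dV+C(r,p)$. Concretely, interpolate $\|c\|_p$ between $\|c\|_1$ and $\|c\|_{p+1}$.

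\textbf{Step 3: closing the estimate.} Summing over $k$ and $i$ gives
\[
\frac{\mathrm d}{\mathrm dt}Y+\kappa\sum_{i,k}\int|\nabla(\eta_kc_i^{p/2})|^2\dV\le C,
\qquad Y=\sum_i\int c_i^p\dV.
\]
By the Nash/Gagliardo–Nirenberg inequality with fixed mass, the dissipation term dominates $Y^{1+\theta}-C$ for some $\theta>0$. The resulting differential inequality $Y'\le C-\kappa' Y^{1+\theta}$ gives $Y(t)\le\max\{Y(0),C'\}$ uniformly for $t<T_{\max}$, which is the claim.
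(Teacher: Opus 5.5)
\textbf{Gap in Step~2.} Your overall architecture matches the paper's: entropy bound, uniformly small mass on small balls, Lemma~\ref{lem:GN} to absorb the cubic interaction term, then a superlinear differential inequality. But the key absorption in Step~2 fails as written. With $v=\eta c_j^{p/2}$, Lemma~\ref{lem:GN} controls $\int\eta^{2(p+1)/p}c_j^{p+1}\dV$, whereas your bad term is $\int\eta^2c_j^{p+1}\dV$. Since $0\le\eta\le1$ and $2(p+1)/p>2$, one has $\eta^{2(p+1)/p}\le\eta^2$. So the inequality $\eta^2\le\eta^{2(p+1)/p}$ that you would need is false.

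Taking $\eta$ to be a power of a base cutoff does not help. For a test function $\phi\,c_i^{p-1}$, the good term $\int\phi|\nabla_g c_i^{p/2}|^2\dV$ and the bad term $\int\phi\, c_i^pc_j\dV$ carry the same weight $\phi$. Applying the lemma to $\psi c_j^{p/2}$ requires $\psi^{2(p+1)/p}\ge\phi$ to dominate the bad term, but $\psi^2\lesssim\phi$ to be controlled by the good term. Where $\phi$ is small these are incompatible, since $\phi^{p/(2(p+1))}\gg\phi^{1/2}$ there. So the absorption cannot be done one ball at a time.

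\textbf{The repair.} Absorb only after summing over the cover, using that the balls $B_r(x_\ell)$, on which $\eta_\ell\equiv1$, already cover $M$. With overlap bound $N_0$, $D_j=\int|\nabla_gc_j^{p/2}|^2\dV$ and $Y_j=\int c_j^p\dV$,
\[
\sum_k\int\eta_k^2c_j^{p+1}\dV\le N_0\sum_\ell\int\eta_\ell^{2(p+1)/p}c_j^{p+1}\dV\le C\delta\sum_\ell\int|\nabla_g(\eta_\ell c_j^{p/2})|^2\dV\le C\delta\bigl(D_j+r^{-2}Y_j\bigr).
\]
Here the lemma is applied to $\chi_\ell c_j$ with an auxiliary cutoff $\chi_\ell$, because $c_j$ itself is not compactly supported. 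Once you do this, localizing the test function is superfluous, and this is exactly the paper's route. It tests globally with $c_i^{p-1}$, getting
\[
Y'+\tfrac{4(p-1)}{p}\sum_iD_i\le CY+(p-1)\sum_{i,j}a_{ij}\int c_i^pc_j\dV .
\]
It bounds the last sum by $R_A\sum_i\int c_i^{p+1}\dV$ with $R_A=\max_i\sum_ja_{ij}$, and localizes only this $L^{p+1}$ bound, via disjoint sets $E_\ell\subset B_r(x_\ell)$.

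Testing globally also removes two loose ends in your sketch:
\begin{itemize}
\item The cross term $\int c_i^p\eta\nabla_g\eta\cdot\nabla_gU_i\dV$ is left unresolved. It can be handled with $\|\nabla_gU_i\|_{p+1}\le C(1+\|c\|_{2(p+1)/(p+3)})$ and interpolation between $L^1$ and $L^{p+1}$, but you would have to carry this out.
\item Summing your localized identities differentiates the weighted quantity $\sum_i\int(\sum_k\eta_k^2)c_i^p\dV$, not $Y$. This is comparable to $Y$, but it must be tracked.
\end{itemize}

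Your closing step is fine and equivalent to the paper's Step~3. It bounds $r^{-2}Y$ by $\epsilon\sum_iD_i+C$ through interpolation of $\|c\|_p$ between $\|c\|_1$ and $\|c\|_{p+1}$, then compares with $Y'\le C-\kappa'Y^{p/(p-1)}$.
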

\begin{proof}
\par\smallskip
\noindent\emph{Step 1. The differential inequality.}
Write $b_i=\nabla_g\log h_i+\nabla_gU_i[c]$, so that $\partial_tc_i=\Delta_gc_i-\Div(c_ib_i)$ and $\Div b_i=\Delta_g\log h_i-\sum_j a_{ij}(c_j-\rho_j)$. Set
\[
Y_i=\int c_i^p\dV,\quad D_i=\int|\nabla_gc_i^{p/2}|^2\dV,
\quad Y=\sum_iY_i,\quad D_p=\sum_iD_i.
\]
Strict positivity on compact time intervals justifies testing by $c_i^{p-1}$ even for noninteger $p$. Integration by parts gives
\[
\int_M c_i^{p-1}\Delta_gc_i\dV
=-(p-1)\int_Mc_i^{p-2}|\nabla_gc_i|^2\dV
=-\frac{4(p-1)}{p^2}D_i.
\]
For the drift term, we have

\[
-\int c_i^{p-1}\Div(c_ib_i)\dV
=\frac{p-1}{p}\int\nabla_g(c_i^p)\cdot b_i\dV
=-\frac{p-1}{p}\int c_i^p\Div b_i\dV.
\]
Since $a_{ij}\rho_j\ge0$, we get
\begin{equation}\label{eq:Lp-test}
Y'+\frac{4(p-1)}pD_p
\le C_{p,h}Y+(p-1)\sum_{i,j}a_{ij}\int c_i^pc_j\dV.
\end{equation}

\par\smallskip
\noindent\emph{Step 2. Absorption on small balls.}

Choose $\delta>0$, to be fixed below, and decrease the radius $r_\delta$ from Lemma~\ref{lem:smallmass} so that $r=r_\delta$ satisfies
\[
\frac{5r}{2}\le r_*,\qquad
c_Vs^2\le\operatorname{vol}_g(B_s(x))\le C_Vs^2
\quad(x\in M,\ 0<s\le r_*),
\]
where $r_*$ is the common normal-coordinate radius from the proof of Lemma~\ref{lem:GN}. The metric bounds there hold for $|y|<r_*$. We use a standard covering argument for balls of equal radius. See, for example, \cite[Chapter~I, Section~7.1, p.~32]{SteinHA}. Choose a maximal finite set $\{x_\ell\}_{\ell=1}^N$ satisfying
\[
d_g(x_\ell,x_k)\ge r\quad(\ell\ne k).
\]
Such a set is obtained by successively adding points at distance at least $r$ from all previous centers. Compactness makes this process finite, and maximality gives
\[
M=\bigcup_{\ell=1}^N B_r(x_\ell),\qquad
B_{r/2}(x_\ell)\cap B_{r/2}(x_k)=\varnothing\quad(\ell\ne k).
\]
For fixed $x\in M$, put $N(x)=\#\{\ell\mid x\in B_{2r}(x_\ell)\}$. For every index counted in $N(x)$, $B_{r/2}(x_\ell)\subset B_{5r/2}(x)$. Hence
\[
N(x)c_V(r/2)^2\le\operatorname{vol}_g(B_{5r/2}(x))
\le C_V(5r/2)^2,
\qquad
N(x)\le N_0:=\left\lceil\frac{25C_V}{c_V}\right\rceil.
\]
Here $\lceil x\rceil$ denotes the smallest integer greater than or equal to $x$. Thus the enlarged balls have uniformly bounded overlap. Choose disjoint measurable sets $E_\ell\subset B_r(x_\ell)$ covering $M$ and cutoffs $\eta_\ell$ equal to one on $B_r(x_\ell)$, supported in $B_{2r}(x_\ell)$, with $|\nabla\eta_\ell|\le C/r$.

For each $\ell$, choose $\chi_\ell\in C_c^\infty(B_{2r}(x_\ell))$ with $0\le\chi_\ell\le1$ and $\chi_\ell=1$ on a neighborhood of $\operatorname{supp}\eta_\ell$. Apply Lemma~\ref{lem:GN} to $f=\chi_\ell c_i$ and $\eta=\eta_\ell$. Then $\eta_\ell f^{p/2}=\eta_\ell c_i^{p/2}$ and
$\int_{B_{2r}(x_\ell)}f\dV\le\int_{B_{2r}(x_\ell)}c_i\dV<\delta$. Young's inequality and \eqref{eq:GN} therefore give

\[
c_i^pc_j\le\frac p{p+1}c_i^{p+1}+\frac1{p+1}c_j^{p+1},\qquad
\int_{E_\ell}c_i^{p+1}\dV\le C_p\delta\int|\nabla_g(\eta_\ell c_i^{p/2})|^2\dV.
\]
Expanding the cutoff derivative and using bounded overlap,
\[
\sum_\ell\int|\nabla_g(\eta_\ell c_i^{p/2})|^2\dV
\le C D_i+Cr^{-2}Y_i.
\]
Let $R_A=\max_i\sum_j a_{ij}$. Symmetry gives the same bound for column sums. Consequently,
\[
\sum_{i,j}a_{ij}\int c_i^pc_j\dV
\le R_A\sum_i\int c_i^{p+1}\dV
\le C_{p,A}\delta D_p+C_{p,A}\delta r^{-2}Y.
\]
The coefficient of $\delta D_p$ is independent of $r$, $\delta$ and the number of balls. Choose $\delta$ so that $(p-1)C_{p,A}\delta\le2(p-1)/p$ and then fix $r=r_\delta$. We obtain
\begin{equation}\label{eq:Lp-absorb}
Y'+\kappa_pD_p\le C_pY,\qquad \kappa_p=2(p-1)/p>0.
\end{equation}

If $A=0$, then \(\sum_{i,j}a_{ij}\int_Mc_i^pc_j\dV=0\), and \eqref{eq:Lp-absorb} follows directly from \eqref{eq:Lp-test}.

\par\smallskip
\noindent\emph{Step 3. A bound uniform in time.}

Since $D_p\ge0$, \eqref{eq:Lp-absorb} and Gronwall's inequality give
\[
Y(t)\le Y(0)e^{C_pt}\qquad(0\le t<T_{\max}).
\]
In particular, for every finite $T>0$, $\sup_{0\le t<\min\{T,T_{\max}\}}Y(t)\le Y(0)e^{C_pT}$. This excludes finite-time divergence of $Y$ on the interval of existence. Global existence of the solution will follow from the supremum estimate and continuation argument in Proposition~\ref{prop:regularity}. To obtain a bound independent of time, apply the cutoff estimate once more with a fixed finite cover and the global mass bound. It yields $\int c_i^{p+1}\dV\le C_{p,\rho_i}(D_i+Y_i)$. H\"older's inequality gives
\[
Y_i=\int c_i^{1/p}(c_i^{p+1})^{(p-1)/p}\dV
\le\rho_i^{1/p}\left(\int c_i^{p+1}\dV\right)^{(p-1)/p}.
\]
With $r_p=p/(p-1)>1$, this implies $Y_i^{r_p}\le C(D_i+Y_i)$. The inequality $Cy\le y^{r_p}/2+C'$ for $y\ge0$, obtained by splitting at $(2C)^{1/(r_p-1)}$, absorbs the last term. Thus
\[
Y^{r_p}\le n^{r_p-1}\sum_iY_i^{r_p}\le C(1+D_p).
\]
Substituting in \eqref{eq:Lp-absorb} gives $Y'+a_pY^{r_p}\le b_p(1+Y)$ for fixed positive $a_p,b_p$. Choose
\[
R\ge\max\{1,(4b_p/a_p)^{1/(r_p-1)}\}.
\]
The derivative is negative whenever $Y\ge R$. Scalar comparison proves $Y(t)\le\max\{Y(0),R\}$ throughout the interval of existence.
\end{proof}

\begin{proposition}\label{prop:regularity}
Assume \eqref{hyp:A} and $\Lambda_J(\rho)>0$ for every nonempty $J\subseteq I$, and let $c$ be the maximal solution from Lemma~\ref{lem:local}. The maximal time is infinite and \eqref{eq:high} holds.

\end{proposition}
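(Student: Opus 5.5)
The plan is to upgrade the uniform $L^p$ bounds of Proposition~\ref{prop:Lp} to a uniform supremum bound, then feed this into Lemma~\ref{lem:local} and Lemma~\ref{lem:smoothing}.

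\emph{Step 1: uniform drift bounds.} Fix some $q>2$. By Proposition~\ref{prop:Lp}, $\sup_{t<T_{\max}}\|c(t)\|_q\le C$. The elliptic estimate \eqref{eq:ell-est} then gives $\|U_i[c(t)]\|_{W^{2,q}}\le C$. The two-dimensional Sobolev embedding yields $\sup_t\|b_i[c(t)]\|_\infty\le C$, where $b_i=\nabla_g\log h_i+\nabla_gU_i$. Consequently, for any $p<\infty$, $\sup_t\|c_ib_i\|_p\le C_p$.

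\emph{Step 2: supremum bound via the mild formulation.} For $t\ge1$ (and $t<T_{\max}$), I would restart the mild formula \eqref{eq:mild} at $a=t-1$:
\[
c_i(t)=S(1)c_i(t-1)-\int_{t-1}^tS(t-r)\Div\bigl(c_ib_i\bigr)(r)\,\mathrm dr .
\]
The first term is bounded using $\|S(1)f\|_\infty\le C\|f\|_1=C\rho_i$. For the second term, apply \eqref{eq:heat} with some $q>2$:
\[
\|S(t-r)\Div X\|_\infty\le C_q(t-r)^{-1/2-1/q}\|X\|_q .
\]
The exponent is below $1$, so the integral is bounded by $C\sup_r\|c_ib_i\|_q$. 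For $t<1$ the same argument with $a=0$ and $\|S(t)c_{i,0}\|_\infty\le\|c_{i,0}\|_\infty$ works. This gives $\sup_{t<T_{\max}}\|c(t)\|_\infty\le B$, and the continuation criterion of Lemma~\ref{lem:local} forces $T_{\max}=\infty$.

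\emph{Step 3: higher regularity.} Apply Lemma~\ref{lem:smoothing} on windows $[t-\tau',t]$ with $L=\min\{\tau,1\}$, $\delta=L/2$ and the uniform bound $B$. Because the constant is independent of the starting time $a$ (Remark~\ref{rem:smoothing}), this yields the first line of \eqref{eq:high} uniformly for $t\ge\tau$.

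\emph{Step 4: uniform positivity.} This is the main obstacle, since the maximum-principle argument in Lemma~\ref{lem:local} only gives decay like $e^{-Qt}$. I would use a Harnack inequality instead. The nondivergence equation \eqref{eq:linear-form} has coefficients $b_i,q_i$ bounded uniformly for all time by Steps 1--3. The parabolic Harnack inequality on the compact surface, applied on windows of unit length, then gives
\[
\sup_M c_i(t-1/2)\le C\inf_M c_i(t)
\]
with $C$ independent of $t$. Since $\int c_i=\rho_i$ and $\mathrm{vol}=1$, the left side is at least $\rho_i$. Hence $\inf_M c_i(t)\ge\rho_i/C$ for $t\ge1$. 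For $t\in[\tau,1]$, the lower bound $c_i\ge(\min c_{i,0})e^{-Q}$ from Lemma~\ref{lem:local} suffices.

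An alternative for this step is to bound $\log c_i$ from below through a comparison on each unit window starting from the positive quantity $S(1/2)$ applied to mass. However, the Harnack route seems cleanest. The key point is that its constant depends only on the uniform coefficient bounds.
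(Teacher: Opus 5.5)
Your proposal is correct. Steps 1--3 follow the paper's proof, with one inessential difference: the paper bounds the free term by $\|S(\ell)c_i(t-\ell)\|_\infty\le C\ell^{-1/3}\|c_i(t-\ell)\|_3$ using the $p=3$ bound, while you use $L^1$-to-$L^\infty$ smoothing at unit time together with the mass. Your Step~3 window should read $[t-L,t]$ rather than $[t-\tau',t]$.

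The genuine difference is in the uniform positive lower bound. The paper argues by contradiction. If $c_i(t_k,x_k)\to0$ with $t_k\to\infty$, the uniform parabolic bounds from Step~1 give $C^{2,1}$ convergence of the translates $c(t_k+\cdot)$ to a nonnegative solution $v$ with $v_i(0,x_*)=0$. The strong maximum principle then forces $v_i\equiv0$ on an earlier slice, contradicting the conserved mass $\rho_i>0$. The remaining compact time interval is handled by continuity.

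Your Harnack route instead yields a quantitative bound $\inf_M c_i(t)\ge\rho_i/C_H$. Here $C_H$ depends only on $(M,g)$ and the $L^\infty$ bounds of $b_i,q_i$, so positivity needs only Steps~1--2, not the higher regularity or any compactness. The price is importing the Krylov--Safonov (or Moser--Aronson) parabolic Harnack inequality with bounded drift and zeroth-order term.

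You also need to pass from the local cylinder version to the global form $\sup_M c_i(t-\tfrac12)\le C_H\inf_M c_i(t)$, which you assert without proof. This follows by chaining: join any two points of $M$ by $N\lesssim\operatorname{diam}(M)/R$ overlapping balls of a small radius $R$, and advance time by about $R^2$ at each step, with $R$ small enough that $NR^2\le\tfrac12$. It is standard, but it is the one step that should be spelled out. The paper's argument needs only the classical strong maximum principle but gives no explicit constant.
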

\begin{proof}
\par\smallskip
\noindent\emph{Step 1. Global existence and regularity.}
Take $p=3$ in Proposition~\ref{prop:Lp}. The elliptic estimate and the embedding $W^{2,3}\hookrightarrow C^{1,1/3}$ give $\norm{U_i[c(t)]}_{W^{2,3}}+\norm{b_i(t)}_\infty\le C$. Thus $\norm{c_i(t)b_i(t)}_3\le C$. For $0<\ell\le\min\{1,t\}$, Duhamel's formula and \eqref{eq:heat} yield
\[
\begin{split}
\norm{c_i(t)}_\infty
&\le\norm{S(\ell)c_i(t-\ell)}_\infty+
\int_{t-\ell}^t\norm{S(t-s)\Div(c_i(s)b_i(s))}_\infty\,\mathrm ds\\
&\le C\ell^{-1/3}+C\int_0^\ell s^{-5/6}\,\mathrm ds
=C\ell^{-1/3}+6C\ell^{1/6}.
\end{split}
\]
For $t\ge\tau>0$, choose $\ell=\min\{1,\tau\}$. If $T_{\max}<\infty$, the choice $\tau=T_{\max}/2$ contradicts Lemma~\ref{lem:local}. Hence $T_{\max}=\infty$. Together with continuity near $t=0$, the preceding estimate gives
\[
c\in C([0,\infty);L^\infty(M)^n),\qquad
\sup_{t\ge0}\|c(t)\|_{L^\infty(M)^n}<\infty.
\]

Fix $\tau>0$ and put $\ell=\min\{1/4,\tau/16\}$. For $t_0\ge\tau$, apply Lemma~\ref{lem:smoothing} on $[t_0-2\ell,t_0+\ell]$, with $L=3\ell$ and $\delta=\ell$. These intervals lie in $[\tau/2,\infty)$, where the preceding supremum bound is uniform. The lemma therefore gives all the bounds in the first line of \eqref{eq:high}, with constants independent of $t_0$.

\par\smallskip
\noindent\emph{Step 2. A uniform positive lower bound.}
We prove the positive lower bound by compactness and the strong maximum principle. Otherwise, after restricting to a fixed component and passing to a subsequence, there would exist $t_k\to\infty$ and $x_k\to x_*$ with $c_i(t_k,x_k)\to0$. Set $v^k(s,x)=c(t_k+s,x)$ for $-1\le s\le1$. The preceding parabolic bounds and Arzel\`a--Ascoli give convergence on $[-3/4,3/4]\times M$ in $C^{2,1}$ to a nonnegative solution $v$. Elliptic regularity passes the potentials and their derivatives to the limit, while uniform convergence preserves every mass. In particular,
\[
v_i(0,x_*)=0,\qquad \int_Mv_i(s)\dV=\rho_i>0.
\]
For $Q\ge\sup|q_i[v]|$, the function $z=e^{-Qs}v_i$ satisfies
\[
z_s-\Delta_gz+b_i[v]\cdot\nabla_gz+(q_i[v]+Q)z=0,
\qquad q_i[v]+Q\ge0.
\]
Apply the strong maximum principle \cite[Theorem 2.7]{Lieberman} to $-z\le0$, whose maximum zero is attained at an interior point. Connectedness forces $z$ to vanish on the earlier slice $s=-1/2$, contradicting its positive mass. On the remaining compact time interval, continuity and strict positivity provide a positive minimum. This proves \eqref{eq:high}.
\end{proof}

\begin{proposition}\label{prop:subsequence}
Assume \eqref{hyp:A} and $\Lambda_J(\rho)>0$ for every nonempty $J\subseteq I$, and let $c$ be the maximal solution from Lemma~\ref{lem:local}.

There exist $t_k\to\infty$ and a smooth positive stationary state $c^*$ such that $c(t_k)\to c^*$ in $C^m(M)^n$ for every integer $m\geq0$.
\end{proposition}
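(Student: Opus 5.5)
The plan combines the energy identity of Lemma~\ref{lem:variation} with the uniform regularity \eqref{eq:high} from Proposition~\ref{prop:regularity}. The first step is to show that $\F(c(t))$ is bounded below uniformly in time. Proposition~\ref{prop:coercivity} gives $\F(c)\ge\varepsilon H(c)-C$, and $H(c)\ge-\sum_i e^{-1}$ since $s\log s\ge-e^{-1}$ and the area is one. Since $\F(c(t))$ is also nonincreasing, integrating \eqref{eq:dissipation} gives
\[
\int_0^\infty\D(c(t))\,\mathrm dt\le\F(c_0)-\inf_{t\ge0}\F(c(t))<\infty.
\]

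The second step is to show that $\D(c(t))\to0$ along the whole trajectory, not just along some sequence. For this I would prove that $t\mapsto\D(c(t))$ is uniformly Lipschitz on $[1,\infty)$. The time derivative of $\D$ involves $\partial_tc_i$, $\nabla_g\partial_t\mu_i$ and $c_i$. By \eqref{eq:high}, $c_i$ is bounded below and bounded in $C^{m,\alpha}$ uniformly for $t\ge1$. The flow equation expresses $\partial_tc_i$ through two spatial derivatives of $c$ and $U[c]$, so it is bounded in $C^{m}$. The potential $\partial_tU_i=\sum_ja_{ij}\G(\partial_tc_j)$ is controlled by \eqref{eq:ell-est}. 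Together these give $|\tfrac{\mathrm d}{\mathrm dt}\D|\le C$. An integrable, nonnegative, uniformly Lipschitz function must tend to zero. Alternatively it suffices to pick $t_k\to\infty$ with $\D(c(t_k))\to0$, which follows directly from integrability. I will use the simpler alternative, since the statement only asks for a sequence.

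The third step extracts the limit. Take $t_k\to\infty$ with $\D(c(t_k))\to0$. By \eqref{eq:high} with $\tau=1$, the sequence $c(t_k)$ is bounded in $C^{m+1,\alpha}$ for each $m$. Arzel\`a--Ascoli and a diagonal argument over $m$ give a subsequence converging in every $C^m(M)^n$ to a smooth $c^*$. Each mass $\rho_i$ is preserved by uniform convergence. The uniform lower bound in \eqref{eq:high} passes to the limit, so $c^*$ is positive. By \eqref{eq:ell-est}, $U_i[c(t_k)]\to U_i[c^*]$ in $C^{m+2,\alpha}$, and therefore $\mu_i(c(t_k))\to\mu_i(c^*)$ in $C^1$ because $\log$ is smooth on the positive range. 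Hence
\[
\D(c^*)=\lim_k\D(c(t_k))=0.
\]
Since $c^*>0$ and $M$ is connected, each $\mu_i(c^*)$ is constant. Lemma~\ref{lem:stationary} then shows that $c^*$ is stationary and satisfies \eqref{eq:stationary}.

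The only delicate point is the lower bound on the energy and the continuity of $\D$ under $C^1$ convergence. The strict positivity in \eqref{eq:high} is what makes $\log c_i$, and hence $\nabla_g\mu_i$, converge. Everything else follows directly from the earlier estimates, so I do not expect a serious obstacle.
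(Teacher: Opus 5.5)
Your proposal is correct and follows essentially the same route as the paper. You bound the energy below via coercivity (using $s\log s\ge -e^{-1}$ where the paper uses Jensen), select times with $\D(c(t_k))\to0$ from the integrability of $\D$, extract a limit by compactness from \eqref{eq:high}, pass $\nabla_g\mu_i$ to the limit using the uniform positive lower bound, and conclude with Lemma~\ref{lem:stationary}. The paper additionally identifies $\lim_{t\to\infty}\F(c(t))$ with $\F(c^*)$ by continuity of the energy along the subsequence. That fact is not part of the statement, but it is used later in Proposition~\ref{prop:whole}, so it is worth recording.
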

\begin{proof}
Jensen's inequality gives $\int c_i\log c_i\dV\ge\rho_i\log\rho_i$. Thus \eqref{eq:coercivity} bounds the energy from below. Equation~\eqref{eq:dissipation} implies
\[
\int_0^\infty\D(c(t))\,\mathrm dt
=\F(c_0)-\lim_{t\to\infty}\F(c(t))<\infty.
\]
Choose $t_k\in[k,k+1]$ so that $\D(c(t_k))\le2\int_k^{k+1}\D(c(t))\,\mathrm dt$. Such a choice follows by integration when the integral is positive. When it vanishes, continuity permits any point of the interval. Hence $\D(c(t_k))\to0$. The uniform $C^{m+1}$ bounds give compactness in $C^m$. We get a subsequence converging in $C^m(M)^n$ for every integer $m\ge0$, by a diagonal selection. The limits agree by uniqueness of the uniform limit. Mass and the positive lower bound pass to $c^*$. Moreover, $\norm{\nabla_g\mu(c(t_k))}_2^2\le C\D(c(t_k))\to0$. The $C^1$ convergence, the lower bound and elliptic regularity imply uniform convergence of $\nabla\log c_i(t_k)$ and $\nabla U_i[c(t_k)]$. Therefore $\nabla\mu_i(c^*)=0$. By Lemma~\ref{lem:stationary},
\[
c_i^*=\rho_i\frac{h_i e^{U_i[c^*]}}{\int_Mh_i e^{U_i[c^*]}\dV},
\qquad
\Div\left(c_i^*\nabla_g\left(\log\frac{c_i^*}{h_i}-U_i[c^*]\right)\right)=0
\quad(i\in I).
\]
 Energy continuity along this subsequence identifies the decreasing energy limit with $\F(c^*)$.
\end{proof}

\Needspace{8\baselineskip}
\section{Convergence to a stationary density}\label{sec:convergence}
Under the assumptions of Theorem~\ref{thm:main}, we prove that the entire trajectory converges to the stationary density obtained in Proposition~\ref{prop:subsequence}. We first establish a local gradient inequality near a smooth positive stationary density and then use it with the energy identity and the uniform estimates from Section~\ref{sec:global}. 

\subsection{A gradient inequality}
\begin{lemma}\label{lem:LS}

Let $c^*$ be a smooth positive stationary density with $\int_Mc_i^*\dV=\rho_i$ and let $0<\alpha<1$. There exist $\theta\in(0,1/2]$, $C>0$ and $\sigma>0$ such that every $c\in C^\infty(M)^n$ belonging to
\[
\mathcal U=\left\{c\in C^{0,\alpha}(M)^n\ \middle|\
c_i>0,\ \int_Mc_i\dV=\rho_i\ (i\in I),\
\|c-c^*\|_{C^{0,\alpha}}<\sigma\right\}
\]
satisfies

\begin{equation}\label{eq:LS}
|\F(c)-\F(c^*)|^{1-\theta}\le C\norm{P\mu(c)}_2,
\qquad (Pf)_i=f_i-\int_M f_i\dV,\quad i=1,\ldots,n
\end{equation}
and $\mu$ is defined as in (\ref{def:mu}).
\end{lemma}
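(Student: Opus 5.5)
The plan is a Łojasiewicz--Simon argument in the form of Chill's abstract version, carried out on zero-mean Hölder perturbations of $c^*$. Set $X=\{\zeta\in C^{0,\alpha}(M)^n:\int_M\zeta_i\dV=0\}$ and write $c=c^*+\zeta$. Since $\min c^*_i>0$, the map $\zeta\mapsto\log(c^*_i+\zeta_i)$ is real analytic from a small ball of $X$ into $C^{0,\alpha}$. Indeed, $\log(c^*_i+\zeta_i)=\log c^*_i+\sum_{k\ge1}(-1)^{k+1}k^{-1}(\zeta_i/c^*_i)^k$, and this series converges because $C^{0,\alpha}$ is a Banach algebra. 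By \eqref{eq:ell-est}, $\zeta\mapsto U[c^*+\zeta]$ is affine and bounded into $C^{2,\alpha}$.

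Hence $N(\zeta):=P\mu(c^*+\zeta)$ is analytic from a small ball of $X$ into $X$. The functional $E(\zeta):=\F(c^*+\zeta)$ is analytic there, with $DE(\zeta)[\xi]=\int N(\zeta)\cdot\xi\dV$ by Lemma~\ref{lem:variation}. By Lemma~\ref{lem:stationary}, $N(0)=0$.

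The linearization is
\[
L\xi=DN(0)\xi=P\bigl(\xi_i/c^*_i-\textstyle\sum_ja_{ij}\G\xi_j\bigr)_i .
\]
This operator is symmetric for the $L^2$ pairing, by the symmetry of $A$ and $G$. It is a compact perturbation of the invertible operator $\xi\mapsto P(\xi/c^*)$, both on $X$ and on $L^2_0$, the closure of $X$ in $L^2$. Hence $L$ is Fredholm of index zero in both settings. Its kernel $K$ is finite dimensional and consists of smooth functions: if $L\xi=0$, then $\xi_i=c^*_i(\sum_j a_{ij}\G\xi_j+\text{const})$, and bootstrapping with \eqref{eq:ell-est} gives smoothness. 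The kernel is the same in the $X$ and $L^2_0$ settings.

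Let $\Pi$ be the $L^2$-orthogonal projection onto $K$. Then $\mathcal L=L+\Pi$ is an isomorphism of $X$ and of $L^2_0$. Apply the analytic inverse function theorem to $\Phi(\zeta)=N(\zeta)+\Pi\zeta$ near $0$ in $X$. This yields an analytic local inverse $\Psi$, and I define the finite-dimensional analytic function $\Gamma(k)=E(\Psi(k))$ for $k\in K$ near $0$. The classical Łojasiewicz inequality gives $\theta\in(0,1/2]$ with
\[
|\Gamma(k)-\Gamma(0)|^{1-\theta}\le C|\nabla\Gamma(k)| .
\]
Note that $\Gamma(0)=\F(c^*)$.

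To transfer this to $E$, take $\zeta$ small in $X$ and set $k=\Pi\zeta$, so that $\Phi(\zeta)-k=N(\zeta)$. I will prove two estimates:
\begin{enumerate}
\item[(i)] $|E(\zeta)-\Gamma(k)|\le C\|N(\zeta)\|_2^2$;
\item[(ii)] $|\nabla\Gamma(k)|\le C\|N(\zeta)\|_2$.
\end{enumerate}
Together with $\|N\|_2^2\le C\|N\|_2^{1-\theta}$ for small $\|N\|_2$, these give \eqref{eq:LS}.

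For (ii), the chain rule gives $\nabla\Gamma(k)=\Pi\,D\Psi(k)^*N(\Psi(k))$. Since $\Phi(\Psi(k))=k$, we have $N(\Psi(k))=k-\Pi\Psi(k)$, which lies in $K$. Comparing $\Psi(k)$ with $\zeta$, whose images under $\Phi$ differ by $N(\zeta)$, reduces (ii) to a Lipschitz bound
\[
\|\Psi(k)-\zeta\|_2\le C\|\Phi(\Psi(k))-\Phi(\zeta)\|_2 .
\]
For (i), Taylor expand $E$ along the segment from $\Psi(k)$ to $\zeta$. The first-order term is $\int N(\Psi(k))\cdot(\zeta-\Psi(k))$, and the second derivative $D^2E=DN$ is bounded on $L^2$, uniformly on the Hölder ball, because $c^*+\zeta$ stays bounded below there. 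Using the same Lipschitz bound then gives (i).

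The main obstacle is this $L^2$ Lipschitz estimate for $\Phi$ on a small Hölder ball, since $\Psi$ is constructed only in the $X$ topology. My plan is to write
\[
\Phi(\zeta_1)-\Phi(\zeta_2)=\mathcal L(\zeta_1-\zeta_2)+R ,
\]
where $R=P\bigl(\log(c^*+\zeta_1)-\log(c^*+\zeta_2)-(\zeta_1-\zeta_2)/c^*\bigr)$. Pointwise, $|R|\le C(\|\zeta_1\|_\infty+\|\zeta_2\|_\infty)|\zeta_1-\zeta_2|$, so $\|R\|_2\le C\sigma\|\zeta_1-\zeta_2\|_2$. Invertibility of $\mathcal L$ on $L^2_0$ then gives $\|\zeta_1-\zeta_2\|_2\le C\|\Phi(\zeta_1)-\Phi(\zeta_2)\|_2$ once $\sigma$ is small. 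This fixes $\sigma$, $\theta$ and $C$ in Lemma~\ref{lem:LS}.
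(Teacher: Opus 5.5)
Your proposal is correct. Its analytic core matches the paper's: the pointwise bound on the logarithm remainder, $L^2$-invertibility of the linearization, and absorption of the remainder once $\|\zeta\|_\infty$ is small, which turns H\"older closeness into $L^2$ control by $\|P\mu(c)\|_2$. What differs is how the finite-dimensional reduction is organized.

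The paper does a Lyapunov--Schmidt reduction. It solves the range equation $\Pi_{\mathcal R}\mathcal M(z+w)=0$ by the analytic implicit function theorem. It then compares $\xi$ with $\xi_0=z+\Psi(z)$, the point of the manifold $\{\Pi_{\mathcal R}\mathcal M=0\}$ with the \emph{same} kernel component $z=\Pi_{\mathcal K}\xi$. Then $\widetilde w=\xi-\xi_0\in\mathcal R$, so the first-order Taylor term $\langle\mathcal M(\xi_0),\widetilde w\rangle$ vanishes, and only the bound \eqref{eq:L-inverse-bound} for $L$ on $\mathcal R$ is needed.

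You use Chill's inverse-function version with $\Phi=N+\Pi$. Your points $\Psi(k)$, $k\in\mathcal K$, parametrize the same local manifold $\{\Pi_{\mathcal R}N=0\}$, but by the value of $\Phi$ rather than by the kernel projection. Consequently $\zeta-\Psi(\Pi\zeta)$ has the nonzero kernel component $N(\Psi(\Pi\zeta))$, and the first-order term equals $\|N(\Psi(\Pi\zeta))\|_2^2$. Your $L^2$ Lipschitz bound for $\Phi$, coming from invertibility of $L+\Pi$ on $L^2_0$, controls this term together with the quadratic one. Your route avoids splitting the unknown and solving a parametrized equation, at the price of this extra linear term; the paper's route keeps the orthogonality explicit.

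One small correction at the end. What closes the argument is $\|N\|_2^{2(1-\theta)}\le\|N\|_2$ for $\|N\|_2\le1$, equivalently $\|N\|_2^2\le\|N\|_2^{1/(1-\theta)}$. This is exactly where $\theta\le1/2$ is used, as in the paper's condition $2a\ge1$. The inequality $\|N\|_2^2\le C\|N\|_2^{1-\theta}$ that you wrote holds for every $\theta$ and does not by itself give \eqref{eq:LS}.
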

\begin{proof}
\par\smallskip
\noindent\emph{Step 1. The linearized operator.}
Let 
\[
\Hh
:=
\left\{
\xi=(\xi_1,\ldots,\xi_n)\in L^2(M)^n
\;\middle|\;
\int_M \xi_i\,dV_g=0
\quad\text{for every }i=1,\ldots,n
\right\}
\]
and let $X=C^{0,\alpha}(M)^n\cap\Hh$, equipped with the H\"older norm. For $\xi\in X$ sufficiently small, set
\[
\mathcal M(\xi):=P\mu(c^*+\xi)
=P\left(
\log\frac{c_i^*+\xi_i}{\rho_i h_i}+1
-\sum_j a_{ij}\G(c_j^*+\xi_j-\rho_j)
\right)_{i\in I}.
\]
This defines a map from a neighborhood of $0$ in $X$ into $X$. Since $c^*$ is stationary, $\mathcal M(0)=0$. Then, $D\F(c^*+\xi)[\zeta]=\langle\mathcal M(\xi),\zeta\rangle$ for $\zeta\in X$, where the inner product is that of $L^2$. The linearization is
\begin{equation}\label{eq:Hessian}
L\xi=P\left[\left(\frac{\xi_i}{c_i^*}\right)_i-
\left(\sum_j a_{ij}\G\xi_j\right)_i\right]=L_0\xi-K\xi.
\end{equation}
Here
\[
 (L_0\xi)_i=P_i(\xi_i/c_i^*),\qquad
 (K\xi)_i=\sum_j a_{ij}\mathcal G\xi_j.
\]
$L_0$ is a linear operator from $\Hh$ into itself. The Green potentials have zero mean, so $K$ also maps $\Hh$ into itself. On both $\Hh$ and $X$, the operator $L_0$ has the bounded inverse
\begin{equation}\label{eq:inverse}
(L_0^{-1}f)_i=c_i^*\left(f_i-\frac1{\rho_i}\int c_i^*f_i\dV\right).
\end{equation}
Indeed, for $f\in\Hh$, the vector $\xi$ defined by the right-hand
side of \eqref{eq:inverse} satisfies
\[
\begin{aligned}
\int_M\xi_i\dV
&=\int_Mc_i^*f_i\dV
-\frac{\int_Mc_i^*f_i\dV}{\rho_i}\int_Mc_i^*\dV=0,\\
P_i(\xi_i/c_i^*)
&=P_i\left(f_i-\frac1{\rho_i}\int_Mc_i^*f_i\dV\right)
=f_i.
\end{aligned}
\]
Moreover, $L_0\xi=0$ implies $\xi_i=\lambda_i c_i^*$, and $0=\int_M\xi_i\dV=\lambda_i\rho_i$ forces $\lambda_i=0$. Thus \eqref{eq:inverse} is indeed the inverse of $L_0$. The Green operator maps $L^2_0$ boundedly into $H^2$, which embeds compactly into $L^2$. Hence $K$ is compact on $\Hh$ and $L=L_0(1-L_0^{-1}K)$ is Fredholm of index zero. It is self-adjoint, since $A$ and $G$ are symmetric and $P$ can be omitted when paired against a zero-mean vector. With
\[
\Kk=\ker L,\qquad \Rr=\Kk^\perp,
\]
self-adjointness gives
$(\operatorname{ran}L)^\perp=\ker L$, and closedness of the range therefore yields $\operatorname{ran}L=\Rr$. Thus $L:\Rr\to\Rr$ is a bounded bijection, and the bounded inverse theorem gives
\begin{equation}\label{eq:L-inverse-bound}
\norm v_2\le C_L\norm{Lv}_2\qquad(v\in\Rr).
\end{equation}

By \eqref{eq:Hessian}, every $\xi\in\Kk$ satisfies
\[
\frac{\xi_i}{c_i^*}
=\lambda_i+\sum_j a_{ij}\G\xi_j,
\qquad \lambda_i\in\mathbb R,\quad i\in I.
\]
Elliptic regularity, starting from $L^2$, shows that every kernel element is smooth. Let $\{e_\ell\}_{\ell=1}^d$ be a smooth $L^2$-orthonormal basis of $\Kk$, where $d=\dim\Kk$. Define the orthogonal projection by
\[
\Pi_\Kk:\Hh\longrightarrow\Kk,\qquad
\Pi_\Kk v:=\sum_{\ell=1}^d\langle v,e_\ell\rangle_{L^2}e_\ell.
\]
Since each $e_\ell$ is smooth, its restriction to $X$ is bounded. The projection $\Pi_\Rr=1-\Pi_\Kk$ is also bounded on $X$. For $f\in\Rr\cap X$, solve $Lw=f$ in $\Rr$ and write $w=L_0^{-1}(f+Kw)$. Since $H^2(M)\hookrightarrow C^{0,\alpha}(M)$ in dimension two for $\alpha<1$,
\[
\norm w_{C^{0,\alpha}}\le C\bigl(\norm f_{C^{0,\alpha}}+\norm w_2\bigr)
\le C\norm f_{C^{0,\alpha}}.
\]
Hence $L:\Rr\cap X\to\Rr\cap X$ is also an isomorphism.

\par\smallskip
\noindent\emph{Step 2. Finite-dimensional reduction.}
The H\"older space is a Banach algebra. After reducing the neighborhood, the series
\[
\log(c_i^*+\xi_i)=\log c_i^*+
\sum_{k\ge1}\frac{(-1)^{k+1}}k\left(\frac{\xi_i}{c_i^*}\right)^k
\]
converges in that algebra. Thus $\mathcal M$ and $\F(c^*+\xi)$ are real analytic in $\xi$. Apply the analytic implicit function theorem to $\Pi_\Rr\mathcal M(z+w)=0$ with $z\in\Kk$ and $w\in\Rr\cap X$. It gives an analytic map $\Psi$ near zero, with $\Psi(0)=0$ and
\begin{equation}\label{eq:center}
\Pi_\Rr\mathcal M(z+\Psi(z))=0.
\end{equation}
Set $f(z)=\F(c^*+z+\Psi(z))$. For $k\in\Kk$, we define $D\Psi(z)[k]=\left.\frac{\mathrm d}{\mathrm d\varepsilon}\Psi(z+\varepsilon k)\right|_{\varepsilon=0}$. This vector lies in $\Rr$, so
\[
Df(z)[k]=\langle\mathcal M(z+\Psi(z)),k+D\Psi(z)[k]\rangle
=\langle\Pi_\Kk\mathcal M(z+\Psi(z)),k\rangle.
\]
The finite-dimensional analytic gradient inequality \cite[Theorem 2.9(1)]{FM} gives an exponent $a_0\in(0,1)$. Shrink the neighborhood so that $|f-f(0)|\le1$ and take $a=\max\{a_0,1/2\}$. Then
\begin{equation}\label{eq:finite}
|f(z)-f(0)|^a\le C\norm{\Pi_\Kk\mathcal M(z+\Psi(z))}_2,
\qquad \tfrac12\le a<1.
\end{equation}
If $\Kk=\{0\}$, the left side is zero and we take $a=1/2$.

\par\smallskip
\noindent\emph{Step 3. The error in the $L^2$ norm.}
Decompose $\xi=z+w$ and set
\[
\xi_0=z+\Psi(z),\qquad
\widetilde w=w-\Psi(z)\in\Rr,\qquad
\xi_s=\xi_0+s\widetilde w=(1-s)\xi_0+s\xi.
\]
Shrink the neighborhood so that $c_i^*+\xi_{s,i}\ge\frac12\min_M c_i^*>0$ for $0\le s\le1$. By the definition of $U$ and the linearity of $\G$,
\[
-U_i[c^*+\xi]+U_i[c^*+\xi_0]
=-\sum_j a_{ij}\G\widetilde w_j
=-(K\widetilde w)_i.
\]
For the logarithmic term, the fundamental theorem of calculus gives
\[
\log\frac{c_i^*+\xi_i}{c_i^*+\xi_{0,i}}
=\widetilde w_i\int_0^1\frac{\mathrm ds}{c_i^*+\xi_{s,i}}.
\]
Separating the linear term $\widetilde w_i/c_i^*$ and combining it with the Green-potential term, we obtain
\[
\mathcal M(\xi)-\mathcal M(\xi_0)=L\widetilde w+r,
\qquad
r=P\left(
\widetilde w_i\int_0^1
\left(\frac1{c_i^*+\xi_{s,i}}-\frac1{c_i^*}\right)
\,\mathrm ds
\right)_{i\in I}.
\]
Using the positive lower bound and the $L^2$ contractivity of $P$, we obtain
\[
\left|\frac1{c_i^*+\xi_{s,i}}-\frac1{c_i^*}\right|
=\frac{|\xi_{s,i}|}{c_i^*(c_i^*+\xi_{s,i})}
\le C|\xi_{s,i}|,
\qquad
\norm r_2
\le C(\norm\xi_\infty+\norm{\xi_0}_\infty)
\norm{\widetilde w}_2.
\]
Reduce the neighborhood until this last coefficient, denoted by $\eta$, satisfies $\eta C_L\le1/2$. Since $\Pi_\Rr\mathcal M(\xi_0)=0$ and $L\widetilde w\in\Rr$, projection onto $\Rr$ and \eqref{eq:L-inverse-bound} yield
\[
\begin{aligned}
L\widetilde w&=\Pi_\Rr\mathcal M(\xi)-\Pi_\Rr r,\\
\norm{\widetilde w}_2
&\le C_L\norm{\mathcal M(\xi)}_2
+C_L\eta\norm{\widetilde w}_2,
\end{aligned}
\qquad
\norm{\widetilde w}_2\le2C_L\norm{\mathcal M(\xi)}_2.
\]
Projection onto $\Kk$ removes $L\widetilde w$ and gives
\begin{equation}\label{eq:reducedgradient}
\begin{aligned}
\Pi_\Kk\mathcal M(\xi_0)
&=\Pi_\Kk\mathcal M(\xi)-\Pi_\Kk r,\\
\norm{\Pi_\Kk\mathcal M(\xi_0)}_2
&\le(1+2C_L\eta)\norm{\mathcal M(\xi)}_2
\le2\norm{\mathcal M(\xi)}_2.
\end{aligned}
\end{equation}
At the reduced point, $\mathcal M(\xi_0)\in\Kk$ and $\widetilde w\in\Rr=\Kk^\perp$, so
\[
D\F(c^*+\xi_0)[\widetilde w]
=\langle\mathcal M(\xi_0),\widetilde w\rangle_{L^2}=0.
\]
Moreover,
\[
D^2\F(c)[v,v]
=\sum_i\int\frac{v_i^2}{c_i}\dV
-\sum_{i,j}a_{ij}\int v_i\G v_j\dV.
\]
The uniform lower bound along the segment and boundedness of $\G:L^2_0\to L^2_0$ imply
$|D^2\F(c^*+\xi_s)[v,v]|\le C\norm v_2^2$. Since $f(z)=\F(c^*+\xi_0)$, Taylor's formula therefore gives
\[
\begin{aligned}
\F(c^*+\xi)-f(z)
&=\int_0^1(1-s)D^2\F(c^*+\xi_s)
[\widetilde w,\widetilde w]\,\mathrm ds,\\
|\F(c^*+\xi)-f(z)|
&\le C\norm{\widetilde w}_2^2
\le C\norm{\mathcal M(\xi)}_2^2.
\end{aligned}
\]
Recall that $f(0)=\F(c^*)$ and $1/2\le a<1$. Combining this estimate with \eqref{eq:finite} and \eqref{eq:reducedgradient}, and shrinking the neighborhood so that $\norm{\mathcal M(\xi)}_2\le1$, yields
\[
\begin{aligned}
|\F(c^*+\xi)-\F(c^*)|^a
&\le|\F(c^*+\xi)-f(z)|^a+|f(z)-f(0)|^a\\
&\le C\norm{\mathcal M(\xi)}_2^{2a}
+C\norm{\Pi_\Kk\mathcal M(\xi_0)}_2\\
&\le C\left(\norm{\mathcal M(\xi)}_2^{2a}
+\norm{\mathcal M(\xi)}_2\right)
\le C\norm{\mathcal M(\xi)}_2,
\end{aligned}
\]
where the last inequality uses $2a\ge1$. Set $\theta=1-a$. All analyticity used above concerns the density variable in $X$.
\end{proof}

\subsection{Whole-trajectory convergence and rates}
\begin{proposition}\label{prop:whole}
The state $c^*$ in Proposition~\ref{prop:subsequence} is the limit of the whole trajectory in $C^m(M)^n$ for every integer $m\geq0$.
\end{proposition}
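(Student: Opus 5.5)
The plan is the standard Łojasiewicz--Simon argument, combined with a first-exit argument. We work in the Hölder neighborhood $\mathcal U$ of Lemma~\ref{lem:LS} and convert the $L^2$ gradient bound into an estimate for the time derivative in a weak norm. Fix $0<\alpha<1$ and let $\theta,C,\sigma$ be as in Lemma~\ref{lem:LS} for $c^*$. Since $\F(c(t))$ is nonincreasing and $\F(c(t_k))\to\F(c^*)$ by Proposition~\ref{prop:subsequence}, we have $\F(c(t))\ge\F(c^*)$ for all $t$. If $\F(c(t_0))=\F(c^*)$ at some finite time, then $\D\equiv0$ afterwards. In that case $c$ is stationary from $t_0$ on, by Lemma~\ref{lem:stationary} and uniqueness, and it equals the subsequential limit $c^*$. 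Hence we may assume $\F(c(t))>\F(c^*)$ for all $t$.

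\textbf{Key differential inequality.} Let $\phi(t)=(\F(c(t))-\F(c^*))^\theta$. While $c(t)\in\mathcal U$, \eqref{eq:dissipation} and \eqref{eq:LS} give
\[
-\phi'(t)=\theta(\F-\F^*)^{\theta-1}\D\ge \frac{\theta}{C}\frac{\D}{\norm{P\mu}_2}.
\]
By \eqref{eq:high}, $c_i(t)\ge\lambda>0$ and $\norm{c_i(t)}_\infty\le\Lambda$ for $t\ge1$. Poincaré's inequality then gives $\norm{P\mu}_2\le C\norm{\nabla\mu}_2\le C\lambda^{-1/2}\D^{1/2}$. Thus $-\phi'\ge c_1\D^{1/2}$. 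Testing \eqref{eq:flow} against $\varphi\in H^1$ gives
\[
\norm{\partial_t c}_{H^{-1}}\le \Lambda^{1/2}\D^{1/2}\le C(-\phi').
\]
Therefore, on any interval $[s,t]$ during which $c$ stays in $\mathcal U$,
\[
\int_s^t\norm{\partial_t c}_{H^{-1}}\,\mathrm d\tau\le C\phi(s).
\]

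\textbf{First-exit argument and upgrade of norms.} The main obstacle is passing from $H^{-1}$ smallness to $C^{0,\alpha}$ smallness, which is needed to remain in $\mathcal U$. We use interpolation with the uniform bounds \eqref{eq:high}, which give $\sup_{t\ge1}\norm{c(t)}_{C^{2}}\le C$. Since $\norm f_{C^{0,\alpha}}\le C\norm f_{H^{-1}}^{\beta}\norm f_{C^{2}}^{1-\beta}$ for some $\beta\in(0,1)$, whenever $\norm{c(t)-c(s)}_{H^{-1}}\le\epsilon$ we get $\norm{c(t)-c(s)}_{C^{0,\alpha}}\le C\epsilon^\beta$. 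This interpolation can be realized through $H^{-1}$--$H^{s}$ interpolation followed by Sobolev embedding.

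Now choose $k$ large enough that $\norm{c(t_k)-c^*}_{C^{0,\alpha}}<\sigma/2$ and $C\bigl(C\phi(t_k)\bigr)^\beta<\sigma/2$. Let $T^*=\sup\{t>t_k: c(\tau)\in\mathcal U\text{ for }\tau\in[t_k,t)\}$. If $T^*<\infty$, the integrated bound gives $\norm{c(T^*)-c(t_k)}_{H^{-1}}\le C\phi(t_k)$. Interpolation then yields $\norm{c(T^*)-c^*}_{C^{0,\alpha}}<\sigma$, which contradicts the definition of $T^*$, since positivity and the masses are preserved automatically. Hence $T^*=\infty$.

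\textbf{Conclusion.} With $T^*=\infty$, $\int_{t_k}^\infty\norm{\partial_t c}_{H^{-1}}<\infty$, so $c(t)$ is Cauchy in $H^{-1}$ and converges there. Its limit must be $c^*$ because $c(t_k)\to c^*$. Finally, convergence in $C^m$ for every $m$ follows by interpolating the $H^{-1}$ convergence against the uniform $C^{m+1}$ bounds in \eqref{eq:high}.
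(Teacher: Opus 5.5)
Your proposal is correct and follows essentially the same route as the paper. Lemma~\ref{lem:LS}, combined with Poincar\'e's inequality and the uniform two-sided bounds, gives $-\frac{\mathrm d}{\mathrm dt}\phi\gtrsim\D^{1/2}\gtrsim\norm{\partial_tc}_{H^{-1}}$; a first-exit argument with interpolation against uniform higher-order bounds keeps the trajectory in $\mathcal U$; and the resulting $H^{-1}$ Cauchy property is upgraded to $C^m$ convergence. The differences are cosmetic: you exit from the $C^{0,\alpha}$ set $\mathcal U$ itself and interpolate against $C^2$ bounds, whereas the paper uses a $C^{2,\alpha}$ ball inside $\mathcal U$ and $H^N$ bounds. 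One small point: in the last step, interpolating in $H^s$ and then applying Sobolev embedding needs a uniform bound slightly stronger than $C^{m+1}$, e.g.\ $C^{m+2}$. That bound is supplied by \eqref{eq:high}; alternatively, Arzel\`a--Ascoli gives the upgrade directly.
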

\begin{proof}
\par\smallskip
\noindent\emph{Step 1. Length in the gradient neighborhood.}
For all sufficiently large times, fix $0<m_0\le c_i(t,x)\le M_0$. Poincar\'e's inequality implies
\begin{align}
\norm{P\mu(c(t))}_2^2
\le C\sum_i\norm{\nabla_g\mu_i(c(t))}_2^2\le \frac{C}{m_0}\sum_i\int_M
c_i(t,x)|\nabla_g\mu_i(c(t))|^2\dV=\frac{C}{m_0}\D(c(t)).\nonumber
\end{align}
Let $E(t)=\F(c(t))-\F(c^*)$. Proposition~\ref{prop:subsequence} identifies the energy limit, so $E(t)\ge0$ and $E(t)\to0$. In the neighborhood of Lemma~\ref{lem:LS}, we get
\begin{equation}\label{eq:LSdiss}
E(t)^{1-\theta}\le C\D(c(t))^{1/2}.
\end{equation}
If $E(t_0)=0$ at a finite time, the energy is constant thereafter. Continuity and the dissipation identity give $\D=0$, so the solution is stationary. Its subsequential limit identifies it with $c^*$. We may therefore assume $E(t)>0$ at every finite time. In the same neighborhood, \eqref{eq:LSdiss} implies $\D>0$ and
\[
-\frac{\mathrm d}{\mathrm dt}E^\theta
=\theta\frac{\D}{E^{1-\theta}}\ge\frac\theta C\D^{1/2}.
\]
Use $H^{-1}(M)=(H^1(M))^*$. Testing the equation against $\varphi\in H^1$, we get
\[
|\langle\partial_tc_i,\varphi\rangle|
\le\left(\int c_i|\nabla_g\mu_i|^2\dV\right)^{1/2}
M_0^{1/2}\norm\varphi_{H^1}.
\]
Consequently, as long as the trajectory remains in the neighborhood on $[T,S]$, we get
\begin{equation}\label{eq:length}
\int_T^S\norm{\partial_tc}_{H^{-1}}\,\mathrm dt
\le C\bigl(E(T)^\theta-E(S)^\theta\bigr)\le CE(T)^\theta.
\end{equation}

\par\smallskip
\noindent\emph{Step 2. The first-exit argument.}
We prove that the trajectory remains in $\mathcal U$ after some time. Choose $s>3+\alpha$ and an integer $N>s$. With $\beta=(N-s)/(N+1)$, by Sobolev embedding and interpolation, we get
\begin{equation}\label{eq:trap}
\norm v_{C^{2,\alpha}}\le C\norm v_{H^s}
\le C\norm v_{H^{-1}}^\beta\norm v_{H^N}^{1-\beta}.
\end{equation}
Indeed, in an orthonormal eigenbasis of $-\Delta_g$, the identity $s=-\beta+(1-\beta)N$ gives
\[
\sum_\ell(1+\lambda_\ell)^s|v_\ell|^2
=\sum_\ell\bigl((1+\lambda_\ell)^{-1}|v_\ell|^2\bigr)^\beta
\bigl((1+\lambda_\ell)^N|v_\ell|^2\bigr)^{1-\beta}.
\]
H\"older's inequality with exponents $1/\beta$ and $1/(1-\beta)$ proves the interpolation estimate (\ref{eq:trap}).

Let $B_N$ bound $\norm{c(t)}_{H^N}$ at large times. Choose $\sigma>0$ so that the closed $C^{2,\alpha}$ ball of radius $\sigma$ about $c^*$ in the fixed-mass class consists of positive densities and lies in $\mathcal U$. Along the subsequence, take $T$ sufficiently large that
\begin{align}\label{ineq:CT<sigma/4}
\norm{c(T)-c^*}_{C^{2,\alpha}}<\sigma/4,
\qquad C(2B_N)^{1-\beta}E(T)^{\beta\theta}<\sigma/4.
\end{align}
The first condition follows from convergence in $C^3$, hence in $C^{2,\alpha}$. We define the first exit time
\[
S=\inf\{t\ge T:\norm{c(t)-c^*}_{C^{2,\alpha}}\ge\sigma\},
\qquad\inf\varnothing=\infty.
\]
By \eqref{ineq:CT<sigma/4} and continuity, $S>T$. Suppose that $S<\infty$. Then, $\norm{c(S)-c^*}_{C^{2,\alpha}}=\sigma$. For $T\le t<S$, the trajectory remains in $\mathcal U$, so \eqref{eq:length} gives
\[
\norm{c(t)-c(T)}_{H^{-1}}
\le\int_T^t\norm{\partial_sc(s)}_{H^{-1}}\,\mathrm ds
\le CE(T)^\theta.
\]
Since $\norm{c(t)-c(T)}_{H^N}\le2B_N$, \eqref{eq:trap} yields
\[
\begin{aligned}
\norm{c(t)-c(T)}_{C^{2,\alpha}}
&\le C\norm{c(t)-c(T)}_{H^{-1}}^\beta
       \norm{c(t)-c(T)}_{H^N}^{1-\beta}\\
&\le C(2B_N)^{1-\beta}E(T)^{\beta\theta}
<\sigma/4.
\end{aligned}
\]
Passing to $t=S$ by continuity and using \eqref{ineq:CT<sigma/4}, we obtain
\[
\norm{c(S)-c^*}_{C^{2,\alpha}}
\le\norm{c(S)-c(T)}_{C^{2,\alpha}}
+\norm{c(T)-c^*}_{C^{2,\alpha}}
<\sigma/2,
\]
a contradiction. Hence $S=\infty$, and the trajectory remains in $\mathcal U$ for all $t\ge T$.

\par\smallskip
\noindent\emph{Step 3. Convergence of the whole trajectory.}
We may now apply \eqref{eq:length} to every interval $[T',S]$ with $S\ge T'\ge T$. Since $E(T')\to0$,
\[
\sup_{S\ge T'}\norm{c(S)-c(T')}_{H^{-1}}\le CE(T')^\theta\longrightarrow0.
\]
Completeness of $H^{-1}$ gives convergence. The known subsequence identifies its limit as $c^*$. For each integer $m\ge0$, choose $s>m+1$ and an integer $N>s$, and set $\beta=(N-s)/(N+1)\in(0,1)$. The two-dimensional Sobolev embedding and interpolation give
\[
\begin{aligned}
\norm{c(t)-c^*}_{C^m}
&\le C\norm{c(t)-c^*}_{H^s}\\
&\le C\norm{c(t)-c^*}_{H^{-1}}^\beta
       \norm{c(t)-c^*}_{H^N}^{1-\beta}\\
&\le C\norm{c(t)-c^*}_{H^{-1}}^\beta
\longrightarrow0,
\end{aligned}
\]
where the last inequality uses the uniform $H^N$ bound and the smoothness of $c^*$. Letting $S\to\infty$ in \eqref{eq:length} also proves finite $H^{-1}$ length.
\end{proof}

\begin{proof}[Proof of Theorem~\ref{thm:main}]
Lemma~\ref{lem:local} constructs the unique maximal positive solution. Propositions~\ref{prop:coercivity} and \ref{prop:Lp} supply the estimates used in Proposition~\ref{prop:regularity} to prove global existence and \eqref{eq:high}. Proposition~\ref{prop:subsequence} gives a positive stationary subsequential limit and Proposition~\ref{prop:whole} proves \eqref{eq:whole}. Lemma~\ref{lem:stationary} gives \eqref{eq:elliptic} with the zero-mean normalization.
\end{proof}

\section{Two complementary results}\label{s:Complementary}

The following theorem supplements Theorem~\ref{thm:main} with quantitative
convergence estimates.

\begin{theorem}\label{thm:rates}
Under the assumptions of Theorem~\ref{thm:main}, let $c(t)$ be the solution and let $c^*=c^\infty$ be its stationary limit. There exists an exponent $\theta\in(0,1/2]$ for which the following estimates hold. Set
\[
 E(t)=\mathcal F_\rho(c(t))-\mathcal F_\rho(c^*),
 \qquad u_i^*=U_i[c^*].
\]
There exist $T\geq0$ and $C,\kappa>0$ such that, for all $t\geq T$, we get
\[
 \|c(t)-c^*\|_{H^{-1}(M)^n}+d_\rho(c(t),c^*)
 \leq C E(t)^\theta.
\]
If $0<\theta<1/2$, then
\begin{equation}\label{eq:rates-algebraic}\begin{aligned}
 &E(t)\leq C(1+t-T)^{-1/(1-2\theta)},\\
 &\|c(t)-c^*\|_{H^{-1}(M)^n}+d_\rho(c(t),c^*)
 \leq C(1+t-T)^{-\theta/(1-2\theta)}.
\end{aligned}\end{equation}
If $\theta=1/2$, then
\begin{equation}\label{eq:rates-exponential}\begin{aligned}
 &E(t)\leq C e^{-\kappa(t-T)},\\
 &\|c(t)-c^*\|_{H^{-1}(M)^n}+d_\rho(c(t),c^*)
 \leq C e^{-\kappa(t-T)/2}.
\end{aligned}\end{equation}
Moreover, for every integer $m\geq0$, every $\alpha\in(0,1)$ and every $b\in(0,1)$, there exists a constant $C_{m,\alpha,b}$ such that
\begin{equation}\label{eq:rates-high}\begin{split}
 \|c(t)-c^*\|_{C^{m,\alpha}(M)^n}
 &+\sum_i\|U_i[c(t)]-u_i^*\|_{C^{m+2,\alpha}(M)}
 \\ &\leq C_{m,\alpha,b}
 \begin{cases}
 (1+t-T)^{-b\theta/(1-2\theta)},&0<\theta<1/2,\\
 e^{-b\kappa(t-T)/2},&\theta=1/2.
 \end{cases}
\end{split}\end{equation}
All constants depend on the fixed equation data, the trajectory and the limiting stationary density. The higher-norm constants also
depend on $m,\alpha,b$.
\end{theorem}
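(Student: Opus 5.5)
We reuse the machinery of Proposition~\ref{prop:whole}. By that proposition and \eqref{eq:high}, there is $T$ such that for $t\ge T$ the trajectory lies in the neighborhood $\mathcal U$ of Lemma~\ref{lem:LS} around $c^*=c^\infty$, with $0<m_0\le c_i\le M_0$. On this range the Łojasiewicz inequality \eqref{eq:LSdiss}, $E^{1-\theta}\le C\D^{1/2}$, holds with the exponent $\theta\in(0,1/2]$ of Lemma~\ref{lem:LS}. We may also assume $E(t)>0$, since otherwise the solution is stationary and every estimate is trivial.

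\textbf{Distance bounds.} Letting $S\to\infty$ in \eqref{eq:length} gives $\|c(t)-c^*\|_{H^{-1}}\le CE(t)^\theta$. For $d_\rho$ we use the Benamou--Brenier / continuity-equation estimate \cite[Section~2.5]{AS}. The $i$th probability density $p_i=c_i/\rho_i$ solves $\partial_tp_i+\Div(p_iv_i)=0$ with velocity $v_i=-\nabla_g\mu_i$, so for $t<S$
\[
 W_2(p_i(t),p_i(S))\le\int_t^S\Bigl(\int p_i|\nabla_g\mu_i|^2\dV\Bigr)^{1/2}\mathrm ds .
\]
Multiplying by $\rho_i^{1/2}$ and summing gives $d_\rho(c(t),c(S))\le C\int_t^S\D^{1/2}\,\mathrm ds\le C'E(t)^\theta$, by the differential inequality $-\frac{\mathrm d}{\mathrm dt}E^\theta\ge(\theta/C)\D^{1/2}$ from Step~1 of Proposition~\ref{prop:whole}. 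The $C^m$ convergence implies weak convergence of measures, and on a compact manifold $W_2$ is lower semicontinuous under weak convergence. Letting $S\to\infty$ therefore gives the first claim.

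\textbf{Energy rates.} From \eqref{eq:dissipation} and \eqref{eq:LSdiss} we get $E'=-\D\le-C^{-2}E^{2(1-\theta)}$. If $\theta=1/2$, this reads $E'\le-\kappa E$, and Gronwall gives exponential decay. If $\theta<1/2$, integrating $\frac{\mathrm d}{\mathrm dt}E^{-(1-2\theta)}\ge(1-2\theta)C^{-2}$ yields $E(t)\le C(1+t-T)^{-1/(1-2\theta)}$. Raising these to the power $\theta$ gives \eqref{eq:rates-algebraic} and \eqref{eq:rates-exponential}; in the exponential case $E^{1/2}$ decays like $e^{-\kappa(t-T)/2}$.

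\textbf{Higher norms.} This is the step needing the most care, because the $H^{-1}$ rate must be transferred to $C^{m,\alpha}$ at the cost of the factor $b<1$. Given $m,\alpha,b$, choose $s>m+1+\alpha$ so that $H^s\hookrightarrow C^{m,\alpha}$. Then choose $N$ large enough that the interpolation exponent $\beta=(N-s)/(N+1)$ from \eqref{eq:trap} satisfies $\beta\ge b$. Applying \eqref{eq:trap} to $v=c(t)-c^*$, and using the uniform $H^N$ bound from \eqref{eq:high} (via $C^{N,\alpha}$ control) together with smoothness of $c^*$, we get
\[
 \|c(t)-c^*\|_{C^{m,\alpha}}\le C\|c(t)-c^*\|_{H^{-1}}^\beta .
\]
Since the $H^{-1}$ norm is bounded, $\|\cdot\|_{H^{-1}}^\beta\le C\|\cdot\|_{H^{-1}}^b$, which gives the stated rate for the densities. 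For the potentials, $U_i[c(t)]-u_i^*=\sum_ja_{ij}\G(c_j(t)-c_j^*)$, so the Schauder estimate \eqref{eq:ell-est} bounds its $C^{m+2,\alpha}$ norm by $C\|c(t)-c^*\|_{C^{m,\alpha}}$, and \eqref{eq:rates-high} follows.
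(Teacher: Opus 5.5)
Your proposal is correct and follows essentially the same route as the paper. Both arguments use the same ingredients in the same order: the gradient inequality \eqref{eq:LSdiss} turned into the energy ODE $E'\le-\kappa E^{2(1-\theta)}$, the $H^{-1}$ length bound \eqref{eq:length}, the continuity-equation estimate $d_\rho(c(t),c(S))\le C\int_t^S\D^{1/2}\,\mathrm ds$, and the $H^{-1}$--$H^N$ interpolation followed by the Schauder estimate for the higher norms. The only cosmetic difference is the passage $S\to\infty$ in $W_2$: you use lower semicontinuity under weak convergence, whereas the paper bounds $d_\rho(c(S),c^*)^2$ explicitly by the $L^1$ distance through a diagonal coupling, and both work.
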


\begin{proof}
\par\smallskip
\noindent\emph{Step 1. Energy and $H^{-1}$ estimates.}
By Proposition~\ref{prop:whole}, the trajectory remains in the neighborhood of Lemma~\ref{lem:LS} for all sufficiently large $t\ge T$. The estimates established in Step~1 of its proof, integrated from $t$ to infinity, give
\[
E(t)^{1-\theta}\le C_0\D(c(t))^{1/2},
\qquad
\int_t^\infty\D(c(r))^{1/2}\,\mathrm dr
\le \frac{C_0}{\theta}E(t)^\theta,
\qquad
\|c(t)-c^*\|_{H^{-1}}\le CE(t)^\theta.
\]
Here the second estimate follows from the differential inequality for $E^\theta$, and the third follows from \eqref{eq:length} and convergence to $c^*$.

Set $\kappa=C_0^{-2}$. The energy identity implies
\[
E'(t)=-\D(c(t))\le-\kappa E(t)^{2(1-\theta)}.
\]
As in Proposition~\ref{prop:whole}, if $E$ reaches zero, the solution equals $c^*$ thereafter. Otherwise, integration gives
\[
E(t)\le
\begin{cases}
\bigl(E(T)^{-q}+q\kappa(t-T)\bigr)^{-1/q},
  &q=1-2\theta>0,\\
E(T)e^{-\kappa(t-T)},&\theta=1/2.
\end{cases}
\]
Together with the preceding $H^{-1}$ estimate, this proves the energy and $H^{-1}$ rates in \eqref{eq:rates-algebraic}--\eqref{eq:rates-exponential}.

\par\smallskip
\noindent\emph{Step 2. Transport distance.}
Put $p_i=c_i/\rho_i$ and $v_i=-\nabla_g\mu_i$, so that $\partial_t p_i+\Div(p_iv_i)=0$. On $[t,s]$, let $X_i(r;t,x)$ be the smooth flow of $v_i$. The characteristic representation for the continuity equation (cf.~\cite[Proposition~2.12]{AS}, whose proof applies in local coordinates on $M$) gives
\[
X_i(s;t,\cdot)_\#(p_i(t)\,dV_g)=p_i(s)\,dV_g.
\]
The induced couplings, the bound of endpoint distance by path length, and Minkowski's inequality therefore yield
\[
\begin{aligned}
d_\rho(c(t),c(s))
\le\int_t^s
\left(\sum_i\int_M c_i(r)|v_i(r)|_g^2\,dV_g\right)^{1/2}
\,\mathrm dr=\int_t^s\D(c(r))^{1/2}\,\mathrm dr.
\end{aligned}
\]
Coupling the common parts of the component measures along the diagonal and their remaining masses arbitrarily gives
\[
d_\rho(c(s),c^*)^2
\le\frac{\operatorname{diam}(M)^2}{2}
\sum_i\|c_i(s)-c_i^*\|_{L^1}\longrightarrow0.
\]
Letting $s\to\infty$ and using Step~1 proves $d_\rho(c(t),c^*)\le CE(t)^\theta$, hence the asserted transport rates.

\par\smallskip
\noindent\emph{Step 3. Higher spatial norms.}
Fix $m\ge0$, $\alpha\in(0,1)$ and $b\in(0,1)$. Choose $s>m+\alpha+1$ and an integer $N>s$ so large that $\beta=(N-s)/(N+1)>b$. Using the same interpolation as in Proposition~\ref{prop:whole}, together with the uniform $H^N$ bounds, gives
\begin{align}
\|c(t)-c^*\|_{C^{m,\alpha}}
\le C\|c(t)-c^*\|_{H^{-1}}^\beta
       \|c(t)-c^*\|_{H^N}^{1-\beta}\le C_{m,\alpha,b}E(t)^{\beta\theta}.
\end{align}
Moreover, \eqref{eq:poisson} and the elliptic Schauder estimate give
\[
\sum_i\|U_i[c(t)]-u_i^*\|_{C^{m+2,\alpha}}
\le C\|c(t)-c^*\|_{C^{m,\alpha}}.
\]
Substituting the energy rates and using $\beta>b$ proves \eqref{eq:rates-high}.
\end{proof}

\begin{remark}
When $0<\theta<1/2$, Theorem~\ref{thm:rates} gives decay in each fixed $C^{m,\alpha}$ norm at every algebraic exponent strictly smaller than $\theta/(1-2\theta)$. When $\theta=1/2$, it gives every exponential exponent strictly smaller than $\kappa/2$. The endpoint rates in these strong norms do not follow merely by sending $N$ to infinity, since the interpolation constants depend on $N$.
\end{remark}

\begin{remark}
If the constrained Hessian $L$ in \eqref{eq:Hessian} has trivial kernel, the proof of Lemma~\ref{lem:LS} gives $\theta=1/2$. Hence the exponential estimates in Theorem~\ref{thm:rates} apply. This is an additional condition on the limiting stationary density; Theorem~\ref{thm:main} also allows a nontrivial kernel.
\end{remark}

In follows, we show that the energy admits no lower bound if $\Lambda_J(\rho)<0$ for some nonempty $J\subset I$.

\begin{proposition}\label{prop:negative}
If $\Lambda_J(\rho)<0$ for some nonempty $J\subseteq I$, then $\F$ is unbounded below on the nonnegative fixed-mass class of finite entropy.
\end{proposition}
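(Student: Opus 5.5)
We construct an explicit concentrating family. Fix $J$ with $\Lambda_J(\rho)<0$, a point $x_0\in M$ and a small parameter $\lambda\to0$. For $i\in J$ we take $c_i^\lambda=\rho_i\varphi_\lambda$, where $\varphi_\lambda$ is a smooth bump of unit mass concentrated in $B_\lambda(x_0)$. The simplest choice is the normalized indicator of $B_\lambda(x_0)$, mollified slightly if smoothness is wanted. For $i\notin J$ we keep $c_i^\lambda=\rho_i$ constant. All components are nonnegative, have the prescribed masses and finite entropy, so $\F(c^\lambda)$ is finite by Lemma~\ref{lem:energy-finite}.

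We first estimate the entropy. For $i\in J$, $\int c_i^\lambda\log c_i^\lambda\dV=\rho_i\log\rho_i+\rho_i\int\varphi_\lambda\log\varphi_\lambda\dV$. Since $\operatorname{vol}_g(B_\lambda)\sim\pi\lambda^2$, this equals $2\rho_i\log(1/\lambda)+O(1)$. The components outside $J$ contribute a constant. Hence
\[
H(c^\lambda)=2\Bigl(\sum_{i\in J}\rho_i\Bigr)\log\frac1\lambda+O(1).
\]
The external-potential term is bounded by $\sum_i\rho_i\|\log(\rho_ih_i)\|_\infty$.

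Next we estimate the interaction. Every term of $W_A$ with $i\notin J$ or $j\notin J$ contains the factor $c_i-\rho_i=0$, so only $i,j\in J$ survive. By the identity used in Lemma~\ref{lem:HLS}, these terms equal $\frac12\sum_{i,j\in J}a_{ij}\rho_i\rho_j\iint\varphi_\lambda(x)G(x,y)\varphi_\lambda(y)$. By \eqref{eq:singularity}, $G=-\frac1{2\pi}\log d_g+O(1)$. For $x,y\in B_\lambda(x_0)$ we have $d_g(x,y)\le2\lambda$, so $-\log d_g(x,y)\ge\log\frac1\lambda-\log2$. This only requires a lower bound, so no precise double integral is needed. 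Since $a_{ij}\ge0$, it follows that
\[
W_A(c^\lambda)\ge\frac1{4\pi}\Bigl(\sum_{i,j\in J}a_{ij}\rho_i\rho_j\Bigr)\log\frac1\lambda-C.
\]

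Combining the two estimates gives
\[
\F(c^\lambda)\le\frac{1}{4\pi}\Bigl(8\pi\sum_{i\in J}\rho_i-\sum_{i,j\in J}a_{ij}\rho_i\rho_j\Bigr)\log\frac1\lambda+C=\frac{\Lambda_J(\rho)}{4\pi}\log\frac1\lambda+C.
\]
This tends to $-\infty$ as $\lambda\to0$. The main point of care is that the $O(1)$ constants must be uniform in $\lambda$. This holds for the entropy because of the two-sided volume bounds $c_V\lambda^2\le\operatorname{vol}_g(B_\lambda)\le C_V\lambda^2$ for small $\lambda$. It holds for the interaction because $R$ is bounded and each $\varphi_\lambda$ is a probability density. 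With these bounds uniform, the argument is complete.
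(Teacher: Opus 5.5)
Your argument is correct, and it takes a more economical route than the paper. The paper rescales a smooth profile $\phi$ in normal coordinates, $c_i^\varepsilon(\exp_p(\varepsilon z))=\rho_iZ_\varepsilon^{-1}\varepsilon^{-2}\phi(z)$. It keeps fixed smooth densities for $i\notin J$ and bounds the mixed interaction terms by $\rho_i\|V_j\|_\infty$. It then computes two-sided asymptotics: the entropy is $2\rho_i\log(1/\varepsilon)+O(1)$, and each interaction term with $i,j\in J$ is $\frac{\rho_i\rho_j}{2\pi}\log(1/\varepsilon)+O(1)$. The interaction asymptotic requires comparing $d_g(\exp_p(\varepsilon z),\exp_p(\varepsilon w))$ with $\varepsilon|z-w|$ and using local integrability of $\log|z-w|$.

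You make two simplifications. First, you take constant densities outside $J$, so the mixed terms vanish identically. Second, you use only one-sided bounds. The lower volume bound gives an upper bound on the entropy of the normalized indicators. The pointwise inequality $d_g(x,y)\le2\lambda$ on the support, together with $a_{ij}\ge0$, gives a lower bound on $W_A$. No integration of the logarithmic kernel is needed.

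What you give up is modest. You obtain only the inequality $\F(c^\lambda)\le\frac{\Lambda_J(\rho)}{4\pi}\log\frac1\lambda+C$ rather than the full expansion. You also rely on the sign condition in \eqref{hyp:A}; this is harmless because it is a standing assumption, whereas the paper's two-sided computation does not use it. Finally, your competitors are indicators, which the statement allows since the class is the nonnegative fixed-mass class of finite entropy. If smooth competitors are wanted, take any mollification supported in $B_{2\lambda}(x_0)$ with $\|\varphi_\lambda\|_\infty\le C\lambda^{-2}$; this changes only the constants, using $d_g\le4\lambda$ on the support.
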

\begin{proof}
Choose normal coordinates at $p\in M$ and a nonnegative $\phi\in C_c^\infty(B_1(0))$ with $\int_{\mathbb R^2}\phi=1$. Let $J_g$ denote the volume density in these coordinates and set
\[
Z_\varepsilon=\int\phi(z)J_g(\varepsilon z)\,\mathrm dz=1+O(\varepsilon^2),
\qquad
c_i^\varepsilon(\exp_p(\varepsilon z))=
\frac{\rho_i}{Z_\varepsilon}\varepsilon^{-2}\phi(z),\quad i\in J.
\]
Extend these functions by zero outside of $B_1(0)$. Their supports lie strictly inside the coordinate ball, so the extensions are smooth and the change of variables shows that their masses are exactly $\rho_i$. Fix smooth nonnegative densities of the prescribed masses for $i\notin J$. Since $J_g(y)=1+O(|y|^2)$ and $\phi\log\phi$ is integrable,
\[
\int c_i^\varepsilon\log c_i^\varepsilon\dV
=2\rho_i\log(1/\varepsilon)+O(1)\mbox{ for } i\in J\mbox{ as }\varepsilon\to0.
\]
The coordinate distance is uniformly comparable to $\varepsilon|z-w|$. Thus
\[
\log d_g(\exp_p(\varepsilon z),\exp_p(\varepsilon w))
=\log\varepsilon+\log|z-w|+E_\varepsilon(z,w),\qquad |E_\varepsilon|\le C
\]
off the diagonal. The probability measures $\mathrm d\nu_\varepsilon=Z_\varepsilon^{-1}\phi(z)J_g(\varepsilon z)\,\mathrm dz$ have uniformly bounded densities supported in $B_1$. Local integrability of $|\log|z-w||$ in two dimensions therefore bounds its double integral uniformly. To estimate the interaction energy, \eqref{eq:singularity} gives
\[
\int(c_i^\varepsilon-\rho_i)\G(c_j^\varepsilon-\rho_j)\dV
=\frac{\rho_i\rho_j}{2\pi}\log(1/\varepsilon)+O(1)\text{ for }i,j\in J\text{ as }\varepsilon\to0.
\]
If $j\notin J$, its fixed density produces a bounded smooth potential $V_j=\G(c_j-\rho_j)$ of zero mean and $|\int(c_i^\varepsilon-\rho_i)V_j\dV|\le\rho_i\norm{V_j}_\infty$. Symmetry handles the reversed mixed terms. The terms with both indices outside $J$ are fixed, and the external-potential energy remains bounded. Consequently,
\[
\F(c^\varepsilon)=\left(2\sum_{i\in J}\rho_i-
\frac1{4\pi}\sum_{i,j\in J}a_{ij}\rho_i\rho_j\right)\log(1/\varepsilon)+O(1)
=\frac{\Lambda_J(\rho)}{4\pi}\log(1/\varepsilon)+O(1)\longrightarrow-\infty
\]
as $\varepsilon\to0$.
\end{proof}

\begin{example}
Let
\[
A=\begin{pmatrix}1&1/10\\1/10&1\end{pmatrix},\qquad
A^{-1}=\frac{100}{99}\begin{pmatrix}1&-1/10\\-1/10&1\end{pmatrix}.
\]
The matrix is symmetric and entrywise nonnegative, but $a^{11}=a^{22}=100/99>0$ and $a^{12}=a^{21}=-10/99<0$, so $(H_2)$ fails. Taking $\rho_1=\rho_2=2\pi$, we obtain
\[
\frac{\Lambda_{\{1\}}(\rho)}{64\pi^2}
=\frac{\Lambda_{\{2\}}(\rho)}{64\pi^2}=\frac3{16}>0,
\qquad
\frac{\Lambda_{\{1,2\}}(\rho)}{64\pi^2}=\frac{29}{80}>0.
\]
Hence Theorem~\ref{thm:main} applies although $(H_2)$ does not hold. For the same matrix, the different mass vector $\rho=8\pi(11/10,2/5)$ gives
\[
\frac{\Lambda_I(\rho)}{64\pi^2}=\frac{21}{500}>0,
\qquad
\frac{\Lambda_{\{1\}}(\rho)}{64\pi^2}=-\frac{11}{100}<0.
\]
This also shows why the full-set inequality alone is insufficient for the general matrix class considered here.
\end{example}

Proposition~\ref{prop:negative} concerns failure of a lower energy bound. It does not by itself prove blow-up of the flow. The cases $\Lambda_J=0$ require a separate critical analysis. A stationary density is a constrained critical point of $\F$. Convergence to it does not assert that it minimizes $\F$.

\section*{Acknowledgements}
The authors are listed in alphabetical order of their surnames, and all authors have made equal contributions. The authors acknowledge the use of AI tools. All mathematical arguments and proofs in the final manuscript were checked and written by the authors.

\Needspace{17\baselineskip}

\end{document}